\documentclass[numbers,sort&compress]{elsarticle}

\usepackage[T1]{fontenc}
\usepackage[utf8]{inputenc}
\usepackage[american]{babel}



\usepackage{subfig}
\usepackage{tikz}
\usetikzlibrary
  {arrows,calc,through,backgrounds,matrix,positioning,decorations.pathmorphing}

\usepackage{amsmath}
\usepackage{amssymb}
\usepackage{amsthm}

\newtheorem{lemma}{Lemma}[section]
\newtheorem{proposition}[lemma]{Proposition}
\newtheorem{theorem}[lemma]{Theorem}
\newtheorem{definition}[lemma]{Definition}
\newtheorem{remark}[lemma]{Remark}



\newcommand{\Z}{\mathbb{Z}}
\newcommand{\R}{\mathbb{R}}
\newcommand{\C}{\mathbb{C}}
\newcommand{\N}{\mathbb{N}}
\newcommand{\e}{\varepsilon}
\newcommand{\eff}{\varepsilon^\text{eff}}

\newcommand{\curl}{\mathrm{curl}}

\renewcommand{\vec}[1]{\boldsymbol{#1}}
\newcommand{\vH}{\vec H}
\newcommand{\vE}{\vec E}
\newcommand{\vF}{\vec F}
\newcommand{\vJ}{\vec J}
\newcommand{\vJa}{\vec J_{a}}
\newcommand{\vPsi}{\vec \Psi}
\newcommand{\vPhi}{\vec \Phi}
\newcommand{\vJSd}{\vec J_{\Sigma^d}}
\newcommand{\vME}{\vec{\mathcal{E}}}
\newcommand{\vMH}{\vec{\mathcal{H}}}
\newcommand{\vn}{\vec \nu}

\newcommand{\vx}{\vec x}
\newcommand{\vy}{\vec y}

\newcommand{\ve}{\vec e}
\newcommand{\vu}{\vec u}
\newcommand{\vv}{\vec v}
\newcommand{\vf}{\vec f}
\newcommand{\vc}{\vec c}
\newcommand{\vk}{{\vec k}}
\newcommand{\vg}{{\vec g}}

\newcommand{\pa}{\partial}
\newcommand{\na}{\nabla}

\newcommand{\HT}{H_T(\curl;\Omega)}

\newcommand{\dx} {\,{\mathrm d}x}
\newcommand{\dox}{\,{\mathrm d}o_x}
\newcommand{\dy} {\,{\mathrm d}y}
\newcommand{\doy}{\,{\mathrm d}o_y}
\newcommand{\dt} {\,{\mathrm d}t}

\newcommand{\jso}[1]{\left[#1\right]_{\Sigma_0}}
\newcommand{\jsd}[1]{\left[#1\right]_{\Sigma^d}}

\renewcommand{\Re}{\mathrm {Re}\,}
\renewcommand{\Im}{\mathrm {Im}\,}

\def\wSigma{\widetilde\Sigma}
\def\wOmega{\widetilde\Omega}
\def\wE{\widetilde{\vE}}
\def\wF{\widetilde{\vF}}


\begin{document}

\title{%
  Homogenization of time-harmonic Maxwell's equations in nonhomogeneous %
  plasmonic structures}

\author[1]{Matthias Maier}
\ead{maier@math.tamu.edu}
\address[1]{%
  Department of Mathematics, Texas A\&M University,
  3368 TAMU, College Station, TX 77843, USA}

\author[2]{Dionisios Margetis}
\ead{dio@math.umd.edu}
\address[2]{%
  Department of Mathematics, and Institute for
  Physical Science and Technology, and Center for Scientific Computation
  and Mathematical Modeling, University of Maryland,
  College Park, Maryland 20742, USA}

\author[3]{Antoine Mellet}
\ead{mellet@math.umd.edu}
\address[3]{%
  Department of Mathematics, and Center for
  Scientific Computation and Mathematical Modeling, University of
  Maryland,
  College Park, Maryland 20742, USA}

\begin{abstract}
  We carry out the homogenization of time-harmonic Maxwell's equations in a
  periodic, layered structure made of two-dimensional (2D) metallic sheets
  immersed in a heterogeneous and in principle anisotropic dielectric
  medium. In this setting, the tangential magnetic field exhibits a
  \emph{jump} across each sheet. Our goal is the rigorous derivation of the
  effective dielectric permittivity of the system from the solution of a
  local cell problem via suitable averages. Each sheet has a fine-scale,
  inhomogeneous and possibly anisotropic surface conductivity that scales
  linearly with the microstructure scale, $d$. Starting with the weak
  formulation of the requisite boundary value problem, we prove the
  convergence of its solution to a homogenization limit as $d$
  approaches zero. The effective permittivity and cell problem express a
  bulk average from the host dielectric and a \emph{surface average}
  germane to the 2D material (metallic layer). We discuss implications of
  this analysis in the modeling of plasmonic crystals.
\end{abstract}

\begin{keyword}
  Homogenization, Two-scale convergence, Time-harmonic Maxwell's
  equations, Layered structures, Jump condition on hypersurface;
  AMS Subject Classification: 35B27, 35Q60, 74Q10, 78A40
\end{keyword}

\maketitle


\section{Introduction}
\label{sec:introduction}

Recent advances in the design and synthesis of thin materials have
challenged traditional notions of optics such as the diffraction limit. The
emerging class of metamaterials enable the control of the path and
dispersion of light, which may in turn result in unusual optical phenomena
that include no refraction (``epsilon near zero'' effect) and negative
refraction~\cite{maier2019,maier2018,mattheakis2016,LiKita2015,moitra2013,Silveirinha2006,Wang2012}.
In fact, the optical conductivity of certain two-dimensional (2D) materials
in the infrared spectrum permits the excitation of short-scale
electromagnetic surface waves, called surface plasmon-polaritons, in the
electron plasma under the appropriate polarization of the incident
field~\cite{Lowetal17,Bludov13,grigorenko2012,Pitarke07}. This type of wave
is tightly confined near the 2D material. The existence of this wave has
inspired the design of layered plasmonic structures that exhibit
unconventional optical properties via the tuning of frequency or
geometry~\cite{grigorenko2012,mattheakis2016,maier2018,maier2019}. This physical
prospect motivates our present work.

In a layered structure of 2D metallic sheets, the surface
plasmon-polaritons excited in the electron plasma of the layers may
constructively interfere in the dielectric host. This wave coupling can be
enhanced for small enough interlayer spacing at the microscale; and can
give rise to a slowly varying wave that propagates through the structure at
the macroscale. By a suitable adjustment of the operating frequency or
interlayer spacing, this wave may experience no phase
delay~\cite{mattheakis2016,maier2018}. Mathematically, it is tempting to
view this possibility as an outcome of homogenization, expecting that there
is an \emph{effective description} of wave propagation as the spacing
approaches zero. The phase of the optical conductivity of each sheet plays
a key role.

In this paper, we rigorously carry out the homogenization of a boundary
value problem for the time harmonic Maxwell equations in a periodic,
layered structure. The geometry consists of 2D plasmonic sheets in a
heterogeneous dielectric medium. The surface conductivity, $\sigma^d$, of
each sheet varies spatially with the microstructure scale $d$ and may be
anisotropic. The dielectric permittivity, $\e^d$, of the host medium has an
analogous, $d$-periodic microstructure in the ambient space and can be
anisotropic. Our main result is the rigorous extraction of an
\emph{effective} dielectric permittivity and the related cell problem as
$d\to 0$. Specifically, we complete the following main tasks.

\begin{itemize}
  \item
    We develop the weak formulation for the associated boundary value
    problem of Maxwell's equations for the  electromagnetic field $(\vE^d,
    \vH^d)$ in some generality. The tangential vector component, $\vH^d_T$,
    of the magnetic field obeys a \emph{jump condition} on each sheet; and
    the jump is proportional to $\sigma^d \vE^d_T$, the current induced on
    the sheet.  We make the assumption that $\sigma^d$ scales linearly with
    $d$, which is consistent with the experimentally observed fine-scale
    surface plasmon-polaritons.
  \item
    We address the simplified case with planar sheets, and scalar $\e^d$
    and $\sigma^d$  first. In this vein, we prove a theorem
    (Theorem~\ref{thm:existence}) asserting that for fixed $d$, the weak
    formulation admits a unique solution in an appropriate function space.
  \item
    We then show that the electromagnetic field ($\vE^d, \vH^d$) converges
    weakly in $L^2$ to the solution  ($\vME,\vMH$) of the homogenized
    problem (Theorem~\ref{thm:hom}). The homogenization limit reveals
    the effective permittivity, $\eff$, via a suitable average and the
    solution of a local cell problem; cf.~\eqref{eq:effectiveperm}. To
    obtain these results, we establish requisite a priori estimates in the
    context of two-scale convergence. For an overview of important results
    related to two-scale convergence, see
    \ref{app:two-scale}.
  \item
    We discuss the relevance of our model and analysis to the application
    area of plasmonics, especially the design of plasmonic crystals that
    exhibit no refraction (epsilon-near-zero effect).
  \item
    We point out extensions of our analysis to more general settings. In
    particular, our analysis can treat tensorial parameters $\varepsilon^d$
    and $\sigma^d$, and non-planar sheets (see
    \ref{app:generalization}).
\end{itemize}

In our analysis, for the sake of mathematical convenience we assume that
the bulk material surrounding the metallic sheets is slightly lossy. This
assumption, which is not uncommon in electromagnetics~\cite{Muller69},
amounts to the addition of a small, positive imaginary part to the
dielectric permittivity $\e^d$ (under an $e^{-i \omega t}$ time
dependence). Consequently, we conveniently obtain the desired a priori
estimates for $(\vE^d,\vH^d)$.

There is extensive literature in the theory of periodic homogenization that
is akin to our approach; see, e.g.,~\cite{allaire, nguetseng, papanicolaou,
cioranescu, sanchezpalencia, wellander2001, wellander2002, wellander2003,
amirat2011, amirat2017, artola}. Notably, the idea underlying the two-scale
asymptotic analysis for ($\vE^d, \vH^d$) can be found
in~\cite{papanicolaou,pavliotis}; and our proof of homogenization relies on
the known notion of two-scale convergence~\cite{allaire,nguetseng}. In the
setting of time-harmonic Maxwell's equations, our analysis brings forth the
feature of averaging on hypersurfaces (metallic sheets) across which the
magnetic field undergoes a jump involving the surface conductivity,
$\sigma^d$. A similar jump condition is considered in~\cite{amirat2017},
albeit in a different geometric setting which is motivated by geophysical
applications: In~\cite{amirat2017} the jump condition accounts for
interfacial currents that are present along the {\em closed surfaces} that
separate two distinct phases of a composite material (with periodic
structure). In our setting, on the other hand, surface currents on {\em
large 2D sheets} are discussed. This suggests a different approach to
handle the contribution of these currents in our proof than the one
employed in~\cite{amirat2017}. We refer to Sections~\ref{subsec:past}
and~\ref{sec:conclusion} for further discussion and a comparison of the two
problems and respective approaches.

We focus on the rigorous analysis of the periodic homogenization for
plasmonic layered structures. Hence, numerical computations tailored to
applications lie beyond our present scope, and will be the subject of
future work. We assume that the reader is familiar with the fundamentals of
classical electromagnetic wave theory; for extensive treatments of this
subject, see, e.g.,~\cite{Muller69,Schwartz72}. The $e^{-i\omega t}$ time
dependence is employed throughout.

\subsection{Problem formulation}
\label{subsec:formulation}

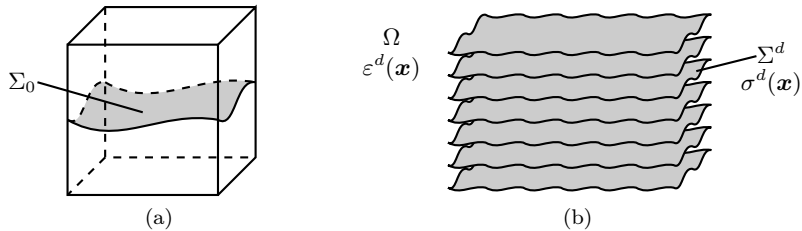
\begin{figure}[tb]
  \centering
    \subfloat[]{
      \hspace{-3em}
      \begin{tikzpicture}[scale=1.00]
        \path [thick, draw]
          (-1, -1) -- (1, -1) -- (1, 1) -- (-1, 1) -- cycle;
        \path [thick, draw] (1,-1) -- (1.5, -0.5) -- (1.5, 1.5) -- (1, 1);
        \path [thick, draw] (-1,1) -- (-0.5, 1.5) -- (1.5, 1.5);
        \path [thick, draw, dashed]
          (-0.5, 1.5) -- (-0.5, -0.5) -- (1.5, -0.5);
        \path [thick, draw, dashed] (-0.5, -0.5) -- (-1.0, -1.0);
        \path [fill=black, fill opacity=0.2]
          (-1, 0) to[out=-30,in=170] (1, 0) to[out=-30,in=170] (1.5, 0.5)
          to[out=170,in=-30] (-0.5, 0.5) to[out=170,in=-30] (-1.0, 0.0);

        \path [thick, draw] (-1, 0) to[out=-30,in=170] (1, 0);
        \path [thick, draw] (1, 0) to[out=-30,in=170] (1.5, 0.5);
        \path [thick, draw, dashed] (-0.5, 0.5) to[out=-30,in=170] (1.5, 0.5);
        \path [thick, draw, dashed] (-1.0, 0.0) to[out=-30,in=170] (-0.5, 0.5);
        \path [thick, draw] (-1.4, 0.5) -- (0.0, 0.1);
        \node at (-1.6, 0.5) {\small $\Sigma_0$};
      \end{tikzpicture}}
    \hspace{3em}
    \subfloat[]{
      \begin{tikzpicture}[scale=1.00]
        \foreach \x in {0,...,6} {
          \path [fill=white]
            (-1.5, -1+0.3*\x) to[out=-30,in=170] (-1.0, -1+0.3*\x)
                               to[out=-30,in=170] (-0.5, -1+0.3*\x)
                               to[out=-30,in=170]  (0.0, -1+0.3*\x)
                               to[out=-30,in=170]  (0.5, -1+0.3*\x)
                               to[out=-30,in=170]  (1.0, -1+0.3*\x)
                               to[out=-30,in=170]  (1.5, -1+0.3*\x)
                               to[out=-30,in=170] (1.75, -0.75+0.3*\x)
                               to[out=-30,in=170] (2.00, -0.5+0.3*\x)
                               to[out=170,in=-30] (1.50, -0.5+0.3*\x)
                               to[out=170,in=-30] (1.00, -0.5+0.3*\x)
                               to[out=170,in=-30] (0.50, -0.5+0.3*\x)
                               to[out=170,in=-30] (0.00, -0.5+0.3*\x)
                               to[out=170,in=-30] (-0.5, -0.5+0.3*\x)
                               to[out=170,in=-30] (-1.0, -0.5+0.3*\x)
                               to[out=170,in=-30] (-1.25, -0.75+0.3*\x)
                               to[out=170,in=-30] cycle;
          \path [thick, draw, fill=black, fill opacity=0.2]
            (-1.5, -1+0.3*\x) to[out=-30,in=170] (-1.0, -1+0.3*\x)
                               to[out=-30,in=170] (-0.5, -1+0.3*\x)
                               to[out=-30,in=170]  (0.0, -1+0.3*\x)
                               to[out=-30,in=170]  (0.5, -1+0.3*\x)
                               to[out=-30,in=170]  (1.0, -1+0.3*\x)
                               to[out=-30,in=170]  (1.5, -1+0.3*\x)
                               to[out=-30,in=170] (1.75, -0.75+0.3*\x)
                               to[out=-30,in=170] (2.00, -0.5+0.3*\x)
                               to[out=170,in=-30] (1.50, -0.5+0.3*\x)
                               to[out=170,in=-30] (1.00, -0.5+0.3*\x)
                               to[out=170,in=-30] (0.50, -0.5+0.3*\x)
                               to[out=170,in=-30] (0.00, -0.5+0.3*\x)
                               to[out=170,in=-30] (-0.5, -0.5+0.3*\x)
                               to[out=170,in=-30] (-1.0, -0.5+0.3*\x)
                               to[out=170,in=-30] (-1.25, -0.75+0.3*\x)
                               to[out=170,in=-30] cycle;
        }
        \node at (-2.25, +1.0) {\small $\Omega$};
        \node at (-2.25, +0.6) {\small $\varepsilon^d(\vx)$};
        \node at (2.8, +0.8) {\small $\Sigma^d$};
        \path [thick, draw] (2.6, 0.75) -- (1.8, 0.55);
        \node at (2.8,  0.4) {\small $\sigma^d(\vx)$};
      \end{tikzpicture}}

  \caption{Geometry of the problem.
    (a) The unit cell, $Y=[0,1]^3$, with hypersurface $\Sigma_0$, (b) a
    layered structure consisting of parallel, conducting sheets $\Sigma^d$
    equipped with a spatially dependent surface conductivity
    $\sigma^d(\vx)$. We assume that the layered structure is immersed in a
    (unbounded) medium with a spatially dependent permittivity
    $\e^d(\vx)$.}
  \label{fig:layered}
\end{figure}
Our goal with this work is to extract \emph{effective} material parameters
in time-harmonic Maxwell's equations for layered systems of stacked,
metallic sheets immersed in a non-homogeneous medium.

For ease of discussion, we will introduce the homogenization problem in
this section for the (infinite) domain $\mathbb{R}^3$, assuming suitable
boundary conditions. The actual proof assumes a more restrictive
``reference configuration'' which is described in
Section~\ref{sec:rigorous_result}.
The geometry is shown in Figure~\ref{fig:layered}. The (complex) surface
conductivity $\sigma^d$ of every sheet, which  is in principle frequency
($\omega$-) dependent, has real and imaginary parts that can be tuned to
allow for the propagation of surface plasmon-polaritons on the isolated
sheet~\cite{maier2017,maier2019}.
\medskip

The \emph{scaling parameter} $d$, $d\ll 1$, describes the fine scale of the
problem and in particular the distance separating the conducting sheets: Let
$Y=[0,1]^3$ denote the unit cell and let $\Sigma_0$ denote a smooth
hypersurface in $Y$ (with smooth, periodic continuation); see
Figure~\ref{fig:layered}a. We then define a union of stacked, disconnected
hypersurfaces by
\begin{align*}
  \Sigma^d = \bigcup_{\vec z\in\Z^n} d\,(\vec z+\Sigma_0);
\end{align*}
see Figure~\ref{fig:layered}b. For all $\vx\in\Sigma^d$, the surface
conductivity $\sigma^d(\vx)$ is a tensor acting on the tangent space
$T_{\vx} \Sigma^d$. We assume that $\sigma^d(\vx)$ exhibits both fine-scale
(periodic) and large-scale variations in space. We also assume that
$\sigma^d\sim d$ so that the total conductivity remains finite when
$d\ll1$. (This scaling is also consistent with the fact that $|\sigma^d|$
must be small enough for the appearance of a fine-scale surface
plasmon-polariton on an isolated sheet~\cite{maier2017,maier2019}). We thus
write
\begin{align*}
  \sigma^d(\vx) = d\,\sigma\big(\vx,\vx/d\big),
\end{align*}
where $\sigma(\vx,\vy)$ is a tensor acting on the tangent space $T_{\vy}
\Sigma_0$ for all $(\vx,\vy)\in \R^3\times \Sigma_0$ and is independent of
$d$ and periodic with respect to $\vy$. Similarly, the permittivity of the
ambient medium $\e^d(\vx)$, defined for $\vx\in \R^3\setminus \Sigma^d$, is
assumed to be given by $$  \e^d(\vx) = \e\big(\vx,\vx/d\big)$$ for some
tensor $\e(\vx,\vy)$ independent of $d$. The quantities $\e(\vx,\vy)$ and
$\sigma(\vx,\vy)$ will be henceforth referred to as the rescaled
permittivity and surface conductivity, respectively. Additional conditions
on the domain, geometry and material parameters are provided below.

We now consider time-harmonic Maxwell's equations written in the form
\begin{align}
  \begin{cases}
    \begin{aligned}
      \nabla\times\vE^d &= i\omega\mu\vH^d,
      \\[0.3em]
      \nabla\times\vH^d &= -i\omega\e^d\vE^d+\vJa.
    \end{aligned}
  \end{cases}
  \label{eq:maxwell}
\end{align}
Here, $\vE^d$ and $\vH^d$ denote the electric and magnetic field,
respectively; and $\vJa$ is the (externally applied) source current
density. The parameter $\mu$ denotes the magnetic permeability of the
ambient space; e.g., $\mu=\mu_0$, a scalar constant, for the vacuum.
\medskip

In order to write the boundary conditions across $\Sigma^d$, we introduce
the normal unit vector field $\vn(\vx)$ on $\Sigma^d$, and let
$\jsd{\,.\,}$ denote the jump over $\Sigma^d$:
\begin{align*}
  \jsd{\vec F}(\vx)
  \,:=\,
  \lim_{\alpha\searrow0}
  \Big(\vec F(\vx+\alpha\vn) - \vec F(\vx-\alpha\vn)\Big)\qquad
  \vx\in\Sigma^d.
\end{align*}
We also denote by $\vF_T$ the tangential component of any vector field $\vF$,
viz., $\vF_T=(\vn\times\vF)\times\vn$. The current density induced on the
sheets because of the effect of $\sigma^d$ is
$\vJSd=\vec\delta_{\Sigma^d}\sigma^d\vE^d_T$. Hence, the boundary
conditions for the tangential components of the electromagnetic field
across $\Sigma^d$ read~\cite{maier2017,maier2019}
\begin{align}
  \begin{cases}
    \begin{aligned}
      \jsd{\vn\times\vE^d} &= 0,
      \\[0.3em]
      \jsd{\vn\times\vH^d} &= \sigma^d\vE^d_T.
    \end{aligned}
  \end{cases}
  \label{eq:jump}
\end{align}

\subsection{Main result}
\label{subsec:mainresult}

We will show that as $d\to 0$, the electric and magnetic fields, $\vE^d$
and $\vH^d$, converge to the solutions, $\vME$ and $\vMH$, of the
homogenized system
\begin{align}
  \begin{cases}
    \begin{aligned}
      \nabla\times\vME &= i\omega\mu\vMH,
      \\[0.3em]
      \nabla\times\vMH &= -i\omega\eff\vME+\vJa,
    \end{aligned}
  \end{cases}
  \label{eq:maxwellhomogenized}
\end{align}
where the \emph{effective permittivity} $\eff=\eff(\vx)$ is given as an
appropriate average involving the $d$-independent (rescaled) permittivity
and conductivity $\e$ and $\sigma$:
\begin{multline}
  \eff(\vx) :=
  \int_Y\e(\vx,\vy)(I_3+\nabla_y\vec\chi(\vx,\vy))\dy
  \\
  -\frac1{i\omega}
  \int_{\Sigma_0}\big(\sigma(\vx,\vy)P_T
  (I_3+\nabla_y\vec\chi(\vx,\vy))\big)\doy.
  \label{eq:effectiveperm}
\end{multline}
In the above, $I_3$ denotes the identity matrix in $\R^3$,  $P_T$ is the
projection matrix onto the tangent set of $\Sigma_0$ and  $\nabla_y
\vec\chi(\vx,\vy)$ denotes the matrix $[\pa_{y_i} \chi_j(\vx,\vy)]$ ($i,
j=1,2,3$). The corrector $y\mapsto \vec\chi(\vx,\vy)$ solves the following
\emph{cell problem} (for all $\vx$ in some bounded open set $\Omega\subset
\mathbb{R}^3$):
\begin{align}\label{eq:cell-intro}
  \begin{cases}
    \begin{aligned}
      \nabla_y\cdot\Big(\e(\vx,\vy)\big(I_3+\nabla_y\vec\chi(\vx,\vy)\big)\Big)
      &= 0,& &\mbox{in } Y
      \\[0.3em]
      \jso{\vn\cdot\Big(\e(\vx,\vy)\big(I_3+\nabla_y\vec\chi(\vx,\vy)\big)}
      &=
      \\[0.3em]
      \frac{1}{i\omega}\nabla_y\cdot&\Big(\sigma(\vx,\vy) P_T
      \big(I_3+\nabla_y\vec\chi(\vx,\vy)\big)\Big)& &\mbox{on } \Sigma_0.
    \end{aligned}
  \end{cases}
\end{align}
As mentioned in the introduction, this cell problem is similar to
equation~(34) in~\cite{amirat2017} for the case (scaling regime) of
``strong interface layer''.
Note that even when $\e$ and $\sigma$ are scalars, the effective
permittivity $\eff$ is a $3\times 3$ matrix as typically expected for the
case of a bulk material, that is, if $\sigma\equiv
0$~\cite{kristensson}.
%
%
Under the assumptions of Theorem \ref{thm:hom} (see \eqref{eq:hyp3}), we
have
\begin{align*}
  \Im \big( \eff(\vx) \vec \xi\cdot \overline {\vec \xi} \big) \geq
  c\left(1+\frac1 \omega\right) |\vec\xi|^2,
\end{align*}
which ensures the well-posedness of homogenized system
\eqref{eq:maxwellhomogenized}.

We alert the reader that the proofs in this paper are only developed in the
scalar case, i.e., when the material parameters $\varepsilon^d$ and
$\sigma^d$ are scalars, for the sake of simplicity. On the other hand, the
above statements are written more generally, for tensorial $\varepsilon^d$
and $\sigma^d$. Our analysis can be extended to the tensor case without
difficulties.

\subsection{Novelty and application}
\label{subsec:novel}

The insertion of an array of metallic sheets, each of which can sustain
surface plasmon-polaritons, into dielectric hosts with small enough
interlayer spacing has significant physical
appeal~\cite{moitra2013,mattheakis2016,maier2018}. From an analysis
perspective, this type of structure motivates the homogenization procedure
of this paper, and leads to an intriguing homogenization result;
cf.~Section~\ref{subsec:mainresult}.

Foremost, effective permittivity \eqref{eq:effectiveperm} is now the
\emph{combination} of two averages, namely, one average stemming from the
ambient-medium permittivity tensor $\e(\vx,\vy)$, and another from the
surface conductivity $\sigma(\vx,\vy)$ of each metallic sheet. To our
knowledge, this combination of two effective parameters, one coming from a
bulk property and another expressing the property of a hypersurface, has
not occurred in most of the previous homogenization results; see,
e.g.,~\cite{allaire, wellander2003}. An exception is the homogenization of
Maxwell's equations in a two-phase composite material carried out
in~\cite{amirat2017}, which we mentioned above; see also the discussion in
Sections~\ref{subsec:past}, and~\ref{sec:conclusion}.

For applications in plasmonics, the case with a surface conductivity,
$\sigma^d$, that has a dominant imaginary part, viz.,
$\text{Im}\,\sigma^d\gg\text{Re}\,\sigma^d>0$ in the case of a scalar
$\sigma^d$, has attracted particular attention. Such a surface conductivity
can be created with novel 2D materials, for example graphene
\cite{grigorenko2012}. By carefully tuning the frequency, geometry, or
the surface conductivity, $\sigma$, via doping of the 2D material, one may
obtain $\eff$ with eigenvalues that have vanishing, or negative real part.
The homogenized system described by \eqref{eq:effectiveperm} can thus be
viewed as a \emph{metamaterial} exhibiting highly unusual optical phenomena
such as the epsilon-near-zero effect or negative refraction
\cite{moitra2013,mattheakis2016,maier2018}. This implication and the
connection of our homogenization  result to existing predictions of
epsilon-near-zero behavior are discussed in
Section~\ref{sec:conclusion}.

\subsection{On past works}
\label{subsec:past}

Our analysis relies on firm concepts of homogenization
theory~(\cite{allaire, nguetseng, papanicolaou, cioranescu,
sanchezpalencia}), which we employ in the setting of electromagnetic wave
propagation in the presence of 2D plasmonic materials. Over the past
decade, numerous studies have been conducted on
related applications, especially because of the prospect of fabricating
metamaterials with unusual properties in nanophotonics. These properties
come from combining and averaging out suitable microstructures. For recent
reviews from an applied physics perspective, we refer the reader to
\cite{Caldwell2016,Jahani2016,Zheludev2016}.

From the viewpoint of analysis, we should highlight a number of
homogenization results~\cite{wellander2001, wellander2002, wellander2003,
amirat2011, amirat2017, artola} that are relevant to our problem
formulation, as well as the germane notion of two-scale
convergence~\cite{allaire, nguetseng} which underlies our approach. In
particular, the homogenization results obtained in~\cite{wellander2001,
wellander2002, wellander2003, amirat2011,artola}  coincide with
\eqref{eq:effectiveperm} in the special case with a vanishing surface
conductivity, $\sigma(\vx, \vy)=0$.

As mentioned above, our work is related to that in~\cite{amirat2017} which
also analyzes the effect of surface currents in the homogenization of the
time harmonic Maxwell equations. In~\cite{amirat2017}, however, the authors
focus on a different geometric and physical setting, which involves
two-phase materials with certain inclusions. Various regimes, which depend
on the strength of the interfacial currents, the wavelength and the skin
depth, are studied in~\cite{amirat2017}. Both the mathematical formulation
and the homogenized equations that we derive in the present paper are
closely related to the case referred to as the {\it strong interface layer}
in~\cite{amirat2017} (see Theorem 2 in \cite{amirat2017}).

However, there are key differences in the geometric setting of the two
problems. In~\cite{amirat2017} the authors consider a material (e.g., clay)
containing periodically distributed rock inclusions; thus, the
hypersurfaces are boundaries of small disconnected sets rather than the
large 2D sheets studied here. Most notably, in~\cite{amirat2017}, a key
tool in \cite{amirat2017} is the generalization of the notion of two-scale
convergence to functions defined on periodic surfaces \cite{Neuss-Radu,
Allaire95}, which exploits the geometric setting of small inclusions in a
crucial way. The proof that we develop in the present paper, however, is
directly motivated by our geometry consisting of large 2D interfaces and
does not rely on this notion of two-scale convergence on surfaces but
instead recovers directly the convergence of the currents to the
appropriate term in the sense of distribution; cf. Proposition
\ref{prop:limsum}. We point out that in principle our results can be
extended to the geometry of closed inclusions considerd in
\cite{amirat2017}, and vice versa. In that sense we offer an independent
proof of the homogenization result.

We should also mention a number of related results for periodic media that
are obtained by use of the Bloch wave theory; see, e.g.,
\cite{sjoeberg2005} where the authors conclude that only a few Bloch waves
effectively contribute to the macroscopic field. In a similar physical
context, the application of homogenization to finite photonic crystals is
described in \cite{guenneau}; and its connection to certain numerical
multiscale methods for Maxwell's equations in composite materials  is
elaborated in~\cite{cao, henning}. In particular, in~\cite{guenneau} the
authors formally apply a two-scale  asymptotic expansion to derive
effective bulk parameters that take into account the crystal boundary. To
these works we add the homogenization of Maxwell's equations in the
presence of rough boundaries and interfaces pioneered in~\cite{Nevard97}.

\subsection{Outline}
\label{subsec:outline}

The remainder of the paper is organized as follows. In
Section~\ref{sec:rigorous_result}, we introduce the weak formulation of the
problem, along with two key theorems (Theorems~\ref{thm:existence}
and~\ref{thm:hom}) which permeate our analysis. Section~\ref{sec:existence}
focuses on the proof of one of these theorems
(Theorem~\ref{thm:existence}), namely, the existence of a weak solution to
Maxwell's equations for a finite microstructure scale, $d>0$. In
Section~\ref{sec:convergence}, we prove the second key theorem
(Theorem~\ref{thm:hom}) which establishes the convergence of the weak
solution for $d>0$ to the homogenization limit as $d\to 0$ and
recovers the cell problem. Finally, in Section~\ref{sec:conclusion} we conclude
our work with a discussion of its relevance to the design of plasmonic
crystals with unusual optical properties. In Section~\ref{sec:conclusion},
an outline of related open problems is given as well.
\ref{app:two-scale} provides an overview of important results in two-scale
convergence. \ref{app:generalization} contains details of the extension of
our analysis to more general hypersurfaces for the metallic sheets. At the
risk of redundancy, we repeat that our analysis in this paper focuses on
the scalar case. The extension of our proofs to tensorial parameters does
not present any difficulties, and is not pursued here.


\section{Homogenization of layered structures}
\label{sec:rigorous_result}

In this section, we introduce a weak formulation of \eqref{eq:maxwell} and
\eqref{eq:jump}, and state the main theorems of this paper. At this stage,
we will analyze a special choice of geometry (``reference configuration''),
in which the hypersurfaces are flat. (We outline an extension of our proof
to curved hypersurfaces in \ref{app:generalization}). Specifically, let
$\Omega$ be a bounded open set in $\R^3$ of the form
$\Omega=\Sigma_0\times\Gamma$, where $\Sigma_0$ is a (bounded) subset of
$\R^2$ and $\Gamma=(-L,L)$. The layered structure is then described by
\begin{align*}
  \Sigma^d =  \bigcup_{k\in \Gamma^d} \Sigma_0\times \{kd\},
\end{align*}
where $\Gamma^d = \{k\in \Z\,;\, kd\in (-L,L-d)\}$.
\begin{remark}
  We point out that our homogenization result holds for a larger class of
  geometries. In fact, all theorems and proofs that are presented in this
  section can be readily extended to cases of periodic structures that are
  diffeomorphic to the above reference configuration (as depicted in
  Figure~\ref{fig:layered}). We outline an extension of our proof to such
  curved hypersurfaces in \ref{app:generalization}.
\end{remark}

We will use the notation $\vx=(\vx',x_3)\in\Sigma\times\Gamma$ when
necessary. For later convenience, we also define $\tilde\Gamma^d =
\bigcup_{k\in \Gamma^d}\big[kd,(k+1)d\big]$. Hence, we have
\begin{equation}
  \label{eq:Gamma}
  (-L+d,L-d)\subset \tilde\Gamma^d\subset (-L,L).
\end{equation}

\subsection{Weak formulation}
\label{subsec:weak-form}

Formally, our problem is equivalent to the system
\begin{align}
  \begin{cases}
    \begin{aligned}
      \nabla\times\vE^d &= i\omega\mu\vH^d,
      \\[0.5em]
      \nabla\times\vH^d &= -i\omega\e^d\vE^d+\vJ^d,
    \end{aligned}
  \end{cases}
  \qquad\text{in }\Omega
\label{eq:strong}
\end{align}
where
\begin{equation}
  \label{eq:Jd}
  \vJ^d =  (\sigma^d \vE^d_T)\,\delta_{\Sigma^d} + \vJa.
\end{equation}
Note that \eqref{eq:strong} implies the following Helmholtz-type equation
for the electric field
\begin{equation}
  \label{eq:helm}
  \frac{1}{\mu} \nabla\times(\nabla\times\vE^d) = \omega^2 \e^d \vE^d +
  i\omega\vJ^d,
\end{equation}
along with the relation $\vH^d = \frac{1}{i\omega\mu}
\nabla\times\vE^d$. On the boundary of $\Omega$, we supplement this
equation with the following impedance boundary condition \cite{Monk03}:
\begin{align}
  \frac{1}{\mu}(\nabla\times
  \vE^d)\times\nu = i\omega\lambda \vE^d_T \qquad \mbox{ on }\partial\Omega.
\label{eq:BC}
\end{align}
Recall that $\nu$ denotes the normal unit vector. Boundary condition
\eqref{eq:BC} is often used for scattering configurations: For the
particular choice $\lambda=\sqrt{\mu^{-1}\e}$, formula \eqref{eq:BC}
recovers a first-order absorbing boundary condition \cite{Monk03}.

By multiplying~\eqref{eq:helm} by the conjugate of a smooth test
function $\vPsi$, we obtain the following formulation for Maxwell's
equations in domain $\Omega$:
\begin{align*}
  & \int_{\Omega  } \frac 1 {\mu} (\nabla\times \vE^d )\cdot
  (\nabla\times \bar \vPsi) \, \dx +\int_{\partial \Omega}  \frac 1 {\mu}
  \nu\times (\nabla\times \vE^d) \cdot \bar \vPsi_T\, \dx
  \\
  & \qquad \qquad = \int_{\Omega } \omega^2 \e^d \vE^d \cdot \bar \vPsi\,
  \dx +  i\omega\int_\Omega \vJ^d\cdot \vPsi \, \dx.
\end{align*}
By substituting expression~\eqref{eq:Jd} for current $\vJ^d$ and using 
boundary condition~\eqref{eq:BC} on $\partial\Omega$, we thus obtain the
following weak formulation:
\begin{multline}
  \label{eq:weak}
  \int_{\Omega  } \frac 1 {\mu} (\nabla\times \vE^d)\cdot (\nabla\times
  \overline \vPsi) \, \dx -\int_{\partial \Omega}  i\omega\lambda  \vE^d_T
  \cdot  \overline\vPsi_T\, \dox
  \\
  = \int_{\Omega } \omega^2 \e^d  \vE^d \cdot  \overline \vPsi\dx +
  \int_{\Sigma^d} i\omega \sigma^d \vE^d_T\cdot \overline \vPsi_T\, \dox +
  \int_{\Omega } i\omega \vJa \cdot  \overline\vPsi\, \dx.
\end{multline}
\begin{remark}
  In order to obtain the desired a priori estimates below, we will need to
  assume that the bulk material (ambient medium) is dissipative. This
  property can be ensured by addition of the current density $\sigma_0
  \vE^d$ in the bulk material, where $\sigma_0>0$. Alternatively, this
  current ensues by addition of the term $i\frac{\sigma_0}{\omega}$ to the
  permittivity $\e^d$. This is the reason why $\e^d$ will be complex
  valued, with strictly positive imaginary part, in this section (see also
  conditions \eqref{eq:hyp1}-\eqref{eq:hyp1'} and Remark~\ref{rem:matrix}).
\end{remark}

\subsection{Function spaces}
\label{subsec:function-sp}

The space $H(\curl;\Omega)$ denotes the set of complex valued vector
functions $\vu\in L^2(\Omega;\C^3)$ such that $\nabla\times \vu \in
L^2(\Omega;\C^3)$. Given a hypersurface $\Lambda \subset \overline \Omega$,
the  trace $\nu\times \vu|_\Lambda$ (where $\nu$ is the unit normal vector
to $\Lambda$) is well defined for functions in $H(\curl;\Omega)$ and it
belongs to $H^{-\frac 1 2} (\mathrm{div}_\Lambda,\Lambda)$. We denote by $
\vu_T = (\nu\times \vu)\times\nu$ the tangential component of $\vu$. In
view of \eqref{eq:weak}, the function space that is natural for our problem
is
\begin{align*}
  X^d =\big\{ \vu \in H(\curl;\Omega) \,;\, \vu_T\in L^2(\partial\Omega;\C^3),
  \; \vu_T \in L^2(\Sigma^d;\C^3)\big\},
\end{align*}
equipped with the norm
\begin{align*}
  \|\vu\|_{X^d}^2  = \| \vu\|^2_{L^2(\Omega)}+  \|\nabla\times
  \vu\|^2_{L^2(\Omega)} + \| \vu_T\|^2_{L^2(\partial \Omega)} + \|
  \vu_T\|^2_{L^2(\Sigma^d)},
\end{align*}
where
\begin{align}\label{eq:Sigmanorm}
  \|\vu_T\|^2_{L^2(\Sigma^d)}
  = d \int_{\Sigma^d }|\vu_T|^2\, \dox = \sum_{k\in \Gamma^d}
  d\int_{\Sigma} |\vu_T(x',kd)|^2\, \dx'.
\end{align}
Note the presence of the factor $d$ in this last norm. This is a natural
definition since with this scaling the $L^2(\Sigma^d)$ norm is a Riemann
sum approximation of the $L^2(\Omega)$ norm when $d\ll 1$. With these
prerequisites at hand we can introduce a precise problem formulation:
Equation \eqref{eq:weak} is equivalent to
\begin{equation}
  \label{eq:weaka}
  \vE^d \in X^d,
  \qquad
  a^d(\vE^d,\vPsi) = i\omega \int_{\Omega } \vJa \cdot  \overline\vPsi\,
  \dx  \qquad \forall\, \vPsi \in X^d,
\end{equation}
where the sesquilinear form $a^d:X^d\times X^d\to \C$ is defined by
\begin{align*}
  a^d(\vu,\vv) &=
  \int_{\Omega  } \frac 1 {\mu} (\nabla\times \vu)\cdot (\nabla\times
  \overline \vv) \, \dx - \int_{\Omega } \omega^2 \e^d  \vu \cdot
  \overline \vv\, \dx\\ &\qquad\qquad -\int_{\Sigma^d} i\omega \sigma^d
  \vu_T\cdot \overline \vv_T\, \dox -\int_{\partial \Omega} i\omega\lambda
  \vu_T \cdot  \overline\vv_T\, \dox.
\end{align*}
We will show that \eqref{eq:weaka} uniquely determines the electric field
$\vE^d$ (see~Theorem \ref{thm:existence}). The corresponding magnetic field
is then given by
\begin{equation}
  \label{eq:mag}
  \vH^d (\vx)= \frac{1}{i\omega\mu} \nabla\times \vE^d(\vx).
\end{equation}

\subsection{Main theorems}
\label{subsec:theorems}
We are now ready to state our main results. Throughout the paper, we will
make the following assumption on the properties of the material:
\medskip

\noindent
{\bf Assumption:} {\it We assume (for simplicity) that $\mu$ and $\lambda$
are positive, real, scalar constants and that $\e^d$ and $\sigma^d$ are
measurable complex valued functions  satisfying
\begin{equation}
  \label{eq:hyp1}
  0 < c \leq  \Im \e^d (\vx ) \leq C
  \quad
  \text{and}
  \quad
  |\Re \e^d(\vx )|\leq C \qquad \forall \vx\in \Omega
\end{equation}
and
\begin{equation}
  \label{eq:hyp1'}
  0 < c \leq  \frac{1}{d}\,\Re \sigma^d (\vx )\leq C
  \quad
  \text{and}
  \quad
   \Big|\frac{1}{d}\,\Im \sigma^d(\vx )\Big|\leq C \qquad \forall \vx \in \Sigma^d,
\end{equation}
for some constants  $c$ and $C$.
}

\medskip

Our first result concerns the existence of a solution for the problem~\eqref{eq:weaka}  when  $d>0$:
\begin{theorem}
  \label{thm:existence}
Under the assumption above, system \eqref{eq:weaka} has a unique solution in $X^d$ for all
  $\vJa \in L^2(\Omega;\C^3)$ and for all $d>0$.
\end{theorem}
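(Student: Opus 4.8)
The plan is to prove existence and uniqueness for the variational problem \eqref{eq:weaka} via a Fredholm alternative argument, splitting the sesquilinear form $a^d$ into a coercive part and a compact perturbation. First I would show that $a^d$ is bounded on $X^d\times X^d$: each of the four terms is controlled by $\|\vu\|_{X^d}\|\vv\|_{X^d}$ directly from the definition of the norm (the $\Sigma^d$-term uses exactly the factor $d$ built into \eqref{eq:Sigmanorm}, and the boundedness of $\sigma^d/d$ from \eqref{eq:hyp1'}; the boundary term uses $\vu_T\in L^2(\partial\Omega)$; the bulk terms use \eqref{eq:hyp1}). Then I would write $a^d = b^d + k^d$ where $b^d(\vu,\vv)$ collects the curl-curl term, a $+\|\vu\|_{L^2(\Omega)}^2$-type term, and the two tangential surface/boundary terms, while $k^d$ collects the remaining zeroth-order bulk term $-\int_\Omega(\omega^2\e^d+\text{const})\vu\cdot\overline\vv$.

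The key point is coercivity of $b^d$. Here the dissipativity assumptions do the work: taking $\vv=\vu$, the imaginary parts of $-\int_{\Sigma^d} i\omega\sigma^d|\vu_T|^2$ and $-\int_\Omega i\omega(i\sigma_0/\omega)|\vu|^2$-type contributions have a definite sign (from $\Re\sigma^d\ge cd>0$ and $\Im\e^d\ge c>0$), while the real part contributes $\int_\Omega\mu^{-1}|\nabla\times\vu|^2$ plus the positive boundary and the $\Im\sigma^d$ terms. Combining $|a^d(\vu,\vu)|\ge \frac{1}{\sqrt2}(|\Re a^d(\vu,\vu)|+|\Im a^d(\vu,\vu)|)$ and using \eqref{eq:hyp1}--\eqref{eq:hyp1'}, one gets
\begin{align*}
  \Re e^{i\theta} a^d(\vu,\vu) \;\ge\; c\Big(\|\nabla\times\vu\|_{L^2(\Omega)}^2 + \|\vu\|_{L^2(\Omega)}^2 + \|\vu_T\|_{L^2(\Sigma^d)}^2 + \|\vu_T\|_{L^2(\partial\Omega)}^2\Big)
\end{align*}
for a suitable fixed rotation $\theta$, i.e.\ $a^d$ itself (after adding the lower-order bulk term back, which is the compact perturbation) already controls the full $X^d$-norm. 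Actually the cleaner route: show $\Im a^d(\vu,\vu) \ge c\|\vu\|_{L^2(\Omega)}^2 + c\|\vu_T\|_{L^2(\Sigma^d)}^2$ and $\Re a^d(\vu,\vu) + C\|\vu\|_{L^2(\Omega)}^2 \ge \|\nabla\times\vu\|_{L^2(\Omega)}^2$, then a standard combination gives a G\aa rding-type inequality $|a^d(\vu,\vu)| \ge c\|\vu\|_{X^d}^2 - C\|\vu\|_{L^2(\Omega)}^2$.

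The main obstacle is the compactness step needed to upgrade the G\aa rding inequality to well-posedness: one needs that the embedding of $X^d$ into $L^2(\Omega;\C^3)$ is compact, so that the lower-order bulk term defines a compact operator and the Fredholm alternative applies. This is delicate because $H(\curl;\Omega)$ is not compactly embedded in $L^2$ in general — the subspace of gradients is an obstruction. I would handle this with a Helmholtz-type decomposition $\vu = \vw + \nabla p$ adapted to the boundary condition, where $\vw$ lies in a space with both curl and (a weak form of) divergence control (hence compactly embedded by standard vector-field compactness results, e.g.\ Weber/Picard), and the gradient part $\nabla p$ is controlled because the zeroth-order term $-\int_\Omega(\omega^2\e^d)\nabla p\cdot\overline{\nabla q}$ together with $\Im\e^d\ge c>0$ makes the problem restricted to gradients coercive. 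Once compactness is in place, injectivity (uniqueness) follows from the same sign arguments: if $a^d(\vE^d,\vPsi)=0$ for all $\vPsi$, testing with $\vPsi=\vE^d$ and taking imaginary parts forces $\vE^d=0$ in $\Omega$ and $\vE^d_T=0$ on $\Sigma^d$, then the real part and curl-curl term force $\vE^d\equiv0$. Existence then follows from Fredholm, and the magnetic field is recovered by \eqref{eq:mag}. I expect the $d$-dependence to be irrelevant for this theorem (all constants may depend on $d$), since uniformity in $d$ is the business of Theorem~\ref{thm:hom}.
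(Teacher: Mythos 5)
Your opening observation is the correct one, and it is essentially the mechanism of the paper's proof: since $\Im \e^d\ge c>0$, $\tfrac1d\Re\sigma^d\ge c>0$ and $\lambda>0$, every zeroth-order term of $a^d(\vu,\vu)$ is sign-definite in the imaginary part, while the curl term sits in the real part. (Note a sign slip: with the paper's convention $\Im a^d(\vu,\vu)\le 0$, so what holds is $-\Im a^d(\vu,\vu)\ge c\big(\|\vu\|^2_{L^2(\Omega)}+\|\vu_T\|^2_{L^2(\Sigma^d)}+\|\vu_T\|^2_{L^2(\pa\Omega)}\big)$.) Taking $\gamma=1+iK$ with $K$ large compared with the ratio of the constants in \eqref{eq:hyp1}--\eqref{eq:hyp1'}, one gets
\begin{equation*}
  \Re\big(\gamma\, a^d(\vu,\vu)\big)=\Re a^d(\vu,\vu)-K\,\Im a^d(\vu,\vu)\;\ge\; c'\,\|\vu\|_{X^d}^2 ,
\end{equation*}
so Lax--Milgram applied to $\gamma a^d$ (with right-hand side $\gamma$ times the original functional) gives existence and uniqueness for \eqref{eq:weaka} in one stroke; the factor $d$ built into \eqref{eq:Sigmanorm} is exactly what makes the $\Sigma^d$ terms fit. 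The paper implements the same energy structure by a slightly different device: it replaces $\mu$ by $\mu+i\eta$ so that $\Re\big(i a^d\big)$ becomes coercive on $X^d$, applies Lax--Milgram for each $\eta>0$, derives $\eta$-uniform bounds from the real and imaginary parts of the energy identity, and passes to the limit $\eta\to0$; uniqueness is again the imaginary-part identity. Either device works, and neither needs Fredholm theory.

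Where your proposal has a genuine gap is the pivot to the ``cleaner route.'' First, the G\aa rding inequality you state, $|a^d(\vu,\vu)|\ge c\|\vu\|^2_{X^d}-C\|\vu\|^2_{L^2(\Omega)}$, does not follow from the unweighted combination of real and imaginary parts: the $\Im\sigma^d$ contribution on $\Sigma^d$ hidden in $\Re a^d$ need not be dominated by the $\Re\sigma^d$ contribution in $-\Im a^d$ with equal weights, so you are forced to weight the imaginary part by a large constant --- at which point you already have full coercivity and the Fredholm detour becomes moot. Second, and more seriously, the compactness step your Fredholm argument rests on is not available as sketched: $X^d$ is not compactly embedded in $L^2(\Omega;\C^3)$ (as you note), and the proposed Helmholtz splitting $\vu=\vw+\na p$ does not repair this here. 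The splitting does not respect $X^d$: the $L^2(\Sigma^d)$ trace of $\vu_T$, which enters both the norm and the sesquilinear form, is not inherited by $\vw_T$ and $(\na p)_T$ separately, so the interface term cannot be distributed over the decomposition; moreover the Weber/Picard compact-embedding results you invoke are formulated for real, uniformly positive-definite coefficients, whereas $\e^d$ is complex, merely measurable, and may have vanishing or negative real part, so a decomposition adapted to $\e^d$ would need separate justification. None of this machinery is needed precisely because the problem is dissipative: the zeroth-order term is not sign-indefinite (its imaginary part is uniformly positive), which is what distinguishes this setting from the classical non-dissipative Maxwell problem where a Fredholm argument is unavoidable. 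Your closing points --- uniqueness from the imaginary part, recovery of $\vH^d$ via \eqref{eq:mag}, and that all constants may depend on $d$ --- are correct and consistent with the paper.
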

The proof of Theorem~\ref{thm:existence} is presented in
Section~\ref{sec:existence}. Next, we state our main convergence result for
the homogenization limit that was formally discussed in
Section~\ref{subsec:mainresult}.
\begin{theorem}
  \label{thm:hom}
  Under the  assumptions of Theorem \ref{thm:existence}, we also assume
  that
  \begin{align*}
    \e^d(\vx) = \e(\vx,\vx/d),
    \quad\text{and}\quad
    \sigma^d(\vx) =d\,\sigma(\vx,\vx/d),
  \end{align*}
  where $\e(\vx,\vy)$ and $\sigma(\vx,\vy)$ are complex-valued scalar
  functions, periodic with respect to $\vy\in Y$, and $\sigma$ is Lipschitz
  continuous:
 \begin{equation}
       \label{eq:hyp4}
 \;|\na_x \sigma(\vx,\vy)|,\;|\na_y \sigma(\vx,\vy)|\leq C
    \qquad\forall\,(\vx,\vy) \in \Omega\times Y.
\end{equation}
  Then, for all $\vJ_a\in L^2(\Omega;\C^3)$, the electric field
  $\vE^d(\vx)$ of \eqref{eq:weaka} and the corresponding magnetic field
  $\vH^d (\vx)$ defined by \eqref{eq:mag} converge weakly in
  $L^2(\Omega;\C^3)$ to functions $\vME(\vx)$ and $\vMH(\vx)$
  satisfying
  the following weak form of the homogenized system \eqref{eq:maxwellhomogenized}:
  \begin{equation}
    \label{eq:weaka0}
    \vME \in X^0,
    \qquad
    a^0(\vME,\vPsi) = \int_{\Omega } i\omega \vJa \cdot
    \overline\vPsi\, \dx  \qquad \forall\, \vPsi \in X^0,
  \end{equation}
  and\vspace{-1em}
  \begin{align*}
    \vMH (\vx)= \frac{1}{i\omega\mu} \nabla\times \vME(\vx).
  \end{align*}
  Here, the sesquilinear form $a^0:X^0\times X^0\to \C$ is given by
  \begin{align*}
    a^0(\vu,\vv) &=
    \int_{\Omega  } \frac 1 {\mu} (\nabla\times \vu)\cdot (\nabla\times
    \overline \vv) \, \dx - \int_{\Omega } \omega^2 \eff  \vu \cdot
    \overline \vv\, \dx  -\int_{\partial \Omega} i\omega\lambda
    \vu_T \cdot  \overline\vv_T\, \dox,
  \end{align*}
  and the space $X^0$ is defined by
  \begin{align*}
    X^0 =\big\{ \vu \in H(\curl;\Omega) \,;\, \vu_T\in
    L^2(\partial\Omega;\C^3) \big\}.
  \end{align*}
  The effective permittivity $ \vec \eff$ is given by
  \eqref{eq:effectiveperm} (with $P_T=
  \text{diag}\,\big(1,1,0\big)\in\R^{3\times3}$).
\end{theorem}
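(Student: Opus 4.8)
\emph{A priori estimates and compactness.} The overall strategy is the standard two-scale homogenization argument, modified to cope with the oscillating hypersurfaces $\Sigma^d$ and the surface currents they carry. I would first take $\vPsi=\vE^d$ in \eqref{eq:weaka} and separate real and imaginary parts. Taking the imaginary part and using the dissipativity hypotheses \eqref{eq:hyp1}--\eqref{eq:hyp1'} ($\Im\e^d\ge c$, $\tfrac1d\Re\sigma^d\ge c$) together with the scaled norm \eqref{eq:Sigmanorm} yields
\[
  \omega^2 c\,\|\vE^d\|_{L^2(\Omega)}^2+\omega c\,\|\vE^d_T\|_{L^2(\Sigma^d)}^2+\omega\lambda\,\|\vE^d_T\|_{L^2(\partial\Omega)}^2\le \omega\,\|\vJa\|_{L^2(\Omega)}\,\|\vE^d\|_{L^2(\Omega)},
\]
which bounds $\|\vE^d\|_{L^2(\Omega)}$, the tangential trace on $\partial\Omega$, and the scaled tangential trace on $\Sigma^d$ uniformly in $d$; taking the real part and using $|\Re\e^d|\le C$, $|\tfrac1d\Im\sigma^d|\le C$ then bounds $\|\na\times\vE^d\|_{L^2(\Omega)}$, hence $\|\vH^d\|_{L^2(\Omega)}$. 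Consequently, along a subsequence, $\vE^d\rightharpoonup\vME$ and $\vH^d\rightharpoonup\vMH$ weakly in $L^2(\Omega;\C^3)$ with $\vMH=\tfrac1{i\omega\mu}\na\times\vME$, and $\vE^d$ two-scale converges to some $\vE_0(\vx,\vy)\in L^2(\Omega\times Y;\C^3)$.

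\emph{Two-scale structure and the cell problem.} Since $\na\times\vE^d$ is bounded in $L^2(\Omega)$, the compactness result for curl-constrained sequences (see~\ref{app:two-scale}) gives $\na_y\times\vE_0=0$, hence $\vE_0(\vx,\vy)=\vME(\vx)+\na_y\phi(\vx,\vy)$ for some $\phi$ that is $Y$-periodic and $H^1$ in $\vy$. To identify $\phi$ I would test \eqref{eq:weaka} with the curl-free oscillating functions $\vPsi^d(\vx)=\na_x\big(d\,\psi(\vx,\vx/d)\big)$, where $\psi$ is smooth, $Y$-periodic in the second variable, and compactly supported in $\Omega$: then $\na\times\vPsi^d=0$, and $\vPsi^d\rightharpoonup0$ in $L^2(\Omega)$ while two-scale converging to $\na_y\psi$, so only the bulk-permittivity and surface-current terms survive. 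Ordinary two-scale convergence in the bulk and Proposition~\ref{prop:limsum} for the $\Sigma^d$ integral yield, for a.e.\ $\vx\in\Omega$ and every periodic $\psi$,
\[
  \int_Y\e(\vx,\vy)\,\vE_0\cdot\overline{\na_y\psi}\dy-\frac1{i\omega}\int_{\Sigma_0}\sigma(\vx,\vy)\,P_T\vE_0\cdot\overline{\na_y\psi}\doy=0,
\]
which is precisely the weak form of cell problem \eqref{eq:cell-intro} with data $\vME(\vx)$. That problem is well posed by the Lax--Milgram lemma on the space of $Y$-periodic $H^1$ functions modulo constants, again by dissipativity; by linearity $\phi(\vx,\vy)=\vec\chi(\vx,\vy)\cdot\vME(\vx)$, i.e.\ $\vE_0=(I_3+\na_y\vec\chi)\vME$. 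The Lipschitz bound \eqref{eq:hyp4} ensures that $\vx\mapsto\vec\chi(\vx,\cdot)$ depends continuously on $\vx$, so that $\eff$ in \eqref{eq:effectiveperm} is well defined and the two-scale limits below are legitimate.

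\emph{Passage to the limit, boundary term, uniqueness.} I would then pass to the limit in \eqref{eq:weaka} against a fixed smooth $\vPsi$. The curl term converges by weak $L^2$ convergence of $\na\times\vE^d$; the term $-\omega^2\int_\Omega\e^d\vE^d\cdot\overline{\vPsi}$ converges by two-scale convergence to $-\omega^2\int_\Omega\big(\int_Y\e(I_3+\na_y\vec\chi)\dy\big)\vME\cdot\overline{\vPsi}$; and the surface term $-i\omega\int_{\Sigma^d}\sigma^d\vE^d_T\cdot\overline{\vPsi}_T$, which by $\sigma^d=d\sigma$ is a $d$-weighted layer sum, converges via Proposition~\ref{prop:limsum} to $-i\omega\int_\Omega\big(\int_{\Sigma_0}\sigma P_T(I_3+\na_y\vec\chi)\doy\big)\vME\cdot\overline{\vPsi}$. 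Since $\tfrac{i}{\omega}=-\tfrac1{i\omega}$, the last two contributions combine into exactly $-\omega^2\int_\Omega\eff\vME\cdot\overline{\vPsi}$, with $\eff$ as in \eqref{eq:effectiveperm}. For the impedance term, $\vE^d\rightharpoonup\vME$ in $H(\curl;\Omega)$ forces weak convergence of the tangential traces in $H^{-1/2}(\partial\Omega)$, which together with the uniform $L^2(\partial\Omega)$ bound gives $\vME_T\in L^2(\partial\Omega)$ (hence $\vME\in X^0$) and $\vE^d_T\rightharpoonup\vME_T$ in $L^2(\partial\Omega)$. This establishes \eqref{eq:weaka0} for all smooth $\vPsi$, hence for all $\vPsi\in X^0$ by density. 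Finally, $a^0$ is Fredholm (coercive up to a compact perturbation on $H(\curl;\Omega)$) and injective: taking the imaginary part of $a^0(\vu,\vu)=0$ and using $\Im(\eff\vec\xi\cdot\overline{\vec\xi})\ge c(1+\tfrac1\omega)|\vec\xi|^2$ (proved by testing the cell problem with its corrector) forces $\vu=0$. Thus \eqref{eq:weaka0} has a unique solution, and the whole sequence $(\vE^d,\vH^d)$ converges, not merely a subsequence.

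\emph{Main obstacle.} The delicate step is the passage to the limit in the surface-current integrals $\int_{\Sigma^d}\sigma^d\vE^d_T\cdot\overline{\vPsi}_T\dox$, encapsulated in Proposition~\ref{prop:limsum}. In contrast to \cite{amirat2017}, which relies on two-scale convergence for functions on periodic surfaces (a tool tailored to small closed inclusions), here the sheets are large flat layers and one must recover the limit of the $d$-weighted layer sums directly, in the sense of distributions. The difficulty is that $\vE^d$ is controlled only in $H(\curl;\Omega)$: there is no uniform $H^1$ bound and hence no easy trace compactness on the moving surfaces $\Sigma^d$, so identifying the trace of the two-scale limit on $\Sigma_0$ as $P_T(I_3+\na_y\vec\chi)\vME$ requires exploiting the structure $\vE_0=\vME+\na_y\phi$ and the transversal regularity of $\phi$ (equivalently, of the corrector). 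The remaining steps are careful but routine bookkeeping of two-scale limits and sign conventions.
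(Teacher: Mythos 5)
Your outline reproduces the paper's overall strategy (a priori estimate by testing with $\vE^d$ and splitting real/imaginary parts, two-scale compactness and the decomposition $\vE^{(0)}=\vME+\na_y\varphi$, oscillating gradient test functions to extract the cell problem, weak convergence of the tangential traces on $\partial\Omega$, recombination of bulk and surface averages into $\eff$, and uniqueness of the homogenized problem to upgrade subsequence convergence to the whole sequence). However, there is a genuine gap: you invoke Proposition~\ref{prop:limsum} — the convergence of the $d$-weighted surface integrals $d\int_{\Sigma^d}\vE^d_T\cdot\vF_T(\vx,\vx/d)\dox$ to $\int_\Omega\int_{\Sigma_0}(\vME+\na_y\varphi)_T\cdot\vF_T\doy\dx$ — as a black box, and that proposition \emph{is} the technical core of the theorem; everything else is, as you say yourself, routine bookkeeping. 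Two-scale convergence of $\vE^d$ together with the $L^2$ bound on $\na\times\vE^d$ does not control traces on the family of measure-zero planes $\Sigma^d$, so the limit of these layer sums cannot be identified by "exploiting the structure $\vE^{(0)}=\vME+\na_y\varphi$ and the transversal regularity of $\varphi$": the transversal regularity $(\na_y\varphi)_T\in L^2(\Omega\times\Sigma_0)$ that you appeal to is itself a \emph{conclusion} of Proposition~\ref{prop:limsum} in the paper, not an available hypothesis, so your sketch is circular at precisely the delicate point. The paper closes this gap by introducing the translated layer averages $\alpha^d(t)=\sum_k d\int_\Sigma \vE^d_T(\vx',(k+t)d)\cdot\vF_T(\vx',(k+t)d;\vx'/d,0)\dx'$, proving (i) an $L^2(0,1)$ bound and identification of the weak limit of $\alpha^d$ via ordinary two-scale convergence in the bulk (Lemma~\ref{lem:alpha1}), and (ii) a uniform bound on $\tfrac{d}{dt}\alpha^d$ obtained by writing $\pa_{x_3}\vE^d_T$ as a combination of components of $\na\times\vE^d$ and tangential derivatives of $\vE^d_3$ and integrating by parts along the flat sheets (Lemma~\ref{lem:alpha2}); the resulting $C^{1/2}$ equicontinuity makes the convergence uniform, so that evaluation at $t=0$ yields the claimed limit and, as a by-product, the $L^2(\Omega\times\Sigma_0)$ bound on $(\na_y\varphi)_T$. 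Some argument of this type (or the surface two-scale convergence machinery of \cite{Neuss-Radu,Allaire95} used in \cite{amirat2017}) is indispensable and is missing from your proposal.

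A secondary, non-fatal inaccuracy: your claim that $a^0$ is "coercive up to a compact perturbation on $H(\curl;\Omega)$" is not correct, since $H(\curl;\Omega)$ does not embed compactly in $L^2(\Omega)$, so no Fredholm alternative is available in that space without further (divergence-type) constraints. Fortunately it is also unnecessary: existence of a solution to \eqref{eq:weaka0} is furnished by the limit itself, and uniqueness — which is all that is needed to conclude whole-sequence convergence — follows from the imaginary part of $a^0(\vu,\vu)=0$ together with $\Im(\eff\vec\xi\cdot\overline{\vec\xi})\geq c|\vec\xi|^2$, exactly as in the paper's Theorem~\ref{thm:existence0}, which establishes well-posedness by the same $\eta$-regularized Lax--Milgram device used for fixed $d>0$.
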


Note that $\sigma(\vx,\vy)$ is only really defined for $\vy\in \Sigma_0$,
but in our simple geometry we can treat it as a function defined for all
$\vy\in Y$ which is independent of $\vy_3$. In particular the gradient
$\na_y\sigma$  in \eqref{eq:hyp4} is a tangential gradient. In the
framework of this theorem, conditions \eqref{eq:hyp1}-\eqref{eq:hyp1'} are
equivalent to
\begin{equation}
    \label{eq:hyp3}
    c \leq \Im \e(\vx,\vy),\; \Re \sigma (\vx,\vy)\leq C,\quad    |\e(\vx,\vy)|,\;|\sigma(\vx,\vy)| \leq C
    \quad\forall\,(\vx,\vy) \in \Omega\times Y .
\end{equation}
The proof of Theorem~\ref{thm:hom} is deferred to
Section~\ref{sec:convergence}.
\begin{remark}\label{rem:matrix}
  Theorems~\ref{thm:existence} and \ref{thm:hom} also hold when $\mu$,
  $\e^d$ and $\sigma^d$ are matrix valued, provided all the respective
  assumptions are replaced with appropriate matrix inequalities. For
  example, the first conditions in \eqref{eq:hyp1}-\eqref{eq:hyp1'} should be replaced by
  \begin{align*}
     c |\xi|^2\leq  \Im ( \e^d \xi \cdot \overline \xi)  \leq C |\xi|^2 ,
     \qquad
     c |\xi|^2\leq \frac 1 d\Re ( \sigma^d \xi \cdot \overline \xi) \leq C |\xi|^2,
     \qquad \forall \xi\in\R^3.
  \end{align*}
\end{remark}

As a last step in this section, we establish the well-posedness of
\eqref{eq:weaka0}.

\begin{theorem}
  \label{thm:existence0}
  Under the assumptions of Theorem \ref{thm:hom}, the homogenized problem
  \eqref{eq:weaka0} has a unique solution  $\vME(\vx)$ in $X^0$ for all
  $\vJa \in L^2(\Omega;\C^3)$.
\end{theorem}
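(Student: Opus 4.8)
The plan is to establish well-posedness of \eqref{eq:weaka0} by the same Fredholm-alternative strategy used for Theorem~\ref{thm:existence}, since $a^0$ differs from $a^d$ only in that the bulk permittivity $\e^d$ is replaced by the effective permittivity $\eff$, the surface term on $\Sigma^d$ has disappeared, and the space $X^0$ has dropped the $L^2(\Sigma^d)$ trace constraint. First I would record the two structural facts we need about $\eff$: from the discussion following \eqref{eq:effectiveperm} (the displayed inequality under \eqref{eq:hyp3}) we have the coercivity-type bound $\Im(\eff(\vx)\vec\xi\cdot\overline{\vec\xi})\ge c(1+\tfrac1\omega)|\vec\xi|^2$ for all $\vec\xi\in\C^3$, and by the structure of \eqref{eq:effectiveperm} together with \eqref{eq:hyp3} and the $L^2$-bound on the corrector $\na_y\vec\chi$ (which follows from the cell problem \eqref{eq:cell-intro}), $\eff\in L^\infty(\Omega;\C^{3\times3})$. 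These are the only properties of $\eff$ that the argument uses.

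Next I would split $a^0 = b^0 - c^0$, where $b^0(\vu,\vv) = \int_\Omega \frac1\mu(\na\times\vu)\cdot(\na\times\overline\vv)\dx + \int_\Omega \vu\cdot\overline\vv\dx - \int_{\pa\Omega} i\omega\lambda\,\vu_T\cdot\overline\vv_T\dox$ is the ``principal part'' and $c^0(\vu,\vv) = \int_\Omega(\omega^2\eff + 1)\vu\cdot\overline\vv\dx$ collects the zeroth-order terms. The form $b^0$ is bounded and coercive on $X^0$: its real part controls $\|\na\times\vu\|_{L^2}^2 + \|\vu\|_{L^2}^2$, and the boundary term $-\int_{\pa\Omega}i\omega\lambda|\vu_T|^2$ contributes $i\omega\lambda\|\vu_T\|_{L^2(\pa\Omega)}^2$ to (minus) the imaginary part with $\lambda>0$, so taking a suitable linear combination of the real and imaginary parts of $b^0(\vu,\vu)$ bounds $\|\vu\|_{X^0}^2$ from below. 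By Lax--Milgram $b^0$ induces an isomorphism $X^0\to (X^0)'$. The perturbation $c^0$ is bounded on $X^0$ and, crucially, the embedding of $X^0$ into $L^2(\Omega;\C^3)$ is compact — here one invokes the standard compactness property for $H(\curl)$ fields with the impedance/tangential-trace regularity (cf.\ the function-space setup in Section~\ref{subsec:function-sp} and \cite{Monk03}), exactly as in the proof of Theorem~\ref{thm:existence}. Hence $\vu\mapsto c^0(\vu,\cdot)$ is a compact operator on $X^0$, and the Fredholm alternative reduces existence for all $\vJa$ to uniqueness: it suffices to show that $a^0(\vME,\vPsi)=0$ for all $\vPsi\in X^0$ forces $\vME=0$.

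For uniqueness I would test with $\vPsi=\vME$ and take the imaginary part:
\begin{align*}
  0 = \Im a^0(\vME,\vME)
  = -\omega^2\int_\Omega \Im(\eff\,\vME\cdot\overline{\vME})\dx
    - \omega\lambda\int_{\pa\Omega}|\vME_T|^2\dox,
\end{align*}
since the curl term is real. Both terms on the right are $\le 0$, and by the coercivity bound on $\Im\eff$ the first is $\le -c\omega^2(1+\tfrac1\omega)\int_\Omega|\vME|^2\dx$; therefore $\vME=0$ in $\Omega$. This closes the argument. The main obstacle — really the only nontrivial point — is the compact embedding $X^0\hookrightarrow L^2(\Omega;\C^3)$ underpinning the Fredholm alternative; once that is in hand (and it is the same ingredient already used for Theorem~\ref{thm:existence}, so one simply cites that proof), everything else is the coercivity of $b^0$ and the sign of $\Im\eff$. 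Note also that the dissipativity we built into the problem is genuinely needed here: without $\Im\eff>0$ one could have resonances, and uniqueness could fail.
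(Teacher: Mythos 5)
There is a genuine gap in the existence half of your argument: the compact embedding $X^0\hookrightarrow L^2(\Omega;\C^3)$, on which your whole Fredholm step rests, is false. The space $X^0$ carries no control of the divergence, so bounded sets are not precompact in $L^2$: take $\phi_n(\vx)=n^{-1}\sin(nx_1)\chi(\vx)$ with $\chi\in C_c^\infty(\Omega)$ and set $\vu_n=\nabla\phi_n$; then $\vu_n$ is curl-free, has vanishing tangential trace, is bounded in $X^0$, converges weakly to $0$ in $L^2$, yet $\|\vu_n\|_{L^2}\not\to0$. For time-harmonic Maxwell problems a Fredholm argument requires a Helmholtz-type decomposition of $X^0$ and compactness only on the (suitably constrained) divergence-controlled component, with the gradient part handled separately \cite{Monk03}; none of that is in your proposal, so the step ``$c^0$ is a compact perturbation'' does not go through. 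You also attribute this ingredient to the proof of Theorem~\ref{thm:existence}, but that proof uses no compactness whatsoever: the paper replaces $\mu$ by $\mu+i\eta$, applies Lax--Milgram to the rotated form $b_\eta=ia^d$ (whose real part is then coercive on $X^d$), extracts $\eta$-uniform a priori bounds from the real and imaginary parts of the energy identity, and passes to the limit $\eta\to0$. The dissipativity $\Im\eff\geq c>0$ makes this direct route available for the homogenized problem as well, which is exactly how the paper proves Theorem~\ref{thm:existence0}; with it, the Fredholm alternative (and hence any compactness) is unnecessary.

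A secondary point: you take the lower bound on $\Im\big(\eff\vec\xi\cdot\overline{\vec\xi}\big)$ for granted by citing the displayed inequality in Section~\ref{subsec:mainresult}, but establishing this positivity is the substantive content of the paper's proof. From the raw formula \eqref{eq:effectiveperm} the sign is not evident because of the corrector terms $\nabla_y\vec\chi$; the paper first derives the symmetrized representation \eqref{eq:effectiveperm1} by testing the cell problem with $\overline{\vec\chi_i}$, which exhibits $\eff\vec\xi\cdot\overline{\vec\xi}$ as a sum of weighted squares and yields \eqref{eq:hyp5} under \eqref{eq:hyp3}. Your uniqueness argument (testing with $\vME$ and taking imaginary parts) is fine once that bound is in hand, but by itself it does not repair the existence step.
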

The proof of this theorem is deferred to the end of
Section~\ref{sec:existence}.


\section{Proofs of Theorem \ref{thm:existence} and Theorem \ref{thm:existence0}}
\label{sec:existence}

In this section, we prove the existence of a solution for both the weak
formulation of the problem for finite microstructure scale, $d>0$, as well
as for homogenized problem \eqref{eq:weaka0}. The proof is based on the
Lax-Milgram theorem, which we recall here for the convenience of the
reader:
\begin{theorem}[Lax-Milgram Theorem]\label{thm:LM}
  Let $X$ be a Hilbert space over $\C$ and let $b:X\times X\to \C$ be a
  sesquilinear form such that
  \begin{align*}
    \begin{aligned}
      &b \text{ is bounded:}
      &\quad
      |b(u,v)| &\leq C_1 \| u\|_X\|v\|_X
      \qquad
      &\forall u,\, v\in X,
      \\
      &b \text{ is coercive:}
      &\quad
      \Re b(u,u) &\geq c_2 \|u\|_X^2
      \qquad
      &\forall u\in X.
    \end{aligned}
  \end{align*}
  Then, for any $L:X\to \C$ linear and bounded, there exists a unique $u\in
  X$ such that
  \begin{align*}
    b(u,\Psi) = L(\Psi) \qquad\forall \Psi \in X.
  \end{align*}
\end{theorem}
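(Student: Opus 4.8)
The plan is to reduce the variational problem to the invertibility of a single bounded linear operator on $X$ built from $b$ through the Riesz representation theorem, and then to exploit coercivity to show that this operator is bounded below and has dense range. Throughout I write $\langle\cdot,\cdot\rangle$ for the inner product of $X$, taken conjugate-linear in its second argument so as to match the conjugate-linearity of $b$ in its second slot (which is the form in which $b$ and the data functional appear in the applications).

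First I would construct the operator. For each fixed $u\in X$ the map $v\mapsto b(u,v)$ is a bounded conjugate-linear functional on $X$, of norm at most $C_1\|u\|_X$ by the boundedness hypothesis. The Riesz representation theorem therefore yields a unique $Au\in X$ with $b(u,v)=\langle Au,v\rangle$ for all $v\in X$, and $\|Au\|_X\le C_1\|u\|_X$. Linearity of $b$ in its first argument makes $u\mapsto Au$ a bounded linear operator $A:X\to X$. Applying Riesz once more to $L$ (which shares the conjugate-linearity of $b(u,\cdot)$ in its argument) produces a unique $f\in X$ with $L(v)=\langle f,v\rangle$ for all $v$. With these identifications, the equation $b(u,\Psi)=L(\Psi)$ for all $\Psi\in X$ is equivalent to the single operator equation $Au=f$, so the whole theorem amounts to showing that $A$ is a bijection of $X$.

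Next I would use coercivity twice. Taking $v=u$ gives $\langle Au,u\rangle=b(u,u)$, so by Cauchy--Schwarz and the coercivity estimate
\begin{align*}
  \|Au\|_X\,\|u\|_X \;\ge\; |\langle Au,u\rangle| \;\ge\; \Re\, b(u,u) \;\ge\; c_2\|u\|_X^2,
\end{align*}
whence $\|Au\|_X\ge c_2\|u\|_X$ for every $u$. This lower bound immediately yields injectivity of $A$ and, by a Cauchy-sequence argument, shows that the range $A(X)$ is closed in $X$. To upgrade this to surjectivity I would prove that $A(X)$ is dense: if $w\in X$ is orthogonal to $A(X)$, then in particular $\langle Aw,w\rangle=0$, so $c_2\|w\|_X^2\le\Re\,b(w,w)=\Re\langle Aw,w\rangle=0$, forcing $w=0$. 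Hence $A(X)^\perp=\{0\}$, so $A(X)$ is dense; being also closed, $A(X)=X$. Together with injectivity this makes $A$ a bijection, and the lower bound gives $\|A^{-1}\|\le 1/c_2$. Setting $u=A^{-1}f$ produces the unique solution, with the stability bound $\|u\|_X\le c_2^{-1}\|L\|_{X'}$ following from $\|f\|_X=\|L\|_{X'}$.

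Since this is a classical result, there is no deep obstacle; the only genuinely substantive point---and the step I would treat with the most care---is the passage from \emph{$A$ bounded below} to \emph{$A$ surjective}. Boundedness below alone delivers a closed range but not all of $X$; it is coercivity, via the orthogonality argument above, that excludes a nontrivial orthogonal complement and thereby closes the gap (this is precisely what distinguishes the nonsymmetric setting from the symmetric one, where Riesz applied to the inner-product form suffices). The remaining bookkeeping---verifying that the two Riesz identifications respect the chosen conjugate-linearity convention, and that the closed-range claim is correctly derived from the lower bound---is routine.
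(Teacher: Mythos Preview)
Your argument is the standard Riesz--representation proof of Lax--Milgram and is correct in all its steps: the operator $A$ is well defined and bounded, coercivity gives the lower bound $\|Au\|_X\ge c_2\|u\|_X$ (hence injectivity and closed range), and the orthogonality argument shows $A(X)^\perp=\{0\}$, so $A$ is onto.

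There is, however, nothing to compare against: the paper does not prove this theorem. It is merely \emph{recalled} as a classical tool (``which we recall here for the convenience of the reader'') and then applied in the proofs of Theorems~\ref{thm:existence} and~\ref{thm:existence0}. So your write-up supplies a proof where the paper deliberately omits one.
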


\begin{proof}[Proof of Theorem~\ref{thm:existence}]
  In order to apply Theorem~\ref{thm:LM}, we first replace the permeability
  $\mu\in \R$ by $\mu + i\eta$ for some small $\eta>0$ and consider the
  sesquilinear form $b^d_\eta(u,v)=ia^d(u,v)$. Indeed, we have
  \begin{align*}
    \Re b^d_\eta(\vu,\vu) & =
    \frac {\eta} {\mu^2+\eta^2} \int_{\Omega  } |\nabla\times \vu|^2 \, \dx
    + \int_{\Omega}  \omega^2 \Im(\e^d) |\vu|^2  \dx\\
    &\qquad\qquad + d \int_{\Sigma^d} \omega\,\frac 1d \Re( \sigma^d)
    |\vu_T|^2 \, \dox
    +\int_{\partial \Omega}  \omega\lambda  |\vu_T|^2 \, \dox,
  \end{align*}
  and so assumptions~\eqref{eq:hyp1}-\eqref{eq:hyp1'} implies
  \begin{align*}
    \Re b^d_\eta(\vu,\vu)  \geq c_\eta \| \vu\|_{X^d}^2 \qquad \forall \vu\in
    X^d,
  \end{align*}
  for some small constant $c_\eta>0$ depending on $\eta$ (and $d$).
  Furthermore, it is readily seen that assumptions \eqref{eq:hyp1}-\eqref{eq:hyp1'} also gives
  \begin{align*}
    | b^d_\eta(\vu,\vv)| \leq C  \| \vu\|_{X^d} \| \vv\|_{X^d}.
  \end{align*}
  Hence, the Lax-Milgram theorem (Theorem \ref{thm:LM}) implies the existence
  of a unique $\vu^\eta$ solution of
  \begin{equation}
    \label{eq:weakb}
      \vu^\eta \in X^d,
      \qquad
      b^d_\eta(\vu^\eta,\vv) = - \int_{\Omega } \omega \vJa \cdot
      \overline\vv\, \dx \qquad \forall \vv \in X^d.
  \end{equation}
  Next, we write
  \begin{align*}
    b^d_\eta(\vu^\eta,\vu^\eta) = - \int_{\Omega } \omega \vJa \cdot
    \overline\vu^\eta \, \dx,
  \end{align*}
  and by taking the real and imaginary parts of this relation, we obtain
  \begin{multline}
    \label{eq:bdimag}
    \int_{\Omega} \frac \eta {\mu^2+\eta^2} |\nabla\times \vu^\eta|^2\, \dx
    +\int_{\Omega } \omega^2 (\Im \e^d)  |\vu^\eta|^2\, \dx
    \\
    + d \int_{\Sigma^d} \omega \big(\frac 1 d\,\Re \sigma^d\big)
    |\vu^\eta_T|^2 \, \doy
    +\int_{\partial \Omega} \omega\lambda |\vu^\eta_T|^2\,\dox
    = - \int_{\Omega } \omega\, \Re (\vJa\cdot{ \overline \vu^\eta}) \, \dx
  \end{multline}
  and
  \begin{multline}
    \label{eq:bdreal}
    \int_{\Omega} \frac {\mu} {\mu^2+\eta^2} |\nabla\times \vu^\eta|^2\, \dx -
    \int_{\Omega} \omega^2 (\Re \e^d) |\vu^\eta |^2 \, \dx
    \\
    + d \int_{\Sigma^d} \omega \big(\frac 1 d\,\Im \sigma^d\big)
    |\vu^\eta_T|^2\, \dox
    = \int_\Omega \omega\, \Im (\vJa\cdot\overline \vu^\eta) \, \dx.
  \end{multline}
  Using \eqref{eq:hyp1}-\eqref{eq:hyp1'} and Young's inequality, we notice that
  \eqref{eq:bdimag} gives
  \begin{align}
    \label{eq:bdres1}
    \| \vu^\eta\|^2_{L^2(\Omega)} +  \| \vu^\eta_T\|^2_{L^2(\Sigma^d)} + \|
    \vu^\eta_T\|^2_{L^2(\partial \Omega)}  \leq C \|
    \vJa\|_{L^2(\Omega)}^{2},
  \end{align}
  and the use of~\eqref{eq:hyp1}-\eqref{eq:hyp1'} and~\eqref{eq:bdreal} combined
  with~\eqref{eq:bdres1} in turn implies
  \begin{align*}
    \|\nabla\times \vu^\eta\|^2_{L^2(\Omega)}  \leq C \|
    \vJa\|_{L^2(\Omega)}^{2}
  \end{align*}
  for some constant $C$ independent of $\eta$. Thus, we can pass to the
  limit $\eta\to 0$ in \eqref{eq:weakb}. This result completes the proof of
  Theorem \ref{thm:existence}.
\end{proof}

The corresponding result for the homogenized system can be established in a
similar vein.

\begin{proof}[Proof of Theorem~\ref{thm:existence0}]
  The well-posedness of \eqref{eq:weaka0} depends  on the properties of the
  effective permittivity, $\eff$. We recall that formula
  \eqref{eq:effectiveperm} gives
  \begin{multline*}
    \eff_{ij} = \int_Y\e(\vx,\vy)(\ve_ j +\nabla_y\vec\chi_j(\vx,\vy))\cdot
    \ve_i\dy
    \\
    -\frac1{i\omega} \int_{\Sigma_0}\big(\sigma(\vx,\vy)
    (\ve_j+\nabla_y\vec\chi_j(\vx,\vy))_T \cdot {\ve_i}_T \doy,
  \end{multline*}
  where $\vec\chi(\vx,\vy)$ solves the cell problem \eqref{eq:cell-intro}.
  Furthermore, by multiplying the cell problem by $\overline {\vec \chi_i}$ and
  integrating over $Y$ (see also the weak formulation
  \eqref{eq:cellweak}), we obtain
  \begin{multline*}
    \int_Y \e(\vx,\vy) (\ve_j +\nabla_y \chi_j(\vx,\vy)) \cdot \nabla_y
    \overline {\vec \chi_i}(\vy)\,\dy\,
    \\
    - \frac{ 1} { i \omega} \int_{\Sigma_0}\sigma(\vx,\vy)  \big(\ve_j +\nabla_y
    \vec\chi_j(\vx,\vy)\big)_T\cdot \big(\nabla_y \overline {\vec\chi_i}
    (\vy)\big)_T \, \doy =0.
  \end{multline*}
  Combining the last two formulas, we arrive at the following alternative
  formula for $\eff$:
  \begin{multline}
    \label{eq:effectiveperm1}
    \eff_{ij} = \int_Y\e(\vx,\vy)(\ve_ j +\nabla_y\vec\chi_j(\vx,\vy))\cdot (
    \ve_i +\nabla_y\overline{\vec\chi_i}(\vx,\vy))\dy
    \\
    -\frac1{i\omega} \int_{\Sigma_0} \big( \sigma(\vx,\vy)
    (\ve_j+\nabla_y\vec\chi_j(\vx,\vy))\big)_T \cdot \big(\ve_i + \nabla_y
    \overline {\vec\chi_i}(\vx,\vy)\big)_T \doy.
  \end{multline}
  In this form, we see that when  $\e$ and $\sigma$ are scalar (as in Theorem
  \ref{thm:hom}), we have
  \begin{align*}
    \eff \vec \xi\cdot \overline {\vec \xi} & = \int_Y\e(\vx,\vy)
    \biggl|\vec \xi+\sum_{j=1}^3 \na_{y} \chi_j(\vx,\vy)
    \xi_j\biggr|^2\dy
    \\
    &\qquad - \frac{ 1} { i \omega} \int_{\Sigma_0}\sigma(\vx,\vy)
    \biggl|\Big(\vec \xi+\sum_{j=1}^3 \na_{y} \chi_j(\vx,\vy) \xi_j
    \Big)_T\biggr|^2\dy,  \qquad \forall \vec \xi\in \R^3,
  \end{align*}
  and thus assumption \eqref{eq:hyp3} implies the inequality
  \begin{align}\label{eq:hyp5}
    \Im \big( \eff(\vx) \vec \xi\cdot \overline {\vec \xi} \big) \geq
    c |\vec\xi|^2.
  \end{align}
  We proceed exactly as in the proof of Theorem \ref{thm:existence} (with
  inequality \eqref{eq:hyp5} playing the role of assumption
  \eqref{eq:hyp1}-\eqref{eq:hyp1'}). This concludes the proof of
  Theorem~\ref{thm:existence0}.
\end{proof}


\section{Prerequisites and proof of Theorem \ref{thm:hom}}
\label{sec:convergence}

The core of this section is devoted to the two-scale convergence needed to
establish our main homogenization result. In order to pass to the limit
in~\eqref{eq:weaka} and prove Theorem~\ref{thm:hom}, we must first
establish an important a priori estimate as explained below
(Section~\ref{subsec:apriori-est}). The related machinery of two-scale
convergence is utilized in Section~\ref{subsec:2-sc}. This leads to the
proof of a main proposition for the homogenized system
(Proposition~\ref{prop:hom} in Section~\ref{subsec:homog-sys}); and the
extraction of the requisite cell problem (Section~\ref{subsec:cell-hom}).
We conclude this section by proving Theorem~\ref{thm:hom}; see
Section~\ref{subsec:cell-hom}.

\subsection{An a priori estimate}
\label{subsec:apriori-est}

We start with the following proposition, which establishes a crucial
\emph{uniform} a priori estimate that is used throughout the proof of this
section. We recall that the norm $\|\cdot\|_{L^2(\Sigma^d)}$ is defined by
\eqref{eq:Sigmanorm} and includes a factor equal to~$d$.
\begin{proposition}
  \label{prop:estimates}
  Under the assumptions of Theorem \eqref{thm:hom}, the solution $\vE^d$ of
  \eqref{eq:weaka} is bounded in $X^d$ \emph{uniformly with respect to
  $d$}. More precisely, there exists a constant $C$ independent of $d$ such
  that
  \begin{equation}\label{eq:estimate}
    \frac{1}{\omega}  \|\nabla\times
    \vE^d\|^2_{L^2(\Omega)} +\omega \| \vE^d\|^2_{L^2(\Omega)}+ \|
    \vE^d_T\|^2_{L^2(\Sigma^d)} +  \| \vE^d_T\|^2_{L^2(\partial \Omega)}
    \leq C \| \vJ_a\|_{L^2(\Omega)}.
  \end{equation}
\end{proposition}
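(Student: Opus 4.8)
The plan is to revisit the proof of Theorem~\ref{thm:existence}, this time tracking the dependence of every constant on $d$; note that only the pointwise bounds \eqref{eq:hyp1}--\eqref{eq:hyp1'} are used here, not the periodic structure of $\e^d,\sigma^d$. As there, I would replace $\mu$ by $\mu+i\eta$ ($\eta>0$ small), obtain from Lax--Milgram the unique solution $\vu^\eta\in X^d$ of \eqref{eq:weakb}, and test it against itself. Taking real and imaginary parts of $b^d_\eta(\vu^\eta,\vu^\eta)=-\int_\Omega\omega\,\vJa\cdot\overline{\vu^\eta}\,\dx$ produces exactly the identities \eqref{eq:bdimag} and \eqref{eq:bdreal}; the whole task is to read off from them bounds that are blind to $d$.

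First I would exploit \eqref{eq:bdimag}, whose left-hand side is a sum of four nonnegative terms. Dropping the curl term and using $\Im\e^d\ge c$, $\tfrac1d\Re\sigma^d\ge c$, $\lambda>0$ together with the definition \eqref{eq:Sigmanorm} of $\|\cdot\|_{L^2(\Sigma^d)}$, one gets
\[
  \omega^2 c\,\|\vu^\eta\|^2_{L^2(\Omega)}
  + \omega c\,\|\vu^\eta_T\|^2_{L^2(\Sigma^d)}
  + \omega\lambda\,\|\vu^\eta_T\|^2_{L^2(\partial\Omega)}
  \;\le\; \omega\,\|\vJa\|_{L^2(\Omega)}\,\|\vu^\eta\|_{L^2(\Omega)} .
\]
The decisive observation is that the factor $d$ built into the norm \eqref{eq:Sigmanorm} is exactly what converts the pointwise assumption \eqref{eq:hyp1'} on $\tfrac1d\Re\sigma^d$ into a coercivity constant on the sheets that does not depend on $d$; no other term in \eqref{eq:bdimag} carries a power of $d$. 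Hence $\|\vu^\eta\|_{L^2(\Omega)}\le C\omega^{-1}\|\vJa\|_{L^2(\Omega)}$, and feeding this back into the displayed inequality yields $d$-independent bounds for $\|\vu^\eta_T\|_{L^2(\Sigma^d)}$ and $\|\vu^\eta_T\|_{L^2(\partial\Omega)}$.

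Next I would use \eqref{eq:bdreal}, solved for the curl term. For $\eta$ small one has $\tfrac{\mu}{\mu^2+\eta^2}\ge\tfrac{1}{2\mu}$, and since $|\Re\e^d|\le C$ and $|\tfrac1d\Im\sigma^d|\le C$ --- so that, again via \eqref{eq:Sigmanorm}, the sheet term is bounded by $\omega C\,\|\vu^\eta_T\|^2_{L^2(\Sigma^d)}$ --- the entire right-hand side is controlled by the quantities already estimated. This gives $\|\nabla\times\vu^\eta\|^2_{L^2(\Omega)}\le C\|\vJa\|^2_{L^2(\Omega)}$ with $C$ independent of $\eta$ and $d$. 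Combining the four bounds in the weighted form of \eqref{eq:estimate} (with a Young's inequality on the right) closes the uniform estimate for $\vu^\eta$. Finally, letting $\eta\to0$: by the uniqueness statement of Theorem~\ref{thm:existence} the family $\vu^\eta$ converges to $\vE^d$ weakly in $X^d$, and each norm appearing in \eqref{eq:estimate} is weakly lower semicontinuous, so the estimate survives the limit. I expect the only genuine obstacle to be bookkeeping discipline: one must never absorb the factor $d$ into a generic constant but always pair it with the $\tfrac1d$ scaling of \eqref{eq:hyp1'}; once this is done systematically, the argument is simply a quantitative rerun of the existence proof.
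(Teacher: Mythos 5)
Your argument is correct and is essentially the paper's proof: the same energy identities obtained by testing with the solution itself, split into real and imaginary parts, with \eqref{eq:hyp1}--\eqref{eq:hyp1'} (equivalently \eqref{eq:hyp3}), Young's inequality, and the crucial pairing of the factor $d$ in \eqref{eq:Sigmanorm} with the $1/d$ scaling of $\sigma^d$. The only difference is cosmetic: the paper takes $\vPsi=\vE^d$ directly in \eqref{eq:weaka} (admissible since $\vE^d\in X^d$), so the $\eta$-regularization and the weak-lower-semicontinuity passage $\eta\to0$ in your version are an unnecessary, though harmless, detour.
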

\begin{proof}
  We take $\vPsi= {\vE^d}$ in \eqref{eq:weaka}, to obtain
  \begin{multline}
    \label{eq:energy0}
    \int_{\Omega  } \frac 1 {\mu} |\nabla\times \vE^d|^2\, \dx
    =
    \omega^2 \int_{\Omega } \e(\vx,\vx/d)  |\vE^d |^2 \, \dx
    + i\omega d \int_{\Sigma^d}  \sigma(\vx,\vx/d) |\vE^d_T|^2\, \dox
    \\
    + i\omega\lambda \int_{\partial \Omega} |\vE^d_T|^2\,\dox
    + i\omega\int_{\Omega } \vJa \cdot  \overline{\vE^d}\, \dx.
  \end{multline}
  Taking the real and imaginary part of \eqref{eq:energy0} gives,
  respectively,
  \begin{multline}
    \label{eq:energyreal}
    \int_{\Omega} \frac 1 {\mu} |\nabla\times \vE^d|^2\, \dx
    =
    \omega^2  \int_{\Omega}  \Re \e(\vx,\vx/d) |\vE^d |^2 \, \dx
    \\
    \quad - \omega d \int_{\Sigma^d}  \Im \sigma(\vx,\vx/d) |\vE^d_T|^2\, \dox
    - \omega \int_\Omega \Im (\vJa\cdot\overline \vE^d) \, \dx,
  \end{multline}
  and
  \begin{multline}
    \label{eq:energyimag}
    \omega^2 \int_{\Omega } \Im \e(\vx,\vx/d)  |\vE^d|^2\, \dx
    +\omega   d \int_{\Sigma^d}  \Re \sigma (\vx,\vx/d) |\vE^d_T|^2 \, \doy
    \\
    +\omega\lambda \int_{\partial \Omega}  |\vE^d_T|^2\,\dox
    = - \omega  \int_{\Omega}
    \Re (\vJa\cdot\overline \vE^d) \, \dx.
  \end{multline}
  By invoking~\eqref{eq:hyp3} and Young's inequality, we derive from
  \eqref{eq:energyimag} that
  \begin{align*}
   \omega \int_{\Omega } |\vE^d|^2\, \dx
    +  d \int_{\Sigma^d}  |\vE^d_T|^2 \, \doy
    +\lambda \int_{\partial \Omega}  |\vE^d_T|^2\,\dox
    \leq C \int_{\Omega} |\vJa|^2 \, \dx.
  \end{align*}
  Furthermore, by virtue of~\eqref{eq:energyreal} and~\eqref{eq:hyp4} we infer that
  \begin{align*}
    \int_{\Omega} \frac 1 {\mu} |\nabla\times \vE^d|^2\, \dx
    \leq C\omega \int_{\Omega} |\vJa|^2 \, \dx.
  \end{align*}
  The last two inequalities assert the desired result.
\end{proof}

\subsection{Two-scale convergence}
\label{subsec:2-sc}

The proof of our homogenization result~\eqref{eq:maxwellhomogenized} relies
on the well-known notion of two-scale convergence (see
\cite{allaire,nguetseng}). Several important results related to two-scale
convergence are summarized in \ref{app:two-scale} for the convenience of
the reader. Recall that the space $L^2_\#(Y;\C^k)$ (respectively
$H^1_\#(Y;\C^k)$) denotes the closure under the $L^2$ norm (respectively
$H^1$ norm) of the set $\C^\infty_\#(Y;\C^k)$ of smooth $Y$-periodic
functions defined on $\R^3$ with values in $\C^k$ (with $k=1$ or $3$).

Let $\vE^d(\vx)$ be the unique solution of \eqref{eq:weaka} as established
by Theorem \ref{thm:existence}. We define $\vH^d(\vx)$ by~\eqref{eq:mag}
for $\vx\in\Omega$. Proposition~\ref{prop:estimates} implies in particular
that $\vH^d\in L^2(\Omega;\C^3)$. The weak formulation of \eqref{eq:mag}
thus reads
\begin{align}
  \label{eq:weak1}
  \int_{\Omega} \vE^d\cdot (\nabla\times \overline \vPsi) \, \dx
  =
  i\omega\mu \int_{\Omega} \vH^d \cdot  \overline \vPsi\, \dx \qquad
  \forall \vPsi\in \HT,
\end{align}
where $\HT = \{\vu\in H(\curl;\Omega)\,;\, \vu_T = 0 \mbox{ on } \pa\Omega
\}.$ Furthermore, in view of~\eqref{eq:mag}, we can now  write \eqref{eq:weak} as
\begin{multline}
  \label{eq:weak2}
  \int_{\Omega}  \vH^d \cdot (\nabla\times \overline \vPsi) \, \dx
  - \int_{\partial \Omega} \lambda \vE^d \cdot  \overline\vPsi_T\, \dox
  +\int_{\Omega}i\omega\e(\vx,\vx/d) \vE^d\cdot\overline \vPsi\, \dx
  \\
  = d\int_{\Sigma^d}\sigma(\vx,\vx/d)\vE^d_T\cdot \overline \vPsi_T\, \dox
  + \int_{\Omega} \vJa\cdot\overline\vPsi\,\dx
  \qquad\forall \vPhi \in X^d.
\end{multline}
Our goal is to pass to the limit $d\to 0$ in~\eqref{eq:weak1} and
\eqref{eq:weak2}. We start with the following lemma.
\begin{lemma}
  \label{lem:ts}
  Up to a subsequence, the functions $\vE^d(\vx)$ and $\vH^d(\vx)$
  two-scale converge to functions $\vE^{(0)}(\vx,\vy)$ and
  $\vH^{(0)}(\vx,\vy)$ in $L^2(\Omega;L^2_\#(Y;\C^3))$ which satisfy
  \begin{align*}
    \vE^{(0)}(\vx,\vy) &= \vME(\vx) + \nabla_y\varphi(\vx,\vy),
    \\
    \vH^{(0)}(\vx,\vy) &= \vMH(\vx),
  \end{align*}
  for some functions $\vME$, $\vMH\in  L^2(\Omega;\C^3)$ and
  $\varphi(\vx,\vy)\in L^2(\Omega;H^1_\#(Y,\C))$.
\end{lemma}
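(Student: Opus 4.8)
The plan is to combine the uniform a priori bound from Proposition~\ref{prop:estimates} with the standard compactness theorem for two-scale convergence and the curl-free/divergence-free structure encoded in \eqref{eq:weak1} and the Maxwell system. First, by Proposition~\ref{prop:estimates}, the sequences $\vE^d$ and $\vH^d = \tfrac1{i\omega\mu}\nabla\times\vE^d$ are bounded in $L^2(\Omega;\C^3)$ uniformly in $d$; moreover $\nabla\times\vE^d$ is bounded in $L^2(\Omega;\C^3)$. The two-scale compactness theorem (recalled in \ref{app:two-scale}) then yields, up to a subsequence, limits $\vE^{(0)}(\vx,\vy)$ and $\vH^{(0)}(\vx,\vy)$ in $L^2(\Omega;L^2_\#(Y;\C^3))$, together with the two-scale limit of $\nabla\times\vE^d$, which has the form $\nabla\times\vME(\vx) + \nabla_y\times\vec\Theta(\vx,\vy)$ for the weak $L^2$-limit $\vME$ of $\vE^d$ and some $\vec\Theta\in L^2(\Omega;L^2_\#(Y;\C^3))$.

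Next I would identify the microscopic structure of $\vE^{(0)}$. Testing \eqref{eq:weak1} with oscillating test functions of the form $\vPsi(\vx) = d\,\vPhi(\vx,\vx/d)$ with $\vPhi$ smooth, compactly supported in $\vx$ and $Y$-periodic in $\vy$, and passing to the two-scale limit, the left-hand side produces $\int_\Omega\int_Y \vE^{(0)}(\vx,\vy)\cdot(\nabla_y\times\overline{\vPhi}(\vx,\vy))\,\dy\,\dx = 0$ (the leading macroscopic curl term vanishes because of the extra factor $d$, and the right-hand side vanishes in the limit). Hence $\vE^{(0)}(\vx,\cdot)$ is curl-free in $Y$ in the periodic sense for a.e.\ $\vx$, which gives the representation $\vE^{(0)}(\vx,\vy) = \vME(\vx) + \nabla_y\varphi(\vx,\vy)$ for some $\varphi\in L^2(\Omega;H^1_\#(Y,\C))$ — here the constant-in-$\vy$ part is precisely the weak $L^2$-limit $\vME$ of $\vE^d$, since the two-scale limit always averages (over $\vy$) to the weak limit. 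For $\vH^{(0)}$ I would run the analogous argument on \eqref{eq:weak2}: testing with $\vPsi(\vx) = d\,\vPhi(\vx,\vx/d)$, the two-scale limit forces $\int_\Omega\int_Y \vH^{(0)}(\vx,\vy)\cdot(\nabla_y\times\overline{\vPhi}(\vx,\vy))\,\dy\,\dx = 0$, so $\vH^{(0)}(\vx,\cdot)$ is likewise curl-free in $Y$; and testing the \emph{first} Maxwell equation $\nabla\times\vE^d = i\omega\mu\vH^d$ against $\tfrac1d\nabla_y\times(\text{periodic})$-type functions — equivalently, observing that $\nabla_y\cdot\vH^{(0)} = 0$ follows from $\nabla\times\vE^d$ having no $\tfrac1d$ singular part — pins down $\vH^{(0)}$ to be independent of $\vy$, i.e.\ $\vH^{(0)}(\vx,\vy) = \vMH(\vx)$ with $\vMH$ the weak $L^2$-limit of $\vH^d$.

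The main technical obstacle will be the bookkeeping in the two-scale passage to the limit in \eqref{eq:weak2}, specifically the surface term $d\int_{\Sigma^d}\sigma(\vx,\vx/d)\vE^d_T\cdot\overline\vPsi_T\,\dox$: with the oscillating test function scaled by $d$, this term carries an extra factor $d$ relative to the bulk terms and therefore drops out at this stage, but verifying that it genuinely vanishes (rather than contributing a surface two-scale limit) requires the uniform control $\|\vE^d_T\|_{L^2(\Sigma^d)}\le C$ from Proposition~\ref{prop:estimates} together with the scaling built into the norm \eqref{eq:Sigmanorm}; the serious treatment of this surface term — its non-trivial contribution — is postponed to Proposition~\ref{prop:limsum} and the subsequent recovery of the cell problem, so for this lemma only the cheap estimate is needed. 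A secondary point to handle carefully is the admissibility of the test functions $\vPhi(\vx,\vx/d)$ as two-scale test functions (smoothness and periodicity), which is standard once one invokes the density results quoted in \ref{app:two-scale}.
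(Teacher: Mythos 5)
Your proposal is correct and follows essentially the same route as the paper: uniform a priori bounds plus two-scale compactness, oscillating test functions of the form $d\,w(\vx)\vPhi(\vx/d)$ to show that $\vE^{(0)}$ and $\vH^{(0)}$ are curl-free in $\vy$ (with the gradient representation supplied by the paper's Lemma~\ref{lem:homf}), and oscillating gradients $d\,\nabla\big(u(\vx)v(\vx/d)\big)$ --- equivalently your observation that $\vH^d$ is proportional to a curl, hence divergence-free --- to get $\nabla_y\cdot\vH^{(0)}=0$ and conclude $\vH^{(0)}=\vMH(\vx)$, with the surface term killed by the extra factor of $d$ exactly as you argue. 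The only imprecision is your phrase about testing against ``$\tfrac1d\nabla_y\times$(periodic)''-type functions; the correct device, which you in fact state in the equivalent parenthetical, is gradient-type test functions.
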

\begin{proof}
  A proof of the Lemma can also be found in
  \cite{wellander2001,wellander2003}. Proposition \ref{prop:estimates}
  implies that the sequences $\vE^d$ and $\vH^d$ are both bounded in
  $L^2(\Omega;\C^3)$. The classical two-scale convergence result
  (Theorem~\ref{thm:ts}) thus implies that there exist some subsequences,
  still denoted $\vE^d$ and $\vH^d$, which two-scale converge to
  $\vE^{(0)}(\vx,\vy)$ and $\vH^{(0)}(\vx,\vy)$, respectively.

  We now consider a test function $\vPsi^d(\vx) =d w(\vx)\vPhi(\vx/d)$
  with $\vPhi\in C^\infty_\#(Y;\C^3)$ and $w\in \mathcal D(\Omega)$. Since
  $\vPsi^d=0$ on $\pa\Omega$, integration by parts entails
  \begin{align*}
    \int_\Omega \nabla \times \vE^d \cdot \vPsi^d\dx
    &=
    \int_\Omega \vE^d \cdot \nabla \times \vPsi ^d  \dx
    \\
    &=
    \int_\Omega \vE^d(\vx) \cdot \left[  d \nabla w (\vx) \times
    \vPhi(\vx/d) + w(\vx) \nabla \times \vPhi(\vx/d) \right]\dx.
  \end{align*}
  This equation can be re-arranged as
  \begin{multline*}
    \int_\Omega  w(\vx) \vE^d(\vx) \cdot  \nabla \times \vPhi(\vx/d)\,\dx
    \\
    =
    d \int_\Omega \nabla \times \vE^d(\vx) \cdot \vPhi(\vx/d)w(\vx) \dx
    - d \int_\Omega \vE^d(\vx) \cdot \nabla w (\vx)\times\vPhi(\vx/d)\dx.
  \end{multline*}
  Passing to the limit ($d\to 0$) via stability
  estimate~\eqref{eq:estimate}, we deduce that
  \begin{equation}
    \label{eq:wep}
    \int_\Omega w(\vx) \int _Y \vE^{(0)}(\vx,\vy) \cdot \nabla_y \times
    \vPhi(\vy) \,\dy \, \dx = 0.
  \end{equation}
  At this stage, introduce the homogenized electric field
  \begin{align*}
    \vME(\vx) = \int_Y \vE^{(0)}(\vx,\vy)\, \dy.
  \end{align*}
  Equation \eqref{eq:wep} implies that the vector-valued function
  $\vf(\vx,\vy) =  \vE^{(0)}(\vx,\vy) - \vME(\vx) \in
  L^2(\Omega;L^2_\#(Y;\C^3)))$ satisfies
  \begin{align*}
    \nabla_y \times \vf (\vx,\vy) = 0\quad\text{in } \mathcal
    D'(\Omega\times Y),\qquad \int_Y \vf(\vx,\vy)\, \dy =0
    \quad\text{for a.\,e. } \vx\in \Omega.
  \end{align*}
  Utilizing a result from Fourier analysis (Lemma~\ref{lem:homf}), we
  conclude that there exists a scalar $\varphi(\vx,\vy)$ in
  $L^2(\Omega;H^1_\#(Y))$ such that $f(\vx,\vy) =
  \nabla_y\varphi(\vx,\vy)$. Thus, we proved the first statement of
  Lemma~\ref{lem:ts}, namely, that
  \begin{align*}
    \vE^{(0)}(\vx,\vy)= \vME(\vx) + \nabla_y\varphi(\vx,\vy).
  \end{align*}
  In order to derive the corresponding result for $\vH^{(0)}$, we first
  note that~\eqref{eq:weak2} and the bounds of Proposition
  \ref{prop:estimates} entail the estimate
  \begin{equation}
    \label{eq:Hbd}
    \left| \int_\Omega \vH^d \cdot \nabla\times \vPsi\, \dx \right|
    \leq
    C\,\|\vJ_a\|_{L^2(\Omega)} \left\{\| \vPsi\|_{L^2(\Omega)} + \| \vPsi_T
    \|_{L^2( \partial\Omega)} + \| \vPsi_T \|_{L^2(\Sigma^d)} \right\}.
  \end{equation}
  By using a test function of the form  $\vPsi^d(\vx) =d w(\vx)\vPhi(\vx/d)
  $ with $\vPhi\in C^\infty_\#(Y;\C^3)$ and $w\in \mathcal D(\Omega)$, by
  analogy to~\eqref{eq:wep} we obtain
  \begin{align*}
    \int_\Omega w(\vx) \int_Y \vH^{(0)}(\vx,\vy) \cdot \nabla_y \times
    \vPhi(\vy)\, \dx\, \dy =0.
  \end{align*}
  By proceeding as above, we now show that there exists a scalar function
  $\varphi_1(\vx,\vy)\in L^2(\Omega;H^1_\#(Y;\C))$ such that
  \begin{equation}
    \label{eq:H0}
    \vH^{(0)}(\vx,\vy)= \vMH(\vx) + \nabla_y\varphi_1(\vx,\vy).
  \end{equation}
  The substitution of test function $\vPsi^d (\vx) = d \nabla (u(\vx)
  v(\vx/d))$ into~\eqref{eq:weak1} implies
  \begin{align*}
    0
    &=
    d \int_{\Omega } \vH^d(\vx) \cdot \nabla (u(\vx) v(\vx/d) ) \, \dx
    \\
    &=
    d \int_{\Omega } v(\vx/d) \vH^d (\vx)\cdot \nabla_x u(\vx) \, \dx +
    \int_{\Omega } u(\vx) \vH^d(\vx) \cdot  (\nabla_y v) (\vx/d) ) \, \dx.
  \end{align*}
  Passing to the limit ($d\to 0$), we observe that
  \begin{align*}
    \int_\Omega u(\vx) \int_Y \vH^{(0)}(\vx,\vy) \cdot \nabla_y v(\vy)\,\
    \dy\,\dx =0.
  \end{align*}
  This equality implies that $\nabla_y\cdot \vH^{(0)}(\vx,\vy) =0$ in the
  sense of distribution. Thus, by combining this result with~\eqref{eq:H0}
  we conclude that
  \begin{align*}
    \Delta_y \varphi_1(\vx,\vy)=0 \mbox{ in } \mathcal D'(Y)
    \quad\text{for a.\,e. } x\in\Omega.
  \end{align*}
  Together with the periodic boundary conditions ($\vy\mapsto
  \varphi_1(\vx,\vy) $ is in $H^1_\#(Y;\C)$) this statement implies that
  $\varphi_1(\vx,\vy)$ is constant and, thus, $\nabla_y \varphi_1 = 0$.
  Equality \eqref{eq:H0} now reduces to the equation $\vH^{(0)}(\vx,\vy)=
  \vMH(\vx)$. This result concludes the proof of Lemma~\ref{lem:ts}.
 \end{proof}

\subsection{Derivation of the homogenized system}
\label{subsec:homog-sys}
We now fix a subsequence $\{d_l\}_{l\in \N}$ such that $\lim_{l\to\infty}
d_l = 0$ and the functions $\vE^{d_l}(\vx)$ and $\vH^{d_l}(\vx)$  two-scale
converge  to $\vE^{(0)}(\vx,\vy) = \vME(\vx) + \nabla_y\varphi(\vx,\vy)$
and $\vH^{(0)}(\vx,\vy) = \vMH(\vx)$ as in Lemma \ref{lem:ts}. We denote $$
\vE^{l}(\vx):=\vE^{d_l}(\vx), \quad \vH^{l}(\vx):=\vH^{d_l}(\vx).$$ We will
then prove that the limits $\vME(x)$ and $\vMH(x)$ solve the homogenized
problem \eqref{eq:maxwellhomogenized}. More precisely, the main result that
we establish in this section is the following:
\begin{proposition}\label{prop:hom}
 The functions
 $\vE^l(\vx)$ and
  $\vH^l(\vx)$ converge weakly in $L^2(\Omega;\C^3)$ to $\vME(x)$ and $\vMH(x)$,
  where $\vME \in X^0$ and $\vMH\in L^2(\Omega;\C^3)$ satisfy
  \begin{multline}
    \label{eq:h1}
    \int_{\Omega} \vMH(\vx) \cdot (\nabla\times \overline \vPsi(\vx)) \, \dx
    -\int_{\partial \Omega} \lambda \vME _T (\vx) \cdot \overline\vPsi_T(\vx)
    \,\dox
    \\
    + \int_{\Omega}\int_Y i \omega \e(\vx,\vy) (\vME(\vx) +\nabla_y
    \varphi(\vx,\vy)) \cdot \overline \vPsi(\vx)\, \dy\, \dx
    \\
    - \int_\Omega\int_{\Sigma_0} \sigma(\vx,\vy) (\vME(\vx) +\nabla_y
    \varphi(\vx,\vy))_T \cdot \overline \vPsi_T(\vx)\, \doy \dx
    = \int_{\Omega} \vJa(\vx) \cdot \overline\vPsi(\vx)\,\dx,
  \end{multline}
  for all $\vPsi\in X^0$, and
  \begin{align}
    \label{eq:h2}
    \int_{\Omega} \vME \cdot (\nabla\times \overline \vPsi) \, \dx
    &=
    i\omega\mu \int_{\Omega} \vMH \cdot \overline \vPsi\, \dx
    &&\forall\,\vPsi\in\HT.
  \end{align}
  Furthermore, the corrector $\varphi(\vx,\vy)\in L^2(\Omega;H^1_\#(Y,\C))$
  satisfies $\nabla_y \varphi(\vx,\vy)_T \in L^2(\Omega\times\Sigma_0)$ and
  is determined by the following problem:
  \begin{multline}\label{eq:h3}
    i\omega\int_Y \e(\vx,\vy)\big(\vME(\vx)
    +\nabla_y \varphi(\vx,\vy)\big) \cdot \nabla_y \overline v(\vy)\,\dy
    \\
    = \int_{\Sigma_0} \sigma(\vx,\vy) \big(\vME(\vx) + \nabla_y
    \varphi(\vx,\vy)\big)_T\cdot \nabla_T \overline v (\vy) \, \doy
    \quad\forall v\in H^1_\#(Y;\C)
  \end{multline}
  and for a.\,e. $x\in \Omega$. In the above, $\Sigma_0$ denotes the
  hypersurface $\{y_3=0\}$ in $Y$.
\end{proposition}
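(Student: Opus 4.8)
The plan is to pass to the limit along the subsequence $\{d_l\}$ in the two weak formulations \eqref{eq:weak1} and \eqref{eq:weak2}, using three families of test functions: fixed $\vPsi\in\HT$ to produce \eqref{eq:h2}; fixed smooth $\vPsi$ (dense in $X^0$) to produce \eqref{eq:h1}; and oscillating test functions of gradient type to produce the cell problem \eqref{eq:h3}. The preliminary observations are immediate: by Lemma~\ref{lem:ts} and the $Y$-periodicity of $\varphi$ one has $\int_Y\vE^{(0)}(\vx,\vy)\dy=\vME(\vx)$ and $\int_Y\vH^{(0)}(\vx,\vy)\dy=\vMH(\vx)$, so $\vE^l\rightharpoonup\vME$ and $\vH^l\rightharpoonup\vMH$ in $L^2(\Omega;\C^3)$; and since Proposition~\ref{prop:estimates} bounds $\nabla\times\vE^l$ in $L^2(\Omega)$, the distributional limit gives $\vME\in H(\curl;\Omega)$ with $\nabla\times\vE^l\rightharpoonup\nabla\times\vME$. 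Passing to the limit in \eqref{eq:weak1} against a fixed $\vPsi\in\HT$ then yields \eqref{eq:h2} at once.

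To see $\vME\in X^0$, note that Proposition~\ref{prop:estimates} bounds $\vE^l_T$ in $L^2(\partial\Omega)$, so (up to a further subsequence) $\vE^l_T\rightharpoonup g$ there; applying the integration-by-parts identity $\int_\Omega(\nabla\times\vE^l)\cdot\overline\vPsi\dx-\int_\Omega\vE^l\cdot(\nabla\times\overline\vPsi)\dx=-\int_{\partial\Omega}\vE^l_T\cdot(\vn\times\overline\vPsi)\dox$ with fixed smooth $\vPsi$ and using the convergences above identifies $g$ with the tangential trace $\vME_T$. Hence $\vME_T\in L^2(\partial\Omega)$, i.e.\ $\vME\in X^0$, and (the limit being unique) $\vE^l_T\rightharpoonup\vME_T$ weakly on $\partial\Omega$.

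Now pass to the limit in \eqref{eq:weak2}. Because smooth fields lie in $X^{d}$ for every $d$ and are dense in $X^0$ while $a^0$ is continuous on $X^0$, it suffices to take $\vPsi$ smooth and fixed. Then $\int_\Omega\vH^l\cdot(\nabla\times\overline\vPsi)\dx$ converges by weak $L^2$ convergence of $\vH^l$, the boundary term by the previous paragraph, the source term trivially, and the bulk term $\int_\Omega i\omega\,\e(\vx,\vx/d_l)\vE^l\cdot\overline\vPsi\dx$ converges to $\int_\Omega\int_Y i\omega\,\e(\vx,\vy)(\vME+\nabla_y\varphi)\cdot\overline\vPsi\dy\dx$ by two-scale convergence of $\vE^l$ tested against the admissible function $\e(\vx,\vy)\overline\vPsi(\vx)$; this gives \eqref{eq:h1} modulo the surface term discussed below. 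For \eqref{eq:h3} I would instead insert $\vPsi^{d_l}(\vx)=d_l\nabla\bigl(u(\vx)v(\vx/d_l)\bigr)$ with $u\in\mathcal D(\Omega)$ and $v\in C^\infty_\#(Y;\C)$: its curl vanishes identically and it vanishes near $\partial\Omega$, so only the $\e$-, $\sigma$- and source terms survive; since $\vPsi^{d_l}=u(\vx)(\nabla_yv)(\vx/d_l)+O(d_l)$ two-scale converges to $u(\vx)\nabla_yv(\vy)$ and converges weakly to $\int_Y u\,\nabla_yv\dy=0$ in $L^2$, the source term drops out, and the same two-scale argument on the $\e$-term yields \eqref{eq:h3} once $u\in\mathcal D(\Omega)$ is let arbitrary. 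The bound $\nabla_y\varphi_T\in L^2(\Omega\times\Sigma_0)$ comes along the way from the $d$-weighted estimate of Proposition~\ref{prop:estimates} for $\vE^l_T$ on $\Sigma^{d_l}$.

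The one nontrivial passage — in both \eqref{eq:h1} and \eqref{eq:h3} — is the surface-current term, i.e.\ the limit of $d_l\int_{\Sigma^{d_l}}\sigma(\vx,\vx/d_l)\,\vE^l_T\cdot\overline{\vPhi_T(\vx,\vx/d_l)}\dox$ (with $\vPhi$ playing the role of $\vPsi$, resp.\ $u\,\nabla_yv$). Here the standard two-scale machinery does not apply directly, since $\vE^l$ is controlled on the codimension-one sets $\Sigma^{d_l}$ only through the weighted bound of Proposition~\ref{prop:estimates}; one must recognize the trace on $\Sigma_0$ of the bulk two-scale limit $\vME+\nabla_y\varphi$ as the correct limiting object. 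The plan is to prove this as a separate statement (Proposition~\ref{prop:limsum}): using that $\Sigma^{d_l}=\bigcup_{k}\Sigma_0\times\{kd_l\}$, so that $d_l\int_{\Sigma^{d_l}}(\,\cdot\,)\dox$ is a Riemann sum in the $x_3$-variable; that the $H(\curl)$-bound controls the variation of $\vE^l$ transverse to the sheets (so the surface values are comparable to a thin-slab bulk average); and the Lipschitz continuity \eqref{eq:hyp4} of $\sigma$ to freeze the slow variable — whence the layer sum converges, in the sense of distributions, to $\int_\Omega\int_{\Sigma_0}\sigma(\vx,\vy)(\vME+\nabla_y\varphi)_T\cdot\overline{\vPhi_T}\doy\dx$. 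I expect this surface-current limit to be the main obstacle; everything else is a routine passage to the limit built on Lemma~\ref{lem:ts}, Proposition~\ref{prop:estimates}, and the two-scale results recalled in \ref{app:two-scale}.
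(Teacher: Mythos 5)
Your proposal is correct and follows essentially the same route as the paper: pass to the limit in \eqref{eq:weak1} and \eqref{eq:weak2} with fixed test functions and with $d_l\nabla(u\,v(\cdot/d_l))$, identify the boundary trace limit by the integration-by-parts argument, and isolate the surface-current limit as a separate statement (Proposition~\ref{prop:limsum}), whose sketched proof via layerwise Riemann sums, the curl bound controlling transverse variation, and the Lipschitz regularity of $\sigma$ matches the paper's $\alpha^d(t)$ argument. The only cosmetic difference is that the paper obtains $\nabla_y\varphi_T\in L^2(\Omega\times\Sigma_0)$ inside the proof of Proposition~\ref{prop:limsum} rather than ``along the way,'' but this is the same estimate.
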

In order to prove Proposition~\ref{prop:hom} we have to pass to the limit
in \eqref{eq:weak1} and \eqref{eq:weak2}, as $d_l\to 0$. While this procedure
is relatively straightforward for \eqref{eq:weak1}, the passage to the
limit in \eqref{eq:weak2} is more delicate because of the presence of
surface integrals on $\Sigma^d$. We will prove the following key
proposition that establishes the limit of the requisite surface integrals:
\begin{proposition}\label{prop:limsum}
  The corrector
  $\varphi$  satisfies $\nabla_y \varphi(\vx,\vy)_T\in
  L^2(\Omega\times\Sigma_0)$, and  for all functions $\vF(\vx,\vy)$ defined
  in $\Omega\times Y$ that are periodic with respect to $y$ and admit
  $\vF,\; \na_x \vF,\; \na_y \vF \in L^\infty(\Omega\times Y)$ there holds:
  \begin{multline}\label{eq:limsum1}
    \lim_{k\to \infty} d_l \int_{\Sigma^{d_l}}  \vE^l_T(\vx) \cdot
    \vF_T(\vx,\vx/{d_l})\, \dox
    =
    \\
    \int_\Omega\int_{\Sigma_0} (\vME(\vx) +\nabla_y
    \varphi(\vx,\vy))_T\cdot F_T(\vx,\vy)\, \doy \dx.
  \end{multline}
\end{proposition}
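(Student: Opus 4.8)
\textbf{Proof proposal for Proposition~\ref{prop:limsum}.}

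The plan is to represent the surface integral over $\Sigma^{d_l}$ as a sum over the layers $k\in\Gamma^{d_l}$ and to convert this discrete sum into a bulk integral over $\Omega$ plus a vanishing error, after which the two-scale convergence of $\vE^l$ from Lemma~\ref{lem:ts} can be applied. First I would use definition~\eqref{eq:Sigmanorm} to write $d_l\int_{\Sigma^{d_l}}\vE^l_T\cdot\vF_T(\vx,\vx/d_l)\,\dox = \sum_{k\in\Gamma^{d_l}} d_l\int_{\Sigma_0}\vE^l_T(\vx',kd_l)\cdot\vF_T((\vx',kd_l),(\vx'/d_l,0))\,\dx'$ (using that in the reference configuration $\Sigma_0=\{y_3=0\}$ and $\vF$ is evaluated at $\vx/d_l$, whose third component is $k$, an integer, hence $\equiv0$ in $Y$). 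The key analytic device is to compare this ``slice'' integral at height $kd_l$ with a thin-slab average: for each $k$, write $d_l\,g(kd_l) = \int_{kd_l}^{(k+1)d_l} g(t)\,dt + \int_{kd_l}^{(k+1)d_l}\big(g(kd_l)-g(t)\big)\,dt$ applied to $g(t) = \int_{\Sigma_0}\vE^l_T(\vx',t)\cdot\vF_T((\vx',t),(\vx'/d_l,0))\,\dx'$. Summing over $k$, the leading terms assemble into $\int_{\tilde\Gamma^{d_l}}\!\int_{\Sigma_0}\vE^l_T(\vx',x_3)\cdot\vF_T((\vx',x_3),(\vx'/d_l,0))\,\dx'\,dx_3$, which by \eqref{eq:Gamma} differs from $\int_\Omega$ by integrals over strips of width $O(d_l)$ near $x_3=\pm L$, negligible since $\vE^l_T$ is bounded in $L^2$.

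For the leading bulk term, the issue is that the oscillating test function is $\vF_T((\vx',x_3),(\vx'/d_l,0))$ rather than the usual $\vF_T(\vx,\vx/d_l)$ — the third microscopic variable is frozen at $0$ instead of $x_3/d_l$. Here I would invoke a slightly adapted two-scale convergence statement: since $\vE^l \to \vME(\vx)+\nabla_y\varphi(\vx,\vy)$ two-scale, and $\vx\mapsto\vF_T((\vx',x_3),(\vx'/d_l,0))$ is an admissible oscillating test function whose profile $\vy\mapsto\vF_T(\vx,(\vy',0))$ is smooth and $Y$-periodic (constant in $y_3$), the two-scale limit picks out $\int_Y(\vME+\nabla_y\varphi)_T\cdot\vF_T(\vx,(\vy',0))\,\dy$; since the profile is $y_3$-independent and $\Sigma_0$ is the full section $\{y_3=0\}$ with unit area in the $(y_1,y_2)$ variables, $\int_Y(\cdots)\,\dy = \int_{\Sigma_0}(\vME+\nabla_y\varphi)_T\cdot\vF_T(\vx,\vy)\,\doy$, which is exactly the right-hand side of \eqref{eq:limsum1}. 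To make this rigorous I would approximate $\vF$ in $C^0$ by finite sums $\sum_j a_j(\vx)b_j(\vy')$ with $b_j$ smooth periodic, use the standard two-scale convergence of products (Theorem~\ref{thm:ts}-type statement in \ref{app:two-scale}) term by term, and pass to the limit in the approximation using the uniform $L^2$ bound on $\vE^l$ and the uniform bounds $\|\vF\|_\infty, \|\na_x\vF\|_\infty, \|\na_y\vF\|_\infty \le C$.

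The error terms require the Lipschitz-type control on $\vF$ and a uniform modulus-of-continuity estimate for $\vE^l_T$ in the $x_3$-direction; this is where the hypothesis $\na_x\vF,\na_y\vF\in L^\infty$ is essential. The term $\int_{kd_l}^{(k+1)d_l}(g(kd_l)-g(t))\,dt$ splits into a piece where only $\vF$ varies in $t$ — bounded by $d_l\cdot(\text{Lip constant of }\vF)\cdot d_l\cdot\|\vE^l_T(\cdot,\cdot)\|_{L^2}$-type quantities, hence $O(d_l)$ after summing — and a piece where $\vE^l_T(\vx',kd_l)$ is compared with $\vE^l_T(\vx',t)$. \textbf{This last piece is the main obstacle}: we have no uniform $H^1$ or pointwise-in-$x_3$ control on $\vE^l_T$, only the trace bound $\sum_k d_l\|\vE^l_T(\cdot,kd_l)\|^2_{L^2(\Sigma_0)}\le C$ and $\|\vE^l\|_{H(\curl)}\le C$. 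I expect one must handle it in an averaged/distributional sense rather than slice-by-slice — e.g.\ test \eqref{eq:weak2} or \eqref{eq:weak1} against a suitable piecewise-constant-in-$x_3$ comparison of $\vF$, so that the difference between $\vE^l_T$ at the sheet and its slab average is itself controlled by $\nabla\times\vE^l$ and the data, using that $\vE^l$ solves Maxwell's equations; this is presumably the ``direct'' argument the authors advertise as replacing two-scale convergence on surfaces. Once this oscillation error is shown to be $o(1)$, assembling the three pieces — bulk two-scale limit, boundary-strip error, slab-oscillation error — yields \eqref{eq:limsum1}, and the claim $\nabla_y\varphi_T\in L^2(\Omega\times\Sigma_0)$ follows a posteriori by choosing $\vF$ adapted to $\nabla_y\varphi$ (density argument) and reading off the resulting bound from the uniform estimate~\eqref{eq:estimate}.
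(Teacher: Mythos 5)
There is a genuine flaw in your scaffold, and it appears before the step you flag as the ``main obstacle''. Your bulk term tests $\vE^l$ against $\vF_T(\vx,(\vx'/d_l,0))$, a function that oscillates only in $\vx'$ and \emph{not} in $x_3$; its two-scale limit is therefore $\int_\Omega\int_Y \vE^{(0)}_T(\vx,\vy)\cdot\vF_T(\vx,\vy',0)\,\dy\,\dx$, i.e.\ the $y_3$-\emph{average} of $\vE^{(0)}_T=(\vME+\na_y\varphi)_T$, not its trace on $\Sigma_0$. Your identification ``$\int_Y(\cdots)\,\dy=\int_{\Sigma_0}(\cdots)\,\doy$ since the profile is $y_3$-independent'' is incorrect: the profile being constant in $y_3$ does not help, because $\vE^{(0)}$ itself depends on $y_3$ (the corrector solves a transmission problem across $\Sigma_0$, so $\na_{y'}\varphi$ at $y_3=0$ differs generically from its $y_3$-average). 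Consequently the ``slab-oscillation error'' you hope to show is $o(1)$ in fact converges to exactly this nonzero discrepancy between trace and average; no estimate on $\vE^l$ can make it vanish, so the decomposition ``slice $=$ slab average $+\,o(1)$'' cannot produce \eqref{eq:limsum1}.

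The paper's route avoids this by keeping the whole family of shifted slice integrals $\alpha^d(t)=\sum_{k\in\Gamma^d} d\int_\Sigma \vE^d_T(\vx',(k+t)d)\cdot\vF_T(\vx',(k+t)d;\vx'/d,0)\,\dx'$, $t\in[0,1)$, rather than comparing $\alpha^d(0)$ with its $t$-average. Testing against $1$-periodic $\varphi(x_3/d)$ (so the test function \emph{does} oscillate in $x_3$) gives $\alpha^d\rightharpoonup\alpha^0$ weakly in $L^2(0,1)$ with $\alpha^0(t)=\int_\Omega\int_{Y'}\vE^{(0)}_T(\vx;\vy',t)\cdot\vF_T(\vx;\vy',0)\,\dy'\,\dx$, a limit that retains the $y_3$-dependence (Lemma~\ref{lem:alpha1}). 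The missing key estimate --- the one you defer to ``presumably the direct argument'' --- is the uniform bound $\|\tfrac{d}{dt}\alpha^d\|_{L^2(0,1)}\le C$ of Lemma~\ref{lem:alpha2}: differentiating in $t$ produces $d\,\pa_{x_3}\vE^d_T$ on each slice, one writes $\pa_{x_3}E^d_1=(\na\times\vE^d)_2+\pa_{x_1}E^d_3$ (similarly for the second component), integrates the tangential derivative by parts onto the smooth $\vF$ over $\Sigma$ (the $1/d$ from the fast variable is absorbed by the extra factor of $d$), and uses only the uniform bounds on $\|\vE^d\|_{L^2(\Omega)}$, $\|\na\times\vE^d\|_{L^2(\Omega)}$ and $\|\vE^d_T\|_{L^2(\pa\Omega)}$ from Proposition~\ref{prop:estimates} --- the Maxwell system itself is not invoked again, contrary to your guess. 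Then $H^1(0,1)\hookrightarrow C^{1/2}(0,1)$ upgrades weak to uniform convergence, and evaluating at $t=0$ yields the surface integral over $\Sigma_0$; the $L^2(\Omega\times\Sigma_0)$ integrability of $(\na_y\varphi)_T$ follows by the duality bound on $\alpha^0(0)$ much as you sketched. So your instinct that $\na\times\vE^l$ must control the $x_3$-oscillation is right in flavor, but the needed statement is equicontinuity of $\alpha^d$ in the shift variable, not closeness of the $t=0$ slice to the slab average, which is false.
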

The proof of this proposition is deferred to Section~\ref{subsec:limsum}.
\begin{proof}[Proof of Proposition~\ref{prop:hom}]
  The choice of the subsequence $\vE^l(\vx)$ and $\vH^l(\vx)$ together with
  classical two-scale convergence results (Theorem~\ref{thm:ts}) imply that
  $\vE^l(\vx)$ and $\vH^l(\vx)$ converge weakly (in $L^2(\Omega;\C^3)$) to
  the functions $\vME(\vx)$ and $\vMH(\vx)$. Furthermore, Proposition
  \ref{prop:estimates} implies that $\vME(\vx)$ is in $X^0$.

  Passing to the limit in \eqref{eq:weak1}, as $d_l\to 0$, we obtain
  \begin{equation}
    \label{eq:weakh1}
    \int_{\Omega} \vME \cdot (\nabla\times \overline \vPsi) \, \dx
    =
    i\omega\mu \int_{\Omega } \vMH \cdot \overline \vPsi\, \dx
    \qquad\forall\,\vPsi\in \HT,
  \end{equation}
  which implies \eqref{eq:h2}.

  Next, the estimate of Proposition \ref{prop:estimates} implies that
  $\nabla\times \vE^l$ is bounded in $L^2(\Omega)$ and thus converges
  weakly in $L^2(\Omega)$ to $\nabla\times \vME$. Furthermore, Proposition
  \ref{prop:estimates} also implies that $\vE^l_T|_{\partial \Omega}$ is
  bounded in $L^2(\partial\Omega)$. In particular, there is a subsequence
  $\vE^{l'}_T|_{\partial \Omega}$ which converges weakly in
  $L^2(\partial\Omega)$ and for any smooth test function $\vPsi$ we have:
  \begin{align*}
    \int_{\pa\Omega} \vE^{l'}_T \cdot (\overline \vPsi \times \nu) \,\dox
    & = \int_\Omega (\nabla\times \vE^{l'}) \cdot \overline \vPsi - \vE^{l'} \cdot (\nabla\times\overline \vPsi )\, dx
    \\
    & \to  \int_\Omega (\nabla\times \vME) \cdot\overline  \vPsi - \vME \cdot (\nabla\times \overline \vPsi) \, dx
    \\
    & =\int_{\pa\Omega} \vME_T \cdot(  \overline\vPsi\times \nu) \,\dox.
  \end{align*}
  We deduce that this weak limit is $ \vME_T|_{\partial\Omega}$. Since the
  limit is independent of the subsequence $l'$, the whole sequence
  $\vE^{l}_T|_{\partial \Omega}$ converges weakly, viz.,
  \begin{align*}
    \vE^l_T|_{\partial\Omega} \rightharpoonup \vME_T|_{\partial\Omega}
    \quad\text{weakly in }  L^2(\partial\Omega).
  \end{align*}

  We can now pass to the limit in \eqref{eq:weak2}. By invoking the usual
  properties of two-scale convergence in combination with the results of
  Proposition \ref{prop:limsum} and the choice $\vF(\vx,\vy) =
  \sigma(\vx,\vy) \overline \vPsi (\vx,\vy)$, where $\vPsi(\vx,\vy)$ is a
  smooth vector-valued test function, we deduce that
  \begin{multline*}
    \int_{\Omega} \vMH(\vx) \cdot (\nabla\times \overline \vPsi(\vx)) \, \dx
    -\int_{\partial \Omega} \lambda \vME _T (\vx) \cdot \overline\vPsi_T(\vx)
    \,\dox
    \\
    + \int_{\Omega}\int_Y i \omega \e(\vx,\vy) (\vME(\vx) +\nabla_y
    \varphi(\vx,\vy)) \cdot \overline \vPsi(\vx)\, \dy\, \dx
    \\
    = \int_\Omega\int_{\Sigma_0} \sigma(\vx,\vy) (\vME(\vx) +\nabla_y
    \varphi(\vx,\vy))_T \cdot \overline \vPsi_T(\vx)\, \doy \dx
    + \int_{\Omega} \vJa(\vx) \cdot \overline\vPsi(\vx)\,\dx,
  \end{multline*}
  which in turn gives \eqref{eq:h1}.

  Finally, by taking $\vPsi (\vx) = d_l \nabla (u(\vx) v(\vx/d_l) )$ as a
  smooth test function in \eqref{eq:weak2}, as well as by using
  Proposition~\ref{prop:limsum} with $\vF(\vx,\vy) = u(\vx)\na_y v(\vy)$ we
  pass to the limit (as $d_l\to 0$):
  \begin{multline}\label{eq:cellweak0}
    \int_{\Omega }  \overline u(x) \int_Y i \omega  \e(\vx,\vy) (\vME(\vx)
    +\nabla_y \varphi(\vx,\vy))   \cdot  \nabla_y  \overline v(\vy)\,\dy\,
    \dx
    \\
    = \int_\Omega  \overline u(x)  \int_{\Sigma_0}  \sigma(\vx,\vy) (\vME(\vx)
    +\nabla_y \varphi(\vx,\vy))_T\cdot \nabla_T  \overline v (\vy) \, \doy\,
    \dx,
  \end{multline}
  which is the weak formulation of cell problem \eqref{eq:h3}. This
  concludes the proof of Proposition~\ref{prop:hom}.
\end{proof}

\subsection{Proof of Proposition~\ref{prop:limsum}}
\label{subsec:limsum}
To simplify the notations in this proof, we drop the $l$ dependence of the
subsequence, although it is understood that all limits are taken along the
subsequence $d_l$.

Recalling our notation $\vx=(\vx',x_3)\in\Sigma\times\Gamma$, we note that
\begin{align*}
  d \int_{\Sigma^d} \vE^{d}_T(\vx)\cdot \vF_T(\vx,\vx/{d})\, \dox
  &=
  \sum_{l\in \Gamma^d} d\int_\Sigma \vE^d_T(\vx',kd) \cdot
  \vF_T(\vx',kd;\vx'/d,k) \, \dx'
  \\
  &=
  \sum_{k\in \Gamma^d} d\int_\Sigma \vE^d_T(\vx',kd) \cdot
  \vF_T(\vx',kd;\vx'/d,0) \, \dx'.
\end{align*}
Here, the second equality stems from the fact that $\vec F_T$ is
$y_3$-periodic with period 1. We now introduce the function
\begin{align*}
  \alpha^d(t)
  =
  \sum_{k\in \Gamma^d} d\int_\Sigma \vE^d_T(\vx',(k+t)d) \cdot
  \vF_T(\vx',(k+t)d;\vx'/d,0) \, \dx',
\end{align*}
defined for $t\in [0,1)$. We point out that we only have $\vE^d_T\in
H^{-1/2}(\Sigma)$, so the integral above is not well defined in the
classical sense. However, $F$ is a smooth test function, so we can make
sense of the integral as a duality bracket $\langle \cdot,
\cdot\rangle_{H^{-1/2},H^{1/2}}$. The limit that we need to characterize in
order to prove Proposition \ref{prop:limsum} is $\lim_{d\to0}\alpha^d(0)$.
The desired result will ensue from the following two lemmas.
\begin{lemma}\label{lem:alpha1}
  Under the assumptions of Proposition \ref{prop:limsum}, the function
  $\alpha^d$ satisfies
  \begin{align}
    \|\alpha^d\|_{L^2(0,1)} & \leq C \|\vF\|_{L^\infty (\Omega) } \|
    \vE^d\|_{L^2(\Omega)} \label{eq:ineqalpL2}
    \\
    &  \leq C\,\|\vF\|_{L^\infty (\Omega) } \,
    \|\vJ_a\|_{L^2(\Omega)}\nonumber
  \end{align}
  and is thus bounded in $L^2(0,1)$. Furthermore, when $d\to0$, it converges
  weakly in $L^2(0,1)$
  to the function
  \begin{align*}
    \alpha^0(t):=
    \int_\Omega \int_{Y'} \vE^{(0)}_T(\vx;\vy',t)\cdot
    \vF_T(\vx;\vy',0)\,\dy'\dx,
  \end{align*}
  where $Y'=[0,1]^2$.
\end{lemma}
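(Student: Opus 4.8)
The plan is to prove the bound \eqref{eq:ineqalpL2} first, then extract the weak limit by testing $\alpha^d$ against smooth functions of $t$ and recognizing the resulting expression as a two-scale limit. For the bound, I would start from the definition of $\alpha^d(t)$ and estimate the duality bracket $\langle \vE^d_T(\cdot,(k+t)d),\vF_T(\cdot,(k+t)d;\cdot/d,0)\rangle$ on each slice $\Sigma\times\{(k+t)d\}$. The key is to trade the $H^{-1/2}(\Sigma)$ trace of $\vE^d_T$ on the slice for the full $L^2(\Omega)$ norm of $\vE^d$ and $\nabla\times\vE^d$ via a trace/Green's-formula argument: for a fixed slice $x_3=s$, one has an estimate of the form $\big|\langle \vE^d_T(\cdot,s),\vg\rangle\big|\le C\|\vg\|_{C^1}\,\big(\|\vE^d\|_{L^2(\Sigma\times(s,s+\delta))}+\|\nabla\times\vE^d\|_{L^2(\Sigma\times(s,s+\delta))}\big)$, obtained by integrating the identity $\int\nabla\times\vE^d\cdot\vPhi - \vE^d\cdot\nabla\times\vPhi = \int_{\partial}(\nu\times\vE^d)\cdot\vPhi$ over a thin slab and choosing $\vPhi$ to interpolate the test datum. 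Summing this over $k\in\Gamma^d$ (the slabs $[kd,(k+1)d]\times\Sigma$ tile $\tilde\Gamma^d\times\Sigma\subset\Omega$ with bounded overlap after integrating in $t$), integrating in $t\in(0,1)$, and absorbing the factor-of-$d$ scaling that is built into $\|\cdot\|_{L^2(\Sigma^d)}$, yields $\|\alpha^d\|_{L^2(0,1)}\le C\|\vF\|_{L^\infty}\big(\|\vE^d\|_{L^2(\Omega)}+\tfrac1\omega\|\nabla\times\vE^d\|_{L^2(\Omega)}\big)$; invoking Proposition~\ref{prop:estimates} then gives both displayed inequalities. (Here one uses that $\vF$ together with $\na_x\vF,\na_y\vF\in L^\infty$ controls the relevant $C^1$-type norm of the rescaled test field.)

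For the weak limit, I would fix $\psi\in C^\infty_c(0,1)$ and rewrite
\begin{align*}
  \int_0^1 \alpha^d(t)\,\psi(t)\,\dt
  = \sum_{k\in\Gamma^d} d\int_0^1\!\!\int_\Sigma \psi(t)\,\vE^d_T(\vx',(k+t)d)\cdot\vF_T(\vx',(k+t)d;\vx'/d,0)\,\dx'\dt,
\end{align*}
and, via the change of variables $x_3=(k+t)d$ on each slab, recognize the right-hand side as
\begin{align*}
  \int_{\tilde\Gamma^d}\!\!\int_\Sigma \psi\!\left(\tfrac{x_3}{d}\right) \vE^d_T(\vx',x_3)\cdot\vF_T\!\left(\vx',x_3;\tfrac{\vx'}{d},0\right)\dx'\,dx_3
  = \int_\Omega \vE^d_T(\vx)\cdot \vG\!\left(\vx,\tfrac{\vx}{d}\right)\chi_{\tilde\Gamma^d}(x_3)\,\dx,
\end{align*}
where $\vG(\vx,\vy):=\psi(y_3)\,\vF_T(\vx',x_3;\vy',0)$ is an admissible ($Y$-periodic, smooth in $\vy$) two-scale test field and $\chi_{\tilde\Gamma^d}\to 1$ a.e. by \eqref{eq:Gamma}. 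Since $\vE^d$ two-scale converges to $\vE^{(0)}(\vx,\vy)=\vME(\vx)+\nabla_y\varphi(\vx,\vy)$ (Lemma~\ref{lem:ts}) — and, by the uniform bound just proved, its tangential part against such test fields passes to the limit — the integral converges to $\int_\Omega\int_Y \vE^{(0)}_T(\vx,\vy)\cdot\psi(y_3)\vF_T(\vx',x_3;\vy',0)\,\dy\,\dx$; performing the $y_3$-integration (using that $\vF_T$ is evaluated at $y_3=0$ and only $\psi(y_3)$ depends on $y_3$) identifies this with $\int_0^1\psi(t)\,\alpha^0(t)\,\dt$. As $\psi$ is arbitrary, $\alpha^d\rightharpoonup\alpha^0$ weakly in $L^2(0,1)$. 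One should also note in passing that the same estimate with $\vF$ replaced by the corrector test field shows $\nabla_y\varphi_T\in L^2(\Omega\times\Sigma_0)$, which is asserted in the statement of Proposition~\ref{prop:limsum}.

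The main obstacle is the low regularity of $\vE^d_T$ on the individual hypersurface slices: $\vE^d_T|_{x_3=s}$ lives only in $H^{-1/2}(\mathrm{div}_\Sigma,\Sigma)$, so the slice integrals are genuinely duality pairings and the uniform-in-$d$ control has to come entirely from the bulk $H(\curl;\Omega)$ data through a careful slab-trace inequality, with the constant independent of which slice $x_3=s$ is taken and independent of $d$ after the $t$-average. Making that trace estimate quantitative — and checking that summation over the $O(1/d)$ slices, combined with the built-in factor $d$, reproduces exactly the $L^2(\Omega)$ norm — is the technical heart of the lemma; everything after that is a standard two-scale-convergence identification.
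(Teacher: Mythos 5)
Your second half (identification of the weak limit) follows the paper's own route almost verbatim: test $\alpha^d$ against a function of $t$, undo the change of variables $x_3=(k+t)d$ to rewrite the pairing as a bulk integral over $\Sigma\times\tilde\Gamma^d$ against the admissible oscillating field $\vF_T(\vx;\vx'/d,0)\,\varphi(x_3/d)$, invoke two-scale convergence of $\vE^d$ together with \eqref{eq:Gamma}, and read off $\int_0^1\alpha^0\varphi$. That part is fine.

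The first half, however, has a genuine gap, and it stems from a missed (and much simpler) idea. The bound \eqref{eq:ineqalpL2} in the paper is \emph{not} obtained from any trace or Green's-formula estimate on individual slices: one integrates $|\alpha^d(t)|^2$ in $t$ first, applies Cauchy--Schwarz in the sum over $k$ and in $\vx'$, and then the change of variables $x_3=(k+t)d$ turns $\sum_k d\int_0^1\int_\Sigma|\vE^d_T(\vx',(k+t)d)|^2\dx'\dt$ into $\int_{\tilde\Gamma^d}\int_\Sigma|\vE^d_T(\vx',x_3)|^2\dx'\dx_3\le\|\vE^d\|^2_{L^2(\Omega)}$; the low regularity of the trace on a fixed slice is irrelevant because the $t$-average is a bulk quantity (the paper makes the slice integrals meaningful by assuming $\vE^d$ smooth and concluding by density). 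Your slab-trace strategy, by contrast, cannot deliver \eqref{eq:ineqalpL2} as stated: the Green's identity forces the full curl of the extended test field into the estimate, and the rescaled datum $\vF_T(\cdot,\cdot;\cdot/d,0)$ has tangential derivatives of size $1/d$, so the constant necessarily involves $\|\na_x\vF\|_{L^\infty}+\|\na_y\vF\|_{L^\infty}$ (not just $\|\vF\|_{L^\infty}$), plus curl and lateral-boundary terms; whether the $1/d$ loss is exactly compensated by the slab thickness $\delta\sim d$ is precisely the quantitative step you defer as ``the technical heart,'' and you never carry it out (nor do you address the lateral boundary contributions on $\pa\Sigma\times\Gamma$, which would have to be absorbed via $\|\vE^d_T\|_{L^2(\pa\Omega)}$). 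In short: the estimate you would prove is weaker than \eqref{eq:ineqalpL2}, its key inequality is left unverified, and the curl norm plays no role in this lemma at all --- it is needed only in Lemma~\ref{lem:alpha2} for the bound on $\mathrm{d}\alpha^d/\mathrm{d}t$, which together with Sobolev embedding is what ultimately gives pointwise control at $t=0$.
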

\begin{lemma}\label{lem:alpha2}
  Under the assumptions of Proposition \ref{prop:limsum}, there exists a
  constant $C$ (depending on $F$, but independent of $d$) such that
  \begin{align}
     \left\| \frac{d \alpha^d}{dt}\right\|^2_{L^2(0,1)}& \leq C (1+d^2) \| \vE^d\|^2_{L^2(\Omega)} \nonumber\\
     &\qquad +
     C d^2( \| \na\times \vE^d\|^2_{L^2(\Omega)}
     +\|\vE^d\|^2_{L^2(\pa\Omega)})\label{eq:alpbdder}\\
     & \leq C (1+d^2) \|\vJ_a\|^2_{L^2(\Omega)}\nonumber
  \end{align}
\end{lemma}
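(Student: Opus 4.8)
The plan is to estimate $\frac{d\alpha^d}{dt}$ by differentiating under the integral sign and then recognizing the resulting expression in terms of a volume integral over a layer, which can be controlled by the uniform bound from Proposition~\ref{prop:estimates}. First I would compute
\begin{align*}
  \frac{d\alpha^d}{dt}
  =
  \sum_{k\in\Gamma^d} d\int_\Sigma
  \Big[ d\,\partial_{x_3}\vE^d_T(\vx',(k+t)d)\cdot\vF_T(\vx',(k+t)d;\vx'/d,0)
  + d\,\vE^d_T(\vx',(k+t)d)\cdot\partial_{x_3}\vF_T(\cdots) \Big]\dx'.
\end{align*}
The second term is harmless: $\partial_{x_3}\vF$ is bounded by hypothesis, so this term is of the same form as $\alpha^d$ itself but with an extra factor of $d$, and Lemma~\ref{lem:alpha1}'s estimate (or rather its proof) shows it contributes $C d \|\vE^d\|_{L^2(\Omega)}$, hence $C d \|\vJ_a\|_{L^2(\Omega)}$. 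The first term is the delicate one because it involves $\partial_{x_3}\vE^d_T$, which is only a distribution a priori.

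To handle the first term, the key observation is that $\partial_{x_3}$ of the \emph{tangential} components $(E^d_1,E^d_2)$ can be rewritten using the curl: one has $\partial_{x_3}E^d_1 = \partial_{x_1}E^d_3 + (\nabla\times\vE^d)_2$ and $\partial_{x_3}E^d_2 = \partial_{x_2}E^d_3 - (\nabla\times\vE^d)_1$ (up to signs). So the $\partial_{x_3}\vE^d_T$ term splits into a curl part, controlled directly by $\|\nabla\times\vE^d\|_{L^2(\Omega)}$, and a $\partial_{x'}E^d_3$ part. For the latter, I would integrate by parts in $\vx'$ over $\Sigma$ (moving the tangential derivative onto the smooth test function $\vF$, with boundary terms on $\partial\Sigma$), which reduces it again to an expression of the form $\alpha^d$ with $\vF$ replaced by $\na_{x'}\vF$ (bounded by hypothesis) plus a boundary contribution. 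The factor $d$ multiplying $\partial_{x_3}\vE^d$ becomes a factor $d$ multiplying these terms, which is precisely why the estimate \eqref{eq:alpbdder} carries a $d^2$ in front of $\|\na\times\vE^d\|^2$ and $\|\vE^d\|^2_{L^2(\partial\Omega)}$ but only $(1+d^2)$ in front of $\|\vE^d\|^2_{L^2(\Omega)}$.

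More precisely, after these manipulations each piece of $\frac{d\alpha^d}{dt}$ is either (i) of the form treated in Lemma~\ref{lem:alpha1} — a Riemann-type sum of layer integrals of $\vE^d_T$ against a bounded test function — yielding a bound $C\|\vE^d\|_{L^2(\Omega)}$; or (ii) $d$ times such a term involving $\na\times\vE^d$, yielding $Cd\|\na\times\vE^d\|_{L^2(\Omega)}$; or (iii) a trace term on $\partial\Sigma\times\Gamma\subset\partial\Omega$, yielding $Cd\|\vE^d_T\|_{L^2(\partial\Omega)}$. Squaring and summing with Cauchy–Schwarz gives \eqref{eq:alpbdder}, and the final inequality follows by substituting the a priori bound \eqref{eq:estimate} from Proposition~\ref{prop:estimates}, which controls all three of $\|\vE^d\|_{L^2(\Omega)}$, $\|\na\times\vE^d\|_{L^2(\Omega)}$ and $\|\vE^d_T\|_{L^2(\partial\Omega)}$ by $C\|\vJ_a\|_{L^2(\Omega)}$.

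I expect the main obstacle to be making the distributional identity $\partial_{x_3}E^d_1 = \partial_{x_1}E^d_3 + (\nabla\times\vE^d)_2$ rigorous on a single layer $\Sigma\times\{(k+t)d\}$: since $\vE^d$ only has $H(\curl)$ regularity globally (and moreover jumps across $\Sigma^d$), the restriction to a generic horizontal slice and the subsequent integration by parts in $\vx'$ need to be justified carefully — most cleanly by first establishing the estimate for smooth $\vE^d$ (or on slices away from $\Sigma^d$ where $\vE^d$ is smoother, the jump set being a null set of slices) and then passing to the general case by density, exactly as the duality-bracket interpretation of $\alpha^d(t)$ already anticipates. The bookkeeping of the $\partial\Sigma$ boundary terms and verifying they land in $L^2(\partial\Omega)$ with the correct power of $d$ is the other technical point requiring care.
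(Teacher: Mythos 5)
Your proposal follows essentially the same route as the paper's proof: differentiate $\alpha^d$ in $t$, bound the $\partial_{x_3}\vF$ term directly, rewrite $\partial_{x_3}\vE^d_T$ via the curl identity plus tangential derivatives of $\vE^d_3$, integrate by parts over $\Sigma$ so that the $\partial\Sigma$ boundary term is controlled by $\vE^d_T$ on $\partial\Omega$, and close with Proposition~\ref{prop:estimates} together with a smoothing/density argument, exactly as in the paper. The one point you should state explicitly is that the tangential derivative of the composite test function $\vx'\mapsto\vF(\vx',\cdot;\vx'/d,0)$ produces, by the chain rule, a term $\tfrac1d\nabla_y\vF$ whose factor $1/d$ cancels one power of the $d$ prefactor --- this (covered by your hypothesis $\nabla_y\vF\in L^\infty$) is precisely why the coefficient of $\|\vE^d\|^2_{L^2(\Omega)}$ in \eqref{eq:alpbdder} is $(1+d^2)$ rather than $d^2$, so writing ``$\vF$ replaced by $\nabla_{x'}\vF$, bounded by hypothesis'' glosses over the step that fixes the bookkeeping.
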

\begin{proof}[Proof of Proposition~\ref{prop:limsum}]
  Deferring the proofs of the last two lemmas to the end of the present
  section, we note that Lemma~\ref{lem:alpha2} implies that $\alpha^d(t)$
  is bounded in $C^{1/2}(0,1)$ by virtue of the Sobolev embedding theorem
  and thus the convergence established in Lemma \ref{lem:alpha1} is
  uniform. In particular, $\alpha^0(t)$ is defined pointwise with $$
  \alpha^0(0)=  \int_\Omega\int_{\Sigma_0} (E^{(0)}(\vx,\vy))_T\cdot
  (F(\vx,\vy))_T\, \doy \dx$$ (recall that $\Sigma_0 = Y'\times\{0\}$)  and
  \begin{align*}
    \lim_{d\to 0}\alpha^d(0) =   \int_\Omega\int_{\Sigma_0}
    (E^{(0)}(\vx,\vy))_T\cdot (F(\vx,\vy))_T\, \doy \dx,
  \end{align*}
  which is \eqref{eq:limsum1}. Note also that
  \begin{align*}
    \alpha^d(0)
    & = d \int_{\Sigma^d}  \vE^d_T(\vx) \cdot \vF_T(\vx,\vx/d)\, \dox \leq
    C \left( d \int_{\Sigma^d} | \vF_T(\vx,\vx/d)|^2 \, \dox \right)^{1/2}.
  \end{align*}
  Passing to the limit of this expression (using Lemma  2.4 in
  \cite{Allaire95} and noting that under the assumptions of
  Proposition~\ref{prop:limsum} we have $\vF_T \in W^{1,\infty} (\Omega
  \times Y)\subset C(\Omega\times Y)$), we deduce that
  \begin{align*}
    \alpha^0(0) \leq C \left(  \int_{\Omega}\int_{\Sigma_0} |
    \vF_T(\vx,\vy)|^2 \, \doy dx \right)^{1/2}.
  \end{align*}
  It follows that $(E^{(0)}(\vx,\vy))_T\in    L^2(\Omega\times \Sigma_0)$
  and thus $\na_y\varphi(\vx,\vy)_T \in L^2(\Omega\times \Sigma_0)$, which
  completes the proof of Proposition \ref{prop:limsum}.
\end{proof}

We return to the task of proving Lemmas \ref{lem:alpha1} and
\ref{lem:alpha2}.
\begin{proof}[Proof of Lemma~\ref{lem:alpha1}]
  Note that to prove \eqref{eq:ineqalpL2}, we can first assume that $\vE^d$
  is smooth enough for the all the integrals below to make sense and
  conclude that \eqref{eq:ineqalpL2} holds by a density argument. We have:
  \begin{align*}
    \int_0^1 |\alpha^d(t)|^2\, dt
    & \leq C\,\| \vF\|^2_{L^\infty }\int_0^1 \left| \sum_{k\in \Gamma^d}
    d\int_\Sigma |\vE^d_T(\vx',(k+t)d)| \, \dx'\right|^2\dt
    \\
    & \leq C\,\| \vF\|^2_{L^\infty } |\tilde \Gamma^d| \int_0^1 \sum_{k\in \Gamma^d}
    d \left|\int_\Sigma | \vE^d_T(\vx',(k+t)d)|\, \dx'\right|^2\dt
    \\
    & \leq C\,\| \vF\|^2_{L^\infty } |\Gamma||\Sigma|\sum_{k\in \Gamma^d} d
    \int_0^1 \int_\Sigma \big| \vE^d_T(\vx',(k+t)d)\big|^2\, \dx' \dt\\
    & \leq C\,\| \vF\|^2_{L^\infty }|\Gamma||\Sigma|\sum_{k\in \Gamma^d}
    \int_{kd}^{(k+1)d} \int_\Sigma \big| \vE^d_T(\vx',x_3)\big|^2\, \dx' \,
    \dx_3
    \\
    & \leq C\,\| \vF\|^2_{L^\infty }|\Gamma||\Sigma| \int_{\tilde \Gamma^d}
    \int_\Sigma \big| \vE^d_T(\vx',x_3)\big|^2\, \dx' \, \dx_3.
  \end{align*}
  Recall that $\tilde\Gamma^d = \bigcup_{k\in \Gamma^d}\big[kd,(k+1)d\big]$
  satisfies $(-L+d,L-d)\subset \tilde\Gamma^d\subset \Gamma = (-L,L)$. This
  implies \eqref{eq:ineqalpL2} and using Proposition~\ref{prop:estimates} we get
  \begin{align*}
    \|\alpha^d\|_{L^2(0,1)} \leq C \|
    \vF\|_{L^\infty } \| \vE^d\|_{L^2(\Omega)} \leq C\|
    \vF\|_{L^\infty }\,
    \|\vJ_a\|_{L^2(\Omega)}.
  \end{align*}
  The last inequality entails in particular that, up to another
  subsequence, $\alpha^d$ converges weakly in $L^2(0,1)$ to a function
  $\alpha^0$. Furthermore, for any $1$-periodic test function
  $\varphi:\R\to \C$, we assert that
  \begin{align*}
    \int_0^1 \alpha^d(t)\varphi(t)\dt &  = \sum_{k\in \Gamma^d} d
    \int_0^1 \int_\Sigma   \vE^d_T(\vx',(k+t)d) \cdot
    \vF_T(\vx',(k+t)d;\vx'/d,0)\varphi(t) \, \dx' \dt
    \\
    &=
    \sum_{k\in \Gamma^d} \int_{kd}^{(k+1)d} \int_\Sigma \vE^d_T(\vx',x_3)
    \cdot\vF_T(\vx',x_3;\vx'/d,0)\varphi(x_3/d) \, \dx' \, \dx_3
    \\
    &=
    \int_{\tilde \Gamma^d} \int_\Sigma\vE^d_T(\vx',x_3)
    \cdot\vF_T(\vx',x_3;\vx'/d,0)\varphi(x_3/d) \, \dx' \, \dx_3
    \\
    &=
    \int_{\Sigma\times \tilde \Gamma^d}\vE^d_T(\vx) \cdot
    \vF_T(\vx;\vx'/d,0)\varphi(x_3/d)\,\dx.
  \end{align*}
  Using the definition of two-scale convergence and \eqref{eq:Gamma}, we
  see that
  \begin{align*}
    \lim_{d\to 0} \int_0^1 \alpha^d(t)\varphi(t)\dt & = \int_{\Omega}
    \int_Y \vE^{(0)}_T(\vx,\vy) \cdot \vF_T(\vx;\vy',0)\,\varphi(y_3)\,\dy
    \,\dx
    \\
    &=
    \int_0^1\left( \int_{\Omega} \int_{Y'} \vE^{(0)}_T(\vx,\vy) \cdot
    \vF_T(\vx;\vy',0) \, \dy' \, \dx\right)\, \varphi(y_3)\, \dy_3.
  \end{align*}
  The uniqueness of the limit implies that the
  whole original subsequence converges to $\alpha^0$, which completes the proof.
\end{proof}

We conclude this section with the proof of Lemma \ref{lem:alpha2}. We note
that this proof is the only instance in which the {\em special geometry} of
our framework (the fact that $\Omega = \Sigma\times(-L,L)$ where $\Sigma$
is a flat hypersurface) plays a significant role. A generalization of this
result to geometries with non-flat hypersurfaces is given in the
\ref{app:generalization}.
\begin{proof}[Proof of Lemma~\ref{lem:alpha2}]
  We prove \eqref{eq:alpbdder} by first assuming that $\vE^d$ is smooth
  enough for the all the integrals below to make sense (and conclude that
  \eqref{eq:alpbdder}  holds for our $\vE^d$ by a density argument). We
  start with the formula
  \begin{align*}
    \frac{\text{d}\alpha^d}{\text{d}t}(t) & = \sum_{k\in \Gamma^d}
    d^2\int_\Sigma \pa_{x_3} \vE^d_T(\vx',(k+t)d) \cdot
    \vF_T(\vx',(k+t)d;\vx'/d,0) \, \dx'
    \\
    &\qquad\qquad
    + \sum_{k\in \Gamma^d} d^2\int_\Sigma \vE^d_T(\vx',(k+t)d) \cdot
    [\pa_{x_3} \vF_T] (\vx',(k+t)d;\vx'/d,0) \, \dx'
    \\
    &=:\beta_1(t)+\beta_2(t).
  \end{align*}
  Now consider the second term, $\beta_2(t)$. Using the fact that
  $\pa_{x_3} \vF\in L^\infty$, we have
  \begin{align*}
    \int_0^1 |\beta_2(t)|^2\, dt & \leq \| \pa_{x_3} \vF\|_{L^\infty}^{2} d^2
    \int_0^1 \left( \sum_{k\in \Gamma^d} d \int_\Sigma \big|
    \vE^d_T(\vx',(k+t)d)\big| \, \dx'\right)^2\, \dt
    \\
    & \leq \| \pa_{x_3} \vF\|_{L^\infty}^{2}\, |\Gamma| d^2 \int_0^1 \sum_{k\in
    \Gamma^d} d \left(\int_\Sigma \big| \vE^d_T(\vx',(k+t)d)\big| \,
    \dx'\right)^2\, \dt
    \\
    & \leq \| \pa_{x_3} \vF\|_{L^\infty}^{2}\, |\Gamma|\, d^2 |\Sigma| \int_0^1
    \sum_{k\in \Gamma^d} d\int_\Sigma \big| \vE^d_T(\vx',(k+t)d)\big|^2 \,
    \dx' \, \dt
    \\
    & \leq \| \pa_{x_3} \vF\|_{L^\infty}^{2}\, |\Gamma|\, d^2 |\Sigma|
    \sum_{k\in \Gamma^d} d \int_\Sigma \int_0^1 \big|
    \vE^d_T(\vx',(k+t)d)\big|^2\, \dt \, \dx'
    \\
    & \leq \| \pa_{x_3} \vF\|_{L^\infty}^{2}\, |\Gamma|\, d^2 |\Sigma|
    \sum_{k\in \Gamma^d} \int_{kd}^{(k+1)d} \int_\Sigma \big|
    \vE^d_T(\vx',x_3)\big|^2\, \dx_3 \, \dx'
    \\
    & \leq \| \pa_{x_3} \vF\|_{L^\infty}^{2}\, |\Gamma|\, d^2 |\Sigma|
    \int_\Gamma \int_\Sigma \big| \vE^d_T(\vx',x_3)\big|^2\, \dx' \, \dx_3
    \\
    & \leq|\Omega|   d^2 \| \pa_{x_3} \vF\|_{L^\infty}^{2} \| \vE^d\|^2_{{L^2(\Omega)}}.
  \end{align*}
  To determine a bound for $\beta_1(t)$, we use the fact that the
  derivative $\pa_{x_3} \vE^d_T$ is a combination of $\na\times\vE^d$ and
  $\na_T \vE^d_3$. After expanding the dot product in $\beta_1(t)$, we end
  up with two similar terms (involving $\pa_{x_3} \vE^d_1\, \vF_1$ and
  $\pa_{x_3} \vE^d_2 \, \vF_2$ respectively), and we will find a bound for
  the first one only (the second term is handled in the same way):
  \begin{align*}
    \beta_{11}(t)& :=\sum_{k\in \Gamma^d} d^2\int_\Sigma \pa_{x_3}
    \vE^d_1(\vx',(k+t)d) \vF_1(\vx',(k+t)d,\vx'/d,0) \, \dx'
    \\
    &=
    \sum_{k\in \Gamma^d} d^2\int_\Sigma \pa_{x_3} \vE^d_1(\vx',(k+t)d)\,
    w^d_k(\vx',t) \, \dx',
  \end{align*}
  where $w^d_k(\vx',t) = \vF_1(\vx',(k+t)d,\vx'/d,0).$ Using the
  definition of the curl and integration by parts once, we can then write
  \begin{align*}
    \beta_{11}(t)& =\sum_{k\in \Gamma^d} d^2\int_\Sigma (\na\times
    \vE^d)_2(\vx',(k+t)d) w^d_k(\vx',t) \, \dx'
    \\
    &\qquad\qquad\qquad
    + \sum_{k\in \Gamma^d} d^2\int_\Sigma \pa_{x_1} \vE^d_3(\vx',(k+t)d)
    w^d_k(\vx',t) \, \dx'
    \\
    & =\sum_{k\in \Gamma^d} d^2\int_\Sigma (\na\times \vE^d)_2(\vx',(k+t)d)
    w^d_k(\vx',t) \, \dx'
    \\
    &\qquad\qquad\qquad
    - \sum_{k\in \Gamma^d} d^2\int_\Sigma \vE^d_3(\vx',(k+t)d)\pa_{x_1}
    \big[ w^d_k(\vx',t) \big]_{\Sigma} \dx'
    \\
    &\qquad\qquad\qquad\qquad\qquad\qquad
    + \sum_{k\in \Gamma^d} d^2\int_{\pa \Sigma}\vE^d_3(\vx',(k+t)d)
    w^d_k(\vx',t)\,\vn_1\, \dox'.
  \end{align*}
  By using the estimates
  \begin{align*}
    \|w^d_k(\vx',t)\|_{L^\infty}\leq \| \vF_1\|_{L^\infty},
    \quad\text{and}\quad
    \|\pa_{x_1}w^d_k(\vx',t)\|_{L^\infty}\leq \| \na_x
    \vF_1\|_{L^\infty}+\frac{1}{d}\| \na_y\vF_1\|_{L^\infty},
  \end{align*}
  we conclude that (proceeding similarly to the case with the bound for
  $\beta_2(t)$ above)
  \begin{multline*}
    \int_0^1|\beta_{11}(t)|^2\, \dt \leq C d^2 |\Gamma| |\Sigma|
    \int_\Gamma \int_\Sigma | (\na\times \vE^d)_2 (\vx',x_3)|^2\, \dx' \,
    \dx_3
    \\
    + (Cd^2+C) |\Gamma| |\Sigma| \int_\Gamma \int_\Sigma |
    \vE^d_3(\vx',x_3)|^2\, \dx' \, \dx_3
    \\
    + C d^2 |\Gamma| |\pa \Sigma| \int_\Gamma \int_{\pa \Sigma} |
    \vE^d_3(\vx',x_3)\nu_1|^2\, \dox' \, \dx_3.
  \end{multline*}
  We notice that $\vE^d_3$ is part of $\vE^d_T$ on the boundary
  $\pa\Sigma\times\Gamma\subset \pa\Omega$; thus, we have
  \begin{align*}
    \int_0^1|\beta_{11}(t)|^2\, \dt & \leq C|\Omega| d^2 \| \na\times \vE^d\|^2_{L^2(\Omega)}
    +C |\pa\Omega| d^2\|\vE^d\|^2_{L^2(\pa\Omega)}\\
    & \qquad + C |\Omega| (1+d^2) \| \vE^d\|^2_{L^2(\Omega)}.
  \end{align*}
  By combining these estimates, we obtain \eqref{eq:alpbdder}. The last
  inequality in Lemma~\ref{lem:alpha2} then follows from
  Proposition~\ref{prop:estimates}.
\end{proof}

\subsection{The cell problem and proof of Theorem \ref{thm:hom}}
\label{subsec:cell-hom}

We now turn our attention to the cell problem \eqref{eq:h3}. We will prove
the following proposition:
\begin{proposition}
  \label{prop:cell}
  Given $\vME(x)\in L^2(\Omega ;\C^3)$, the cell problem
  \eqref{eq:cellweak0} has a unique solution $\varphi(\vx,\vy) $ satisfying
  $\varphi \in L^2(\Omega;H^1_\#(Y;\C^3))$, and $(\na_y \varphi)_T \in
  L^2(\Omega\times\Sigma_0)$. Furthermore, we can write
  \begin{align}\label{eq:vphi}
    \varphi(\vx,\vy) = \sum_{j=1}^3\vec \chi_j(\vx,\vy) \vME_j(\vx),
  \end{align}
  where for a.\,e. $\vx\in\Omega$, $\vy\mapsto \vec \chi_j(\vx,\vy)$ is the
  unique solution in
  \begin{align*}
    H=\big\{u\in H^1_\# (Y;\C^3)\,;\, (\na_y u)_T\in L^2(\Sigma_0)\big\}
  \end{align*}
  of
  \begin{equation}\label{eq:cell}
    \begin{cases}
      \begin{aligned}
        & \nabla_y \cdot \big(i \omega \e(\vx,\vy) (\ve_j +\nabla_{y}
        \vec \chi_j(\vx,\vy)\big) = 0
        &\quad\text{in } Y\setminus \Sigma_0,
        \\[0.3em]
        & \big[i \omega \e(\vx,\vy) (\ve_j +\nabla_{y} \vec \chi_j(\vx,\vy)) \cdot
        \nu\big]_{\Sigma_0}
        &
        \\
        &\qquad\qquad\qquad
        \;= \nabla_T\cdot \big(\sigma(\vx,\vy) (\ve_j +\nabla_y
       \vec  \chi_j(\vx,\vy))_T\big)
       &\quad\text{on } \Sigma_0,
      \end{aligned}
    \end{cases}
  \end{equation}
  and satisfies $\chi_j \in L^\infty(\Omega;H)$.
 \end{proposition}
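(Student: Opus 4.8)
The plan is to solve the cell problem separately for a.e.\ fixed $\vx\in\Omega$ by the Lax--Milgram theorem (Theorem~\ref{thm:LM}), then restore the $\vx$-dependence, and finally read off the representation~\eqref{eq:vphi} from linearity. First, since in \eqref{eq:cellweak0} the factor $\overline u(\vx)$ ranges over a dense subset of $L^2(\Omega)$, that identity is equivalent to demanding that \eqref{eq:h3} hold for a.e.\ $\vx$; it therefore suffices to study, for fixed $\vx$, the problem of finding $\varphi(\vx,\cdot)$ in the space $H$ of the statement (normalized so that $\int_Y\varphi(\vx,\vy)\,\dy=0$, which removes the only freedom, as only $\na_y\varphi$ enters). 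On this zero-mean subspace, the Poincar\'e--Wirtinger inequality shows that $\|u\|_H^2:=\|\na_y u\|_{L^2(Y)}^2+\|(\na_y u)_T\|_{L^2(\Sigma_0)}^2$ is an equivalent norm, and $H$ is complete for it; one must keep in mind here that $(\na_y u)_T|_{\Sigma_0}$ is \emph{not} controlled by the $H^1_\#$-norm (a priori it lies only in $H^{-1/2}(\Sigma_0)$), so that $H$ is strictly smaller than $H^1_\#(Y;\C)$ and the square-integrability of the tangential gradient on $\Sigma_0$ is part of the definition of the space.

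Next, on $H\times H$ I would introduce the sesquilinear form
\[
  b_{\vx}(u,v) = i\omega\int_Y\e(\vx,\vy)\,\na_y u\cdot\na_y\overline v\,\dy
  - \int_{\Sigma_0}\sigma(\vx,\vy)\,(\na_y u)_T\cdot(\na_T\overline v)\,\doy
\]
and the (antilinear) functional $L_{\vx}(v) = -i\omega\int_Y\e\,\vME(\vx)\cdot\na_y\overline v\,\dy + \int_{\Sigma_0}\sigma\,\vME(\vx)_T\cdot\na_T\overline v\,\doy$, so that \eqref{eq:h3} becomes $b_\vx(\varphi(\vx,\cdot),v)=L_\vx(v)$ for all $v\in H$. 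The bounds $|\e|,|\sigma|\le C$ in \eqref{eq:hyp3} give at once $|b_\vx(u,v)|\le C\|u\|_H\|v\|_H$ and $|L_\vx(v)|\le C|\vME(\vx)|\,\|v\|_H$, while $\Im\e\ge c$ and $\Re\sigma\ge c$ yield
\[
  -\Re b_\vx(u,u)= \omega\int_Y(\Im\e)\,|\na_y u|^2\,\dy
  + \int_{\Sigma_0}(\Re\sigma)\,|(\na_y u)_T|^2\,\doy
  \;\ge\; c\,\min(1,\omega)\,\|u\|_H^2 .
\]
Thus $-b_\vx$ is bounded and coercive on $H$, and applying Theorem~\ref{thm:LM} to $-b_\vx$ with data $-L_\vx$ produces, for a.e.\ $\vx$, a unique $\varphi(\vx,\cdot)\in H$ solving \eqref{eq:h3}, together with the a priori estimate $\|\varphi(\vx,\cdot)\|_H\le C|\vME(\vx)|$. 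Integrating by parts on each side of $\Sigma_0$ and using periodicity recovers the classical formulation \eqref{eq:cell} with $\vME$ in place of $\ve_j$.

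To pass from pointwise solvability back to the statement, note that for each fixed $v\in H$ the maps $\vx\mapsto b_\vx(u,v)$ and $\vx\mapsto L_\vx(v)$ are measurable; carrying out the Lax--Milgram (equivalently, Galerkin) construction along a fixed countable dense subset of $H$ then shows that $\vx\mapsto\varphi(\vx,\cdot)$ is strongly measurable with values in $H$. Combined with $\|\varphi(\vx,\cdot)\|_H\le C|\vME(\vx)|$ and $\vME\in L^2(\Omega;\C^3)$, this gives $\varphi\in L^2(\Omega;H^1_\#(Y;\C))$ and in particular $(\na_y\varphi)_T\in L^2(\Omega\times\Sigma_0)$, as claimed.

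Finally, the representation~\eqref{eq:vphi} follows from linearity: for a.e.\ $\vx$ the solution operator $\vME(\vx)\mapsto\varphi(\vx,\cdot)$ is linear and bounded from $\C^3$ into $H$, so writing $\vME(\vx)=\sum_{j=1}^3\vME_j(\vx)\ve_j$ gives $\varphi(\vx,\cdot)=\sum_{j}\vME_j(\vx)\,\vec\chi_j(\vx,\cdot)$, where $\vec\chi_j(\vx,\cdot)\in H$ is the unique solution of $b_\vx(\vec\chi_j,v)=-i\omega\int_Y\e\,\ve_j\cdot\na_y\overline v\,\dy+\int_{\Sigma_0}\sigma\,(\ve_j)_T\cdot\na_T\overline v\,\doy$ for all $v\in H$ --- i.e.\ precisely the weak form of \eqref{eq:cell}. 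Since the right-hand side of this identity is bounded by $C\|v\|_H$ with $C$ depending only on the $L^\infty$-bounds in \eqref{eq:hyp3} and \emph{not} on $\vx$, the uniform bound $\|\vec\chi_j(\vx,\cdot)\|_H\le C$ holds for a.e.\ $\vx$, which together with the measurability just established yields $\vec\chi_j\in L^\infty(\Omega;H)$ and completes the proof. The only genuinely non-routine ingredient is the choice of function space: the surface term must be controlled from below by exactly the part of the $H$-norm that is invisible to $H^1_\#$, which is what the coercivity lower bound on $\Re\sigma$ in \eqref{eq:hyp3} delivers; the measurable dependence on $\vx$ (where $\e$ is only bounded and measurable) is the other point requiring the standard, but non-trivial, care.
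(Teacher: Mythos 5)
Your proof is correct and follows essentially the same route as the paper: for a.e.\ fixed $\vx$ you apply Lax--Milgram on $H$ to the same sesquilinear form (up to an overall sign), with coercivity supplied by $\Im\e\ge c$ and $\Re\sigma\ge c$ from \eqref{eq:hyp3}, and then obtain \eqref{eq:vphi} and the $L^\infty(\Omega;H)$ bound from linearity and the $\vx$-independent constants. Your extra care with the zero-mean normalization (making $\|\cdot\|_H$ a genuine norm, so uniqueness is unambiguous) and with the measurability of $\vx\mapsto\varphi(\vx,\cdot)$ fills in details the paper leaves implicit, but does not change the argument.
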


\begin{proof}
  We first prove the existence and uniqueness of $\vec\chi_j$ for $j=1,2,3$.
  This implies the existence of $\varphi(\vx,\vy)$ given by \eqref{eq:vphi}.
  The uniqueness of $\varphi$ can be  proved with exactly the same procedure
  as that for $\vec \chi_j$.

  The weak formulation of \eqref{eq:cell} reads
  \begin{multline}\label{eq:cellweak}
    \int_Y i \omega \e(\vx,\vy) (\ve_j +\nabla_y \chi_j(\vx,\vy)) \cdot
    \nabla_y \overline v(\vy)\,\dy\,
    \\
    = \int_{\Sigma_0} \sigma(\vx,\vy) (\ve_j +\nabla_y
    \chi_j(\vx,\vy))_T\cdot \nabla_T \overline v (\vy) \, \doy.
  \end{multline}
  We note that $\vx$ plays the role of a parameter here. Thus, for a fixed
  $\vx\in \Omega$, we  find the function $\vy\mapsto \chi_j(\vx,\vy)$ by
  solving
  \begin{equation}
    \label{eq:cell2}
    b_{\vx}(\chi_j(x,\cdot),v) = \int_Y i \omega \e(\vx,\vy) \ve_j \cdot
    \nabla_y \overline v(\vy)\,\dy- \int_{\Sigma_0} \sigma(\vx,\vy) (\ve_j)
    _T\cdot \nabla_T \overline v (\vy) \, \doy,
  \end{equation}
  with the sesquilinear form $b_x$ defined by
  \begin{align*}
    b_{\vx}(u,v): = \int_Y (-i \omega \e(\vx,\vy)) \nabla_y u(\vy) \cdot
    \nabla_y \overline v(\vy)\,\dy +\int_{\Sigma_0} \sigma(\vx,\vy)\nabla_T u
    (\vy)\cdot \nabla_T \overline v (\vy) \, \doy,
  \end{align*}
  for all functions $u(\vy)$ and $v(\vy)$ in $H$. Under the assumptions of
  Theorem \ref{thm:hom}, the form $b_x$ is continuous and coercive on $H$
  equipped with the norm
  \begin{align*}
    \| u\|_{H}^2 = \int_Y | \nabla_y u(\vy)|^2 \,\dy +\int_{\Sigma_0}
    |\nabla_T u (\vy)|^2 \, \doy.
  \end{align*}
  In particular, the coercivity follows from assumption \eqref{eq:hyp3}:
  \begin{multline}
    \label{eq:coerb}
    \Re (b_{\vx}(u,u))
    =\int_Y \Im \e (\vx,\vy)| \nabla_y u(\vy)|^2 \,\dy +\int_{\Sigma_0} \Re
    \sigma(\vx,\vy) |\nabla_T u (\vy)|^2 \, \doy
    \\
    \geq c \| u\|_{H}^2.
  \end{multline}
  The existence and uniqueness of $\chi_j$ thus follows by virtue of the
  Lax-Milgram theorem~\ref{thm:LM}. The bound in $L^\infty(\Omega;H)$
  follows from \eqref{eq:coerb}.
\end{proof}

We are finally in a
position to prove our main homogenization result:
\begin{proof}[Proof of Theorem~\ref{thm:hom}]
  Let $d_l$ be any subsequence such that $\vE^{d_l}$ and $\vH^{d_l}$
  two-scale converge as  in Lemma~\ref{lem:ts}. Then, using
  Proposition~\ref{prop:hom} we see that $\vE^{d_l}$ and $\vH^{d_l}$
  converge weakly in $L^2(\Omega;\C^3)$ to $\vME(x) \in X^0$ and
  $\vMH(x)\in L^2(\Omega;\C^3)$. Furthermore, by inserting the
  representation \eqref{eq:vphi}, which  holds true by virtue of
  Proposition~\ref{prop:cell}, into the weak formulation \eqref{eq:h1} of
  Proposition~\ref{prop:hom}, we conclude that $\vME$ solves the
  homogenized equation \eqref{eq:weaka0}.
  The uniqueness of this limit, provided by Theorem~\ref{thm:existence0},
  then implies that the whole sequences $\vE^d(\vx)$ and $\vH^d(\vx)$ are
  in fact convergent thus proving Theorem~\ref{thm:hom}. Note that the fact
  that the corrector $\vec\chi(\vx,\vy)$ solves \eqref{eq:cell-intro} is an
  immediate consequence of Proposition~\ref{prop:cell}.
\end{proof}


\section{Conclusion and discussion}
\label{sec:conclusion}

In this paper, we rigorously derived an effective description for
electromagnetic wave propagation in a plasmonic crystal consisting of
metallic sheets immersed in a non-magnetic dielectric medium. The main
result of our analysis is a formula for the macroscopic dielectric
permittivity, $\eff$, that combines a \emph{bulk average} pertaining to the
microstructure of the ambient medium and a \emph{surface average} that
takes into account the surface conductivity of each sheet. The accompanying
corrector field is subject to a cell problem in which the divergence of the
(microscale) dielectric permittivity enters as forcing along with a jump
condition across the sheet that is proportional to the surface conductivity
and involves the surface Laplacian of the corrector. In our analysis, we
made use of the well-known notion of two-scale convergence
from~\cite{allaire,nguetseng}.

It is worthwhile to compare our approach and main result to the ones
in~\cite{amirat2017}. Although that work (\cite{amirat2017}) reports a
similar result for the effective permittivity, the geometric setting (in
the context of geophysics) in~\cite{amirat2017} is different from ours. The
mathematical formulations bear a resemblance; the respective proofs,
however, are quite different. In~\cite{amirat2017}, a key tool is the
generalization of the notion of two-scale convergence to functions defined
on periodic surfaces \cite{Neuss-Radu, Allaire95}. This immediately implies
the two-scale convergence of the interfacial currents $(\sigma^d \vec
E^d_T)\delta_{\Sigma^d}$, and the difficulty is to properly identify the
corresponding limit. This part of the proof in~\cite{amirat2017} exploits
in a crucial way the particular geometry of small inclusions as opposed to
the large sheets of our work. (Note, however, that the main ideas
in~\cite{amirat2017} could certainly be adapted to our setting). The proof
that we develop in the present paper does not rely on this notion of
two-scale convergence on surfaces but instead recovers directly the
convergence of the currents to the appropriate term in the sense of
distribution; cf. Proposition \ref{prop:limsum}. This aspect of our work,
and in particular the introduction of the function $\alpha^d(t)$, is close,
in spirit at least, to the unfolding method developed in
\cite{Cioranescu06}.

From a physical viewpoint, the plasmonic structure analyzed here has been
proposed as a type of \emph{metamaterial} that may achieve the
epsilon-near-zero effect. According to this effect, a macroscopic
electromagnetic wave can propagate through the structure almost without any
phase delay. This possibility has been recently predicted for isotropic and
homogeneous metallic sheets hosted by relatively simple, anisotropic
dielectrics  (ambient media) by use of classical solutions to Maxwell's
equations via the Bloch wave theory~\cite{mattheakis2016,maier2018}. Our
analysis here is more general since it relies on intrinsic properties of
Maxwell's equations, without recourse to particular solutions. Thus, our
homogenization result is a promising tool for understanding how the
epsilon-near-zero effect can possibly emerge in a broad class of plasmonic
structures. The implications of our homogenization outcome are the subject
of work in progress.

To link our homogenization result to predictions related to the
epsilon-near-zero effect, e.g.,~\cite{mattheakis2016,maier2018}, consider
cell problem~\eqref{eq:cell-intro} in the simple case with $\nabla_y\cdot
\e(\vx,\vy)\equiv 0$. By this hypothesis, we deduce that the corrector
field must vanish, i.e., $\vec\chi(\vx,\vy)\equiv 0$. Hence,
formula~\eqref{eq:effectiveperm} for $\eff$ reduces to the average
\begin{align}
  \eff =
  \int_Y\e(\vy)\dy
  -\frac1{i\omega}
  \int_\Sigma\big\{\sigma(\vy)P_T
  (I_n)\big\}\doy,
  \label{eq:epsilon-eff-simple}
\end{align}
under the additional, simplifying assumption that the dielectric
permittivity, $\e^d$, of the ambient medium and the surface conductivity,
$\sigma^d$, of each sheet depend only on the fast coordinate of the
problem. For a plasmonic sheet such as doped graphene it is possible to
have $\Im\sigma>0$ and $\Im\sigma\gg
\Re\sigma>0$~\cite{grigorenko2012,cheng14}. Thus, by inspection
of~\eqref{eq:epsilon-eff-simple} one observes that
$\sigma=\sigma(\omega)=\sigma^d(\omega)/d$ can possibly be tuned so that
{\em at least one of the eigenvalues of $\eff$ is close to zero}. This in
turn implies that an electromagnetic wave propagating in the appropriate
direction, determined by the respective eigenvector of $\eff$, may
experience almost no phase delay. For examples in the relatively simple
setting with a diagonal $\e$ and scalar constant $\sigma$, the reader is
referred to~\cite{mattheakis2016,maier2018}.

Specifically, if one chooses $\e(\vy)={\rm
diag}(\e_x(\vy),\e_y(\vy),\e_z(\vy))$ with $\e_x={\rm const}.$,
$\e_y(\vy)=\e_z(\vy)=\e_{z,0}f(y_1)$, $\e_{z,0}={\rm const.}$ and
$\sigma={\rm const.}$ for some positive and bounded function
$f$~\cite{maier2018}, by~\eqref{eq:epsilon-eff-simple} the effective
dielectric permittivity becomes
\begin{equation*}
  \eff={\rm diag}\left(\e_x,\quad
  \e_{z,0}\int_0^1 f(y_1)\,{\rm d}y_1 +i\sigma/\omega,\quad
  \e_{z,0}\int_0^1 f(y_1)\,{\rm d} y_1+i\sigma/\omega\right).
\end{equation*}
Notice that if $\Re\sigma\approx 0$ and $\Im\sigma >0$, the two diagonal
elements of $\eff$ are close to zero if $\omega$ or $d$ is adjusted
so that the following relation holds:
\begin{align*}
  d\approx d_0:=\frac{-i\sigma^d(\omega)}{\omega\e_{z,0}}\left(\int_0^1
  f(y_1)\,{\rm d}y_1\right)^{-1}.
\end{align*}
Note that the quantity $-i\sigma^d/(\omega\e_{z,0})$ is the plasmonic
length, which expresses the scale for the decay of a surface
plasmon-polariton away from the sheet in the case of transverse-magnetic
polarization~\cite{mattheakis2016}. The condition $d\approx d_0$ has
dramatic consequences in the dispersion of macroscopic waves through the
plasmonic structure~\cite{maier2018}.

This discussion points to a few open problems with direct implications in
plasmonics. For instance, it is of interest to define the epsilon-near-zero
effect in situations where the dielectric permittivity of the ambient
medium or the conductivity of the metallic sheet also depend on slow
spatial variables (in isotropic or anisotropic settings). A related issue
is to understand the role of the corrector field if $\nabla_y\cdot
\e(\vx,\vy)\neq 0$. Our assumption that the ambient medium and sheet are
non-magnetic can be deemed as restrictive, and could in principle be
relaxed. In the presence of magnetic media, the homogenized Maxwell
equations may include an effective magnetic permeability, $\mu^{\rm eff}$,
that should combine bulk and surface averages. In fact, the jump condition
across the sheet can be generalized to also include a discontinuity in the
tangential electric field which may be relevant to the magnetoelectric
effect~\cite{Zulicke14}. This and other generalizations can lead to rich
homogenization problems in plasmonics.


\appendix

\renewcommand\thesection{\appendixname\ \Alph{section}} 
\section{Two-scale convergence: A few results}
\label{app:two-scale}
\renewcommand\thesection{\Alph{section}} 
First, we recall the following classical definition and corresponding
theorem~\cite{allaire}.
\begin{definition}
  A sequence $\vu^d$ in $L^2(\Omega;\C^3)$ is said to two-scale converge to
  $\vu^{(0)}\in L^2(\Omega\times Y ;\C^3)$ if
  \begin{equation*}
    \lim_{d\to 0 } \int_\Omega \vu^d (\vx) \cdot \vPsi(\vx,\vx/d)\, \dx
    =
    \int_\Omega\int_Y \vu^{(0)}(\vx,\vy) \cdot \vPsi(\vx,\vy)\, \dy\,\dx,
  \end{equation*}
  for all test functions $\vPsi \in C_0 (\Omega; C_\#(Y;\C^3))$.
\end{definition}
\begin{theorem}
  \label{thm:ts}
  If the sequence $\vu^d$ is bounded in $L^2(\Omega;\C^3)$, then there
  exists a subsequence which two-scale converge to a function
  $u^{(0)}(\vx,\vy)$. Furthermore, the sequence $\vu^d$ weakly converges in
  $L^2(\Omega;\C^3)$ to the function
  \begin{align*}
    \bar u(\vx) =\int_{Y}u^{(0)}(\vx,\vy)\, dy.
  \end{align*}
\end{theorem}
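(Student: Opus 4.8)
The plan is to follow the classical Nguetseng--Allaire compactness argument: the two-scale limit is produced via the Riesz representation theorem applied to a uniformly bounded family of linear functionals on the space of admissible test functions.

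First I would record the elementary a priori bound coming from Cauchy--Schwarz: for every $\vPsi\in C_0(\Omega;C_\#(Y;\C^3))$,
\[
  \Big|\int_\Omega \vu^d(\vx)\cdot\vPsi(\vx,\vx/d)\,\dx\Big|
  \;\le\; \|\vu^d\|_{L^2(\Omega)}\,\Big(\int_\Omega |\vPsi(\vx,\vx/d)|^2\,\dx\Big)^{1/2}.
\]
The crucial technical ingredient --- and, I expect, the only place where genuine care is needed --- is the mean-value property for rapidly oscillating continuous functions, namely $\int_\Omega |\vPsi(\vx,\vx/d)|^2\,\dx \to \int_\Omega\int_Y |\vPsi(\vx,\vy)|^2\,\dy\,\dx$ as $d\to 0$. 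I would prove this by covering $\Omega$ with cubes of side $d$, using the uniform continuity of $\vPsi$ in $\vx$ together with the $Y$-periodicity in $\vy$ to replace the integrand on each cube by its $\vy$-average, and bounding the resulting error by the modulus of continuity of $\vPsi$ (the boundary cubes contribute an $O(d)$ measure that is negligible). Consequently the functionals $L_d(\vPsi):=\int_\Omega \vu^d(\vx)\cdot\vPsi(\vx,\vx/d)\,\dx$ are bounded uniformly in $d$, with $\limsup_{d\to0}|L_d(\vPsi)|\le C\,\|\vPsi\|_{L^2(\Omega\times Y)}$, where $C:=\sup_d\|\vu^d\|_{L^2(\Omega)}<\infty$ by hypothesis.

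Next I would extract the subsequence. The space $C_0(\Omega;C_\#(Y;\C^3))$ is separable, so choosing a countable dense subset and performing a diagonal extraction yields a subsequence (not relabeled) along which $L_d(\vPsi)$ converges for every $\vPsi$ in that subset; the uniform bound then promotes this to convergence of $L_d(\vPsi)$ for all $\vPsi$ in the space, and the limit $L_0(\vPsi):=\lim_d L_d(\vPsi)$ is linear with $|L_0(\vPsi)|\le C\,\|\vPsi\|_{L^2(\Omega\times Y)}$. Since $C_0(\Omega;C_\#(Y;\C^3))$ is dense in $L^2(\Omega\times Y;\C^3)$, $L_0$ extends uniquely to a bounded functional on that Hilbert space, so by the Riesz representation theorem there is $\vu^{(0)}\in L^2(\Omega\times Y;\C^3)$ with $L_0(\vPsi)=\int_\Omega\int_Y \vu^{(0)}(\vx,\vy)\cdot\vPsi(\vx,\vy)\,\dy\,\dx$ for all admissible $\vPsi$; this is exactly two-scale convergence of $\vu^d$ to $\vu^{(0)}$ along the subsequence.

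Finally, for the weak $L^2$ statement I would test with functions that do not oscillate: given $\vPsi\in C_0(\Omega;\C^3)$, apply the two-scale convergence just obtained to the (trivially $Y$-periodic) test function $(\vx,\vy)\mapsto\vPsi(\vx)$ to conclude $\int_\Omega \vu^d\cdot\vPsi\,\dx \to \int_\Omega\int_Y \vu^{(0)}(\vx,\vy)\cdot\vPsi(\vx)\,\dy\,\dx=\int_\Omega \bar\vu(\vx)\cdot\vPsi(\vx)\,\dx$ with $\bar\vu(\vx):=\int_Y \vu^{(0)}(\vx,\vy)\,\dy$. Because $\{\vu^d\}$ is bounded in $L^2(\Omega;\C^3)$ and $C_0(\Omega;\C^3)$ is dense there, a routine density argument upgrades this to $\vu^d\rightharpoonup\bar\vu$ weakly in $L^2(\Omega;\C^3)$, which finishes the proof. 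As noted, the single substantive step is the oscillation lemma of the second paragraph; the remainder is functional-analytic bookkeeping.
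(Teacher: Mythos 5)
Your argument is correct: it is the classical Nguetseng--Allaire compactness proof (uniform bound via Cauchy--Schwarz, the oscillation lemma for admissible test functions, diagonal extraction plus Riesz representation on $L^2(\Omega\times Y;\C^3)$, and non-oscillating test functions for the weak limit). The paper does not prove Theorem~\ref{thm:ts} at all --- it is recalled as a classical result and cited to \cite{allaire,nguetseng} --- so your write-up simply reproduces the standard proof from those references, which is exactly what is needed here.
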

Next, we prove the following lemma which is also relevant to our exposition.
\begin{lemma}
  \label{lem:homf}
  Let $\vf \in L^2_\#(Y;\C^3)$ be such that
  \begin{align*}
    \nabla_y \times \vf (\vx,\vy) = 0
    \quad\text{in } \mathcal D'(\Omega\times Y),\qquad \int_Y
    \vf(\vx,\vy)\, \dy =0.
  \end{align*}
  Then, there exists a scalar function $\varphi(\vx,\vy)\in
  L^2(\Omega;H^1_\#(Y))$ such that
  \begin{align*}
    f(\vx,\vy) = \nabla_y\varphi(\vx,\vy).
  \end{align*}
\end{lemma}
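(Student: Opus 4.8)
The plan is to prove the lemma by Fourier analysis on the torus $Y=[0,1]^3$, treating $\vx$ as a parameter and then recovering joint measurability. As a preliminary step, I would reduce the distributional identity on $\Omega\times Y$ to a pointwise-in-$\vx$ statement: testing $\nabla_y\times\vf=0$ in $\mathcal D'(\Omega\times Y)$ against functions of the separated form $\psi(\vx)\vg(\vy)$ with $\psi\in\mathcal D(\Omega)$ and $\vg\in C^\infty_\#(Y;\C^3)$, and using the separability of $C^\infty_\#(Y;\C^3)$ together with a countable exhausting family for $\psi$, one deduces that for a.e. $\vx\in\Omega$ the map $\vy\mapsto\vf(\vx,\vy)$ lies in $L^2_\#(Y;\C^3)$, is curl-free in $\mathcal D'(Y)$, and has zero mean over $Y$.

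Next, fix such an $\vx$ and expand $\vf(\vx,\cdot)$ in its Fourier series, $\vf(\vx,\vy)=\sum_{\vk\in\Z^3}\hat{\vf}(\vx,\vk)\,e^{2\pi i\vk\cdot\vy}$ with $\hat{\vf}(\vx,\vk)=\int_Y\vf(\vx,\vy)e^{-2\pi i\vk\cdot\vy}\dy\in\C^3$. The zero-mean condition forces $\hat{\vf}(\vx,\vec 0)=0$, and the curl-free condition translates into $\vk\times\hat{\vf}(\vx,\vk)=0$ for every $\vk$, i.e. $\hat{\vf}(\vx,\vk)$ is parallel to $\vk$ whenever $\vk\neq\vec 0$, so that $\hat{\vf}(\vx,\vk)=\big(\vk\cdot\hat{\vf}(\vx,\vk)/|\vk|^2\big)\,\vk$. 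I would then set
\[
  \hat\varphi(\vx,\vk):=\frac{\vk\cdot\hat{\vf}(\vx,\vk)}{2\pi i\,|\vk|^2}\ \ (\vk\neq\vec 0),\qquad \hat\varphi(\vx,\vec 0):=0,
\]
and define $\varphi(\vx,\vy):=\sum_{\vk\in\Z^3}\hat\varphi(\vx,\vk)\,e^{2\pi i\vk\cdot\vy}$. By construction $2\pi i\,\vk\,\hat\varphi(\vx,\vk)=\hat{\vf}(\vx,\vk)$ for every $\vk$, hence $\nabla_y\varphi(\vx,\cdot)=\vf(\vx,\cdot)$ (first term by term, then in $L^2$ since $\vf(\vx,\cdot)\in L^2(Y;\C^3)$).

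For the function-space membership, I would estimate directly on the Fourier side: by Parseval $\|\nabla_y\varphi(\vx,\cdot)\|_{L^2(Y)}=\|\vf(\vx,\cdot)\|_{L^2(Y)}$, while, using $|\vk|\ge 1$ for $\vk\in\Z^3\setminus\{\vec 0\}$ and $|\vk\cdot\hat{\vf}|\le|\vk|\,|\hat{\vf}|$, one gets $\|\varphi(\vx,\cdot)\|_{L^2(Y)}^2=\sum_{\vk\neq\vec 0}|\hat\varphi(\vx,\vk)|^2\le (4\pi^2)^{-1}\|\vf(\vx,\cdot)\|_{L^2(Y)}^2$. Thus $\varphi(\vx,\cdot)\in H^1_\#(Y)$ with $\|\varphi(\vx,\cdot)\|_{H^1_\#(Y)}\le C\|\vf(\vx,\cdot)\|_{L^2(Y)}$ for a.e. $\vx$. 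Measurability of $\vx\mapsto\varphi(\vx,\cdot)$ as an $H^1_\#(Y)$-valued map follows because each coefficient $\vx\mapsto\hat{\vf}(\vx,\vk)$ is measurable (it is the image of $\vf\in L^2(\Omega;L^2_\#(Y;\C^3))$ under a bounded linear functional), hence so is $\hat\varphi(\vx,\vk)$, and the partial sums of the series are measurable and converge; squaring, integrating over $\Omega$, and invoking $\vf\in L^2(\Omega;L^2_\#(Y))$ yields $\varphi\in L^2(\Omega;H^1_\#(Y))$, which finishes the argument.

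The routine Fourier computation is not where the difficulty lies; the main obstacle I anticipate is the bookkeeping required to pass rigorously from the distributional identity on $\Omega\times Y$ to the pointwise-in-$\vx$ curl-free/zero-mean statement, and to secure joint measurability of $\varphi$. Separability of the test-function spaces involved is the tool that makes both steps go through.
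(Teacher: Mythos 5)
Your proposal is correct and follows essentially the same route as the paper's own proof: a Fourier expansion of $\vf(\vx,\cdot)$ on $Y$, using the zero-mean condition to kill the constant mode and the curl-free condition to force $\hat{\vf}(\vx,\vk)\parallel\vk$, then defining $\varphi$ through its Fourier coefficients and concluding by Parseval. The only difference is that you spell out the reduction to a pointwise-in-$\vx$ statement and the joint measurability of $\varphi$, details the paper's proof leaves implicit; these additions are sound.
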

This result is a slight variation of Lemma B.5 in
\cite{wellander2003}. We give the proof for the sake of completeness.
\begin{proof}
  We can write the following Fourier expansion of $\vf$ in $Y$:
  \begin{align*}
    \vf(\vx,\vy) = \sum_{\vk\in\Z^3} \vc_{\vk}(\vx) e^{i2\pi \vk\cdot \vy}.
  \end{align*}
  The conditions on $\vf$ imply that $\vc_\vk\times \vk =0$ for all
  $\vk\in\Z^3$, $\vc_0=0$. In particular, for $\vk\neq 0$, the vector
  $\vc_\vk$ is parallel to $\vk$; and if we define $d_\vk=\frac{\vc_\vk\cdot
  \vk}{i2\pi|\vk|^2}$ then we have $\vc_\vk = (i2\pi) d_l \vk$. This
  in turn implies that the function
  \begin{align*}
    \varphi(\vx,\vy) = \sum_{\vk\in\Z^3\setminus\{0\}} d_l e^{i2\pi
    \vk\cdot \vy}
  \end{align*}
  satisfies
  \begin{align*}
    \na _y \varphi(\vx,\vy) = \sum_{\vk\in\Z^3\setminus\{0\}} i2\pi d_\vk
    \vk e^{i2\pi \vk\cdot \vy} = \vf(\vx,\vy).
  \end{align*}
  Furthermore, $\varphi(\vx,\vy)\in L^2(\Omega;H^1_\#(Y))$ since $\vf\in
  L^2_\#(Y;\C^3)$.
\end{proof}


\renewcommand\thesection{\appendixname\ \Alph{section}} 

\section{General hypersurfaces $\Sigma^d$}

\renewcommand\thesection{\Alph{section}} 
\label{app:generalization}
In this section, we generalize the main result to non-flat hypersurfaces
$\Sigma$. In particular, we show how to prove our main
result~\eqref{eq:effectiveperm}
when the hypersurface $\Sigma$ is not necessarily a plane, but forms the
graph of a smooth $Y'$-periodic function $h$ (for which, for simplicity, we
assume that $-1\leq h\leq 1$). More precisely, we still assume that the
domain $\Omega$ has the form
\begin{align*}
  \Omega = \Sigma' \times \Gamma,
\end{align*}
where $\Sigma'$ is a smooth bounded subset of $\R^2$ and $\Gamma = (-L,L)$.
But we now take
\begin{align*}
  \Sigma^d = \cup_{k\in \Gamma^d} \{ (\vx',dh(\vx'/d)+kd)\; ;\;
  \vx'\in\Sigma'\},
\end{align*}
where $\Gamma^d = \{k\in\Z\, ;\, kd\in(-L+d,L-d)\}$. Note that this
definition of $\Gamma^d$ (and the assumption $-1\leq h\leq 1$) ensures that
$\Sigma^d$ does not intersect  the boundaries $\Sigma'\times\{-L\}$ and
$\Sigma'\times\{L\}$. Finally, we recall that $\Sigma_0$ denotes the graph
of $h$ in $Y$:
\begin{align*}
  \Sigma_0 = \{ (y',h(y)) \, ;\, y'\in Y ' \}.
\end{align*}
%
The only part in the proof of Theorem \ref{thm:hom} that utilized the
particular structure of $\Sigma^d$ was in the proof of Proposition
\ref{prop:limsum}. We will thus show in the following that the result of
Proposition \ref{prop:limsum} still holds in the more general framework
described above. In order to state the corresponding result, we introduce
the matrix $P(\vy')$, which expresses the projection onto the tangent space
of $\Sigma_0$ at the point $(\vy',h(\vy'))$. For $\vx \in \Sigma^d$, we
thus have
\begin{align*}
  \vE^d_T(\vx) = P(\vx'/d) \vE^d(\vx).
\end{align*}
Our goal is then to prove the following proposition.
\begin{proposition}\label{prop:limsumgen}
  Assume that $h\in W_\#^{2,\infty}(Y')$ and recall that $\vE^d$ is bounded
  in $X^d$ and two-scale converges to the function $ \vE^{(0)}(\vx,\vy)$.
  Then, for all functions $\vF(\vx,\vy)$ defined in $\Omega\times Y$ that
  are periodic with respect to $y$ and admit $\vF,\; \na_x \vF,\; \na_y \vF
  \in L^\infty(\Omega\times Y)$, we have
  \begin{multline}
    \lim_{d\to 0} d \int_{\Sigma^d}   \vE^d_T(\vx) \cdot \vF_T(\vx,\vx/d)\,
    \dox
    =
    \\
    \int_\Omega\int_{\Sigma_0} P(\vy) \vE^{(0)}(\vx,\vy)\cdot
    P(\vy)F(\vx,\vy)\, \doy \dx.
  \end{multline}
\end{proposition}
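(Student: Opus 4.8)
The plan is to reduce the claim to the flat case already settled in Proposition~\ref{prop:limsum} by \emph{flattening} the layers $\Sigma^d$ with an explicit shear. Define $\Psi^d(\vx',\widetilde x_3)=\big(\vx',\,\widetilde x_3+d\,h(\vx'/d)\big)$, so that $\Psi^d$ maps the flat layers $\wSigma^{d}:=\bigcup_k\Sigma'\times\{kd\}$ onto $\Sigma^d$. Since $\partial_{x_i}\big(d\,h(\vx'/d)\big)=(\partial_i h)(\vx'/d)$, the Jacobian $M^d(\vx):=D\Psi^d(\vx)$ depends only on $\vx'/d$, is bounded in $L^\infty$ uniformly in $d$ (here $h\in W^{2,\infty}_\#(Y')$ is used), and has $\det M^d\equiv 1$. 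I would then pull $\vE^d$ back as a $1$-form, $\wE^d:=(M^d)^T\,(\vE^d\circ\Psi^d)$, on the domain $\wOmega$ whose image under $\Psi^d$ agrees with $\Omega$ except in an $O(d)$-neighbourhood of $\partial\Omega$ (away from the layers, so harmless). Because $\Psi^d$ is a shear with unit Jacobian, the pullback identity for the exterior derivative gives $\na\times\wE^d=(M^d)^{-1}\big((\na\times\vE^d)\circ\Psi^d\big)$ with \emph{no} extra lower-order terms, whence $\|\wE^d\|_{L^2}\le C\|\vE^d\|_{L^2}$ and $\|\na\times\wE^d\|_{L^2}\le C\|\na\times\vE^d\|_{L^2}$. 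For the surface and boundary traces one uses that a diffeomorphism carrying $\Sigma^d$ onto $\wSigma^d$ transports tangential traces to tangential traces, so $\|\wE^d_T\|_{L^2(\wSigma^d)}$ is controlled, through the uniformly elliptic induced metrics, by $\|\vE^d_T\|_{L^2(\Sigma^d)}$ (and likewise on $\partial\Omega$). Together with Proposition~\ref{prop:estimates}, this makes $\wE^d$ bounded in $X^d(\wOmega)$ uniformly in $d$.

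The key observation is that the proof of Proposition~\ref{prop:limsum} --- the auxiliary function $\alpha^d(t)$ together with Lemmas~\ref{lem:alpha1} and~\ref{lem:alpha2} and the Sobolev embedding argument --- never uses that $\vE^d$ solves Maxwell's equations: it uses only the uniform $X^d$ bound and the \emph{flat} geometry (Lemma~\ref{lem:alpha2} is precisely the place where $\partial_{x_3}\wE^d_T$ is rewritten through $\na\times\wE^d$ and $\na_T\wE^d_3$, which is legitimate because $\wSigma^d$ is flat). Hence that argument applies verbatim to $\wE^d$: writing $\wE^{(0)}=\wME+\na_y\widetilde\varphi$ for its two-scale limit (the decomposition following from the argument of Lemma~\ref{lem:ts}), one gets, for every test function $\widetilde\vF$ with $\widetilde\vF,\na_x\widetilde\vF,\na_y\widetilde\vF\in L^\infty(\wOmega\times Y)$,
\[
 \lim_{d\to0}\,d\!\int_{\wSigma^{d}}\!\wE^d_T(\vx)\cdot\widetilde\vF_T(\vx,\vx/d)\,\dox
 =\int_{\wOmega}\!\int_{\wSigma_0}\!\wE^{(0)}_T(\vx,\vy)\cdot\widetilde\vF_T(\vx,\vy)\,\doy\,\dx ,
\]
with $\wSigma_0=Y'\times\{0\}$. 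Separately, performing the change of variables $\vz=\Psi^d(\vx)$ in the defining relation of two-scale convergence (using $\det M^d\equiv1$, $\Psi^d\vx\to\vz$, and $\Psi^{d,-1}\vz/d=(\vz'/d,\,z_3/d-h(\vz'/d))$) identifies $\wE^{(0)}(\vx,\vy)=M(\vy')^T\,\vE^{(0)}\big(\vx,\vy',y_3+h(\vy')\big)$, where $M(\vy')$ is the shear matrix with third row $(\partial_1 h,\partial_2 h,1)(\vy')$.

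It then remains to match the surface integral of Proposition~\ref{prop:limsumgen} with the flat one. Parametrizing $\Sigma^d$ by $(\vx',d\,h(\vx'/d)+kd)$, using $P=P^T=P^2$ and $\vE^d_T=P\vE^d$, and $\vE^d|_{\Sigma^d}=M^{-T}\wE^d|_{\wSigma^d}$, one obtains on the layers $\vE^d_T\cdot\vF_T=\wE^d\cdot(M^{-1}P\vF)$. The algebraic heart of the matter is that $M^{-1}$ carries the tangent plane of $\Sigma_0$ onto the horizontal plane ($M^{-1}\tau_i=\ve_i$ for $\tau_i=(\delta_{i1},\delta_{i2},\partial_i h)$), so $M^{-1}P\vF$ is horizontal; accounting for the surface element $J\,\dx'$ on $\Sigma^d$ with $J=\sqrt{1+|\nabla h|^2}$ then yields the \emph{exact} identity $d\int_{\Sigma^d}\vE^d_T\cdot\vF_T\,\dox=d\int_{\wSigma^{d}}\wE^d_T\cdot\widetilde\vF^{\,d}_T\,\dox$ with $\widetilde\vF^{\,d}(\vx,\vy):=J(\vy')\,M(\vy')^{-1}P(\vy')\,\vF\big(\vx',x_3+d\,h(\vy'),\vy',y_3+h(\vy')\big)$. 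This $\widetilde\vF^{\,d}$ differs from its $d=0$ version $\widetilde\vF^{\,0}\in W^{1,\infty}(\wOmega\times Y)$ by an $O(d)$ term in $L^\infty$, whose contribution is $O(d)$ by a direct Cauchy--Schwarz bound against $\|\wE^d_T\|_{L^2(\wSigma^d)}$; for $\widetilde\vF^{\,0}$ the flat result of the previous paragraph applies. Finally, inserting the formulas for $\wE^{(0)}$ and $\widetilde\vF^{\,0}$ on $\wSigma_0$, the matrix factors collapse via $(M^T\vu)\cdot(M^{-1}\vv)=\vu\cdot\vv$, and the $\vy'$-integral weighted by $J(\vy')$ is exactly the surface integral over $\Sigma_0$; one arrives at $\int_\Omega\int_{\Sigma_0}P(\vy)\vE^{(0)}(\vx,\vy)\cdot P(\vy)\vF(\vx,\vy)\,\doy\,\dx$, which is the assertion.

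I expect the main obstacle to be the \emph{uniform} bound on $\wE^d$ in $X^d(\wOmega)$, and specifically the control of $\|\wE^d_T\|_{L^2(\wSigma^d)}$: the horizontal components of $M^{-T}\vE^d$ on the layers apparently involve $\vE^d_3|_{\Sigma^d}$, which is only an $H^{-1/2}$ trace and is \emph{not} bounded in $L^2(\Sigma^d)$. The resolution is that the transported object is the tangential trace $1$-form, whose $L^2$ norm in the (uniformly bounded, uniformly elliptic) induced metric of $\Sigma^d$ is equivalent to $\|\vE^d_T\|_{L^2(\Sigma^d)}$, so the apparent $\vE^d_3$ contributions are an artifact of componentwise bookkeeping; making this equivalence precise --- and carefully checking that the construction of $\alpha^d(t)$ and Lemma~\ref{lem:alpha2} transfer to $\wE^d$ with $d$-independent constants --- is the part requiring the most care.
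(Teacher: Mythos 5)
Your proposal is correct and follows essentially the same route as the paper's Appendix B: the flattening shear, the covariant (1-form) pullback $\wE^d=(\nabla\vg^d)^T\,\vE^d\circ\vg^d$ with unit Jacobian, which preserves the $L^2$, curl and tangential-trace bounds (your resolution of the apparent $\vE^d_3$ dependence is exactly the paper's identity $\wE^d_T=M(\vx'/d)\,\vE^d_T\circ\vg^d$), and the reduction to the flat-layer machinery of Proposition~\ref{prop:limsum}. The only organizational difference is that the paper re-runs the $\alpha^d(t)$ argument with the $d$-dependent transported test function $\wF^d$ and identifies the limit via two-scale convergence of $\vE^d$ after changing variables back, whereas you freeze the test function at $d=0$ (absorbing an $O(d)$ error) and identify the two-scale limit of $\wE^d$ directly; both variants are sound.
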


\begin{proof}
  As in the proof of Proposition \ref{prop:limsum}, the key step is the
  introduction of the following function (defined for $t\in(0,1)$):
  \begin{align}
    \alpha^d(t)
    & =
    d\int_{\Sigma^d} \vE^d_T(\vx',x_3+td) \cdot
    \vF_T(\vx',x_3+td;\vx'/d,x_3/d) \, \dox \label{eq:alphaapp} \\
    & =
    \sum_{k\in \Gamma^d} d
    \int_{\Sigma'}
    P\left(\tfrac{\vx'}{d}\right)\vE^d\left(\vx',(k+t)d +dh(\vx'/d) \right)
    \nonumber \\
    &\qquad
    \cdot P\left(\tfrac{\vx'}{d}\right)\vF \left(\vx',(k+t)d +dh(\vx'/d);\vx'/d,h(\vx'/d)\right)
    \sqrt{1+\left|\na h\left(\tfrac{\vx'}{d}\right)\right|^2} \,  d\vx'.  \nonumber
  \end{align}
  The main difficulty is to derive the appropriate bounds on $\alpha^d$ and
  its derivative (see Lemma   \ref{lem:alpha2}). For this purpose, we
  introduce the diffeomorphisms $\vg: \R^3\to \R^3$ and  $\vg^d: \R^3\to
  \R^3$ defined by $\vg(\vx) := (\vx',h(\vx')+x_3),$ and $\vg^d(\vx) := d
  \vg(\vx/d) = (\vx',dh(\vx'/d)+x_3)$. We have $\Sigma^d =
  \vg^d(\wSigma^d)$, where
  \begin{align*}
    \wSigma^d=\cup_{k\in \Gamma^d}\Sigma'\times \{kd\},
  \end{align*}
  and $\Omega = \vg^d(\wOmega^d)$, where
  \begin{align*}
    \wOmega^d =
    \{(x',x_3)\, ;\, x'\in\Sigma', \; -L-d h(x'/d)\leq x_3 \leq L-d
    h(x'/d)\}
  \end{align*}
  (note that $|\Omega \Delta \wOmega^d|\leq Cd$).
  We also define
  \begin{align*}
    \wE^d (\vx) = \vE^d\big(\vg^d(\vx)\big)\nabla\vg^d(\vx), \qquad x\in
    \wOmega^d
  \end{align*}
  that is, $\wE^d_i(\vx) =
  \sum_{j=1}^3\vE_j\big(\vg^d(\vx)\big)\pa_i\vg^d_j(\vx)$. This is a
  natural definition when $\vE^d$ is the gradient of a potential (that is
  when $\vE^d$ is curl free). We will see below that this change of
  function also preserves the curl estimates that played a crucial role in
  the proof of Lemma \ref{lem:alpha2}.
  More precisely, we will make use of the following properties:
  \begin{enumerate}
    \item
      Since $\nabla\vg^d(\vx)= \nabla\vg(\vx/d)$, we have $\|\pa_i \vg^d_j
      \|_{L^\infty}\leq C$ for all $i$, $j$, independently of $d$.
      Furthermore, a simple computation gives
      \begin{equation}
        \label{eq:jac}
        |\det \nabla\vg^d(\vx)|=1.
      \end{equation}
      In particular, we have (with $d$-independent constants)
      \begin{equation}
        \label{eq:wEL2}
        \int_{\wOmega^d} |\wE^d(\vx)|^2 \, d\vx \leq  C \int_{\Omega}
        |\vE^d(\vx)|^2 \, d\vx \leq C.
      \end{equation}
    \item
      For $\vx\in \wSigma^d$, the projection $\wE^d_T$ onto the tangent
      plane to $\wSigma^d$ only depends on $\vE^d_T$, the projection of
      $\vE^d$ onto the tangent plane to $\Sigma^d$. Indeed we can write
      \begin{align*}
        \wE^d_T(\vx)
        &= (\wE^d_1(\vx),\wE^d_2(\vx),0)^T
        \\
        &= (\pa_1 \vg^d(\vx)\cdot \vE^d\big(\vg^d(\vx)\big),\pa_2
        g^d(\vx)\cdot \vE^d\big(\vg^d(\vx)\big),0 )^T
        \\
        & = (\pa_1 \vg^d(\vx)\cdot \vE^d_T\big(\vg^d(\vx)\big),\pa_2
        \vg^d(\vx)\cdot \vE^d_T\big(\vg^d(\vx)\big),0 )^T.
      \end{align*}
      In the last equality we used the fact that $\pa_1 g^d$ and $\pa_2
      g^d$ are tangent vectors to $\Sigma^d$. Using the fact that the
      vector $(-\pa_1h,-\pa_2h,1)$ is normal to $\Sigma_d$ (it is the
      vector $\pa_1 g^d\times \pa_2 g^d$), we can rewrite this equality as
      \begin{equation}\label{eq:ET}
        \wE^d_T(\vx) = M (\vx'/d) \vE^d_T\big(\vg^d(\vx)\big),
      \end{equation}
      with the matrix
        \begin{align*}
          M(\vx') =
          \begin{pmatrix}
            1 & 0 & \pa_1 h(\vx') \\
            0 & 1 & \pa_2 h(\vx') \\
            -\pa_1 h(\vx') & -\pa_2 h(\vx') & 1
          \end{pmatrix}.
      \end{align*}
      This $M(\vx')$ is smooth and invertible. (The latter attribute can be readily
      deduced from the determinant of $M(\vx')$, which is
      $1+|\pa_1h(\vx'/d)|^2+|\pa_2h(\vx'/d)|^2$). We can also write
      \begin{equation}
        \label{eq:ET2}
        \wE^d_T(\vx) = M (\vx'/d) P(\vx'/d)\vE^d\big(\vg^d(\vx)\big).
      \end{equation}
    \item
      The definition of $g$ immediately gives $\wE^d_3(\vx)= \vE^d_3
      \big(\vg^d(\vx)\big)$ in $\wOmega^d$. Furthermore, since $\vE^d_3$ is
      part of $\vE^d_T$ on $\pa \wSigma\times\Gamma$, using
      \eqref{eq:jac} we conclude that
      \begin{equation}
        \label{eq:E3}
        \int_{\Gamma^{d}} \int_{\Sigma'} |\wE^d_3(\vx',x_3)|^2\,
        d\vx'\, \dx_3 \leq \int_{\Gamma^{d}} \int_{\Sigma}
        |\vE^d_3(\vx',x_3)|^2\, d\vx'\, \dx_3 \leq C.
      \end{equation}
    \item
      The curl of $\wE^d$ only depends on the components of $\na\times
      \vE^d$, and is thus bounded in $L^2$. Indeed, writing $\wE^d (\vx)=
      \sum_{j=1}^3 E_j^d(\vg^d(\vx)) \na \vg^d_j (\vx) $, we find
      \begin{align*}
        \na\times \wE^d (\vx)
        & = \sum_{j=1}^3 \na (\vE_j^d\big(\vg^d(x)\big) \times \na \vg^d_j
        (\vx)
        \\
        &= \sum_{j,l=1}^3 \pa_l \vE_j^d\big(\vg^d(x)\big)\na \vg^d_l(x)
        \times \na \vg^d_j (\vx),
      \end{align*}
      where we used the chain rule $\na (\vE_j (\vg (\vx)) = \sum_{l=1}^3
      \pa_l \vE_j (\vg(\vx)) \na \vg_l(\vx)$. Using the anti-symmetry of
      the cross product, we deduce the relation 
      \begin{align*}
        \na\times \wE^d (\vx) =\frac 1 2 \sum_{j\neq l } \na \vg^d_l(\vx)
        \times \na \vg^d_j (\vx) \Big[\pa_l \vE_j^d (\vg^d(\vx))-\pa_j
        \vE_l^d (\vg^d(\vx))\Big].
      \end{align*}
      We thus see that $\na\times \wE^d (\vx) $ depends in a linear fashion on
      the components of $\na\times \vE^d \big(\vg^d(\vx)\big)$ and, since
      $\|\pa_i \vg^d_j \|_{L^\infty}\leq C$, we obtain 
      \begin{align*}
        | \na\times \wE^d (\vx) |^2 \leq C |\na\times \vE^d (\vg^d(\vx))|^2.
      \end{align*}
      In particular, \eqref{eq:jac} implies the estimates 
      \begin{equation}
        \label{eq:curl}
        \int_{\wOmega^d} | \na\times \wE^d (\vx) |^2  \, d\vx \leq  C
        \int_\Omega  |\na\times \vE^d (\vx)|^2\, d\vx
        \leq C.
      \end{equation}
  \end{enumerate}

  We are now ready to prove Proposition  \ref{prop:limsumgen}. Using the
  change of variable introduced above, we can rewrite the function
  $\alpha^d(t)$ in  a form similar to the one  appearing in Proposition
  \ref{prop:limsum}. Indeed, using \eqref{eq:alphaapp} and \eqref{eq:ET},  we
  find that 
  \begin{align*}
    \alpha^d(t)
    =
    \sum_{k\in \Gamma^d} d\int_{\Sigma'} \wE^d_T(\vx',(k+t)d) \cdot
    \wF^d_T(\vx',(k+t)d;\vx'/d,0) \, \dx',
    \end{align*}
  where
  \begin{align*}
  \wF_T^d(\vx,\vy)
    &=
    (M(\vy')^{-1})^T \vF_T(\vg^d(\vx),\vg(\vy))\sqrt{1+|\na h(\vy')|^2}
    \\
    &=
    (M(\vy')^{-1})^T \vF_T(\vx',d h(\vy')+x_3,\vg(\vy))\,\sqrt{1+|\na
    h(\vy')|^2}.
  \end{align*}
  In order to finalize this step, we only need to show that the results of
  Lemma \ref{lem:alpha1} and Lemma \ref{lem:alpha2} hold in our framework.
  The proof of Lemma \ref{lem:alpha2} requires only appropriate bounds on
  $\wE^d$ and $\wF^d$. In particular, we realize that $\wF^d$ satisfies
  \begin{align*}
    \|\wF^d\|_{L^\infty(\Omega\times Y)} \leq C, \quad \|\na_x
    \wF^d\|_{L^\infty(\Omega\times Y)} \leq C, \quad
    \|\na_y\wF^d\|_{L^\infty(\Omega\times Y)} \leq C,
  \end{align*}
  (with constant $C$ independent of $d$) which, together with the bounds
  \eqref{eq:wEL2}, \eqref{eq:E3} and \eqref{eq:curl} are all that we need
  to prove Lemma \ref{lem:alpha2}. These same bounds are also sufficient to
  show that $\alpha^d(t)$ is bounded in $L^2(0,1)$. To prove Lemma
  \ref{lem:alpha1}, we therefore only need to identify the limit of
  $\int_0^1 \alpha^d(t)\varphi(t)\dt$. Using \eqref{eq:ET2} and
  \eqref{eq:alphaapp}, we write
  \begin{align*}
    \int_0^1 \alpha^d(t)\varphi(t)\dt
    &=
    \int_{\Sigma\times \tilde \Gamma^d}M \left(\tfrac{\vx'}{d}\right)
    P\left(\tfrac{\vx'}{d}\right)\vE^d\left(\vx',d
    h\left(\tfrac{\vx'}{d}\right)+x_3\right)
    \\
    &\qquad
    \cdot\Big(M\left(\tfrac{\vx'}{d}\right)^{-1}\Big)^T \vF_T
    \left(\vx',d h\left(\tfrac{\vx'}{d}\right)
    +x_3,\vg\left(\tfrac{\vx'}{d},0\right)\right) \varphi(x_3/d)\,
    \\
    &\qquad\qquad \qquad\qquad \qquad\qquad \qquad\qquad \qquad\qquad
    \sqrt{1+\left|\na h\left(\tfrac{\vx'}{d}\right)\right|^2} d\vx
    \\
    &=
    \int_{\Omega'} P\left(\tfrac{\vx'}{d}\right)\vE^d\left(\vx',x_3\right)
    \cdot P\left(\tfrac{\vx'}{d}\right)\vF \left(\vx',x_3,\vg
    \left(\tfrac{\vx'}{d},0\right)\right)
    \\
    &\qquad\qquad \qquad\qquad \qquad\qquad
    \varphi\left(\tfrac{x_3}{d}-h\left(\tfrac{\vx'}{d}\right)\right)
    \sqrt{1+\left|\na h\left(\tfrac{\vx'}{d}\right)\right|^2} \,d\vx
  \end{align*}
  with $\tilde\Gamma ^d= \cup_{k\in\Gamma^d} [kd,(k+1)d] = [d k_0,d k_1]$
  and $\Omega' = \{(\vx',x_3)\,;\,\vx'\in\Sigma',\;  d k_0 + d h(\vx'/d)
  <x_3<d k_1 + d h(\vx'/d)\}$. The usual properties of two-scale
  convergence imply that the above expression converges to
  \begin{multline*}
    \int_{\Omega}\int_Y P(\vy')\vE^{(0)}(\vx,\vy) \cdot
    P(\vy')\vF \left(\vx,\vg\left(\vy',0\right)\right)
    \varphi(y_3-h\left(\vy'\right)) \sqrt{1+\left|\na
    h\left(\vy'\right)\right|^2} \,d\vy\, d\vx
    \\
    = \int_{\Omega}\int_Y P(\vy')\vE^{(0)}(\vx,\vy)\cdot P(\vy')\vF
    \left(\vx,\vy',h(\vy')\right)
    \\
    \varphi(y_3-h\left(\vy'\right))
    \sqrt{1+\left|\na h\left(\vy'\right)\right|^2} \,d\vy\, d\vx
    \\
    = \int_{\Omega}\int_Y P(\vy')\vE^{(0)}(\vx,\vy',y_3+h(\vy'))\cdot
    P(\vy')\vF \left(\vx,\vy',h(\vy')\right)
    \\
    \varphi(y_3) \sqrt{1+\left|\na h\left(\vy'\right)\right|^2}
    \,d\vy\,d\vx.
  \end{multline*}
  It follows that $\alpha^d(t)$ converges $L^2(0,1)$ weakly to
  \begin{align*}
    \alpha^0(t)  &=  \int_{\Omega}\int_Y P(\vy')\vE^{(0)}(\vx,\vy',t+h(\vy'))\cdot
    P(\vy')\vF \left(\vx,\vy',h(\vy')\right)\notag\\
    &\qquad \times \sqrt{1+\left|\na h\left(\vy'\right)\right|^2}
    \,d\vy\,d\vx.
  \end{align*}
  Analogously to the proof of Proposition \ref{prop:limsum}, we can
  thus conclude that
  \begin{align*}
    &
    \lim_{d\to 0} d \int_{\Sigma^d} \vE^d_T(\vx) \cdot \vF_T(\vx,\vx/d)\, \dox
    \\
    & = \alpha^0(0)\\
    & = \int_{\Omega}\int_{Y'} P(\vy')\vE^{(0)}(\vx,\vy',h(\vy')) \cdot
    P(\vy')\vF \left(\vx,\vy',h(\vy')\right)
    \sqrt{1+\left|\na h\left(\vy'\right)\right|^2} \,d\vy'\, d\vx
    \\
    &=
    \int_{\Omega}\int_{\Sigma_0} P(\vy)\vE^{(0)}(\vx,\vy)\cdot P(\vy)\vF
    \left(\vx,\vy\right)\,\doy\, d\vx.
  \end{align*}
  This completes the proof of Proposition \ref{prop:limsumgen}.
\end{proof}



\section*{Acknowledgments}
We wish to thank Professors Mitchell Luskin and Robert V. Kohn for useful
discussions. The first two authors (MM and DM) have been supported by ARO
MURI via Award No. W911NF-14-1-0247. The authors acknowledge partial
support by NSF through Awards DMS-1912847 (MM), DMS-1412769 (DM), and
DMS-1501067 (AM).


\bibliographystyle{abbrvnat}

\end{document}